\theoremstyle{remark}
\newtheorem*{rmk*}{Remark}
\theoremstyle{remark}
\newtheorem*{remark}{Remark}
\theoremstyle{definition}
\newtheorem{definition}{Definition}[section]
\newtheorem{theorem}{Theorem}[section]
\newtheorem{corollary}{Corollary}[section]
\newtheorem{lemma}[theorem]{Lemma}
\newtheorem{prop}{{\bf Proposition}}[section]
\renewcommand{\thefootnote}{\fnsymbol{footnote}}
\def\appendix#1{\addtocounter{section}{1}\setcounter{equation}{0}
\renewcommand{\thesection}{\Alph{section}}
\section*{Appendix \thesection\protect\indent \parbox[t]{11.15cm}{#1}}
\addcontentsline{toc}{section}{Appendix \thesection\ \ \ #1}}
\font\mybb=msbm10 at 11pt
\def\bb#1{\hbox{\mybb#1}}
\def\bZ {\bb{Z}}
\def\bR {\bb{R}}
\def\bC {\bb{C}}
\newcommand\R{{\mathrm {Ric}}}
\def\rsq {]\kern -2.0pt]}
\def\lsq {[\kern -2.0pt[}
\def\h{\widehat}
\def\be{\begin{equation}}
\def\ee{\end{equation}}
\def\rsq {]\kern -2.0pt]}
\def\lsq {[\kern -2.0pt[}
\newcommand{\bea}{\begin{eqnarray}}
\newcommand{\eea}{\end{eqnarray}}
\begin{document}

\begin{center}
\vspace*{-1.0cm}
\begin{flushright}
\end{flushright}

\vspace{2.0cm} {\Large \bf On the rigidity of special and exceptional geometries  with torsion a closed $3$-form } \\[.2cm]

\vskip 2cm
 Georgios  Papadopoulos
\\
\vskip .6cm

\begin{small}
\textit{Department of Mathematics
\\
King's College London
\\
Strand
\\
 London WC2R 2LS, UK}\\
\texttt{george.papadopoulos@kcl.ac.uk}
\end{small}
\\*[.6cm]

\end{center}

\vskip 2.5 cm

\begin{abstract}
\noindent

Under some suitable assumptions   Riemannian manifolds $(M, g, H)$ that admit a connection $\h\nabla$ with  torsion a 3-form $H$, which is both closed $d H=0$ and $\h\nabla$-covariantly constant,   are locally isometric to a product $N\times G$, where $G$ is a semisimple group and $N$ is a Riemannian manifold with $H\vert_N=0$.  If $M$ is simply connected and complete, then by the de Rham theorem $M=N\times G$ globally. We use this to simplify the proof of similar results  for  strong  CYT and HKT manifolds that obey the above hypotheses and extend them to strong $G_2$ and $\mathrm{Spin}(7)$ manifolds with torsion.  As an application, we describe the geometry  of all complete and simply connected   $G_2$ and $\mathrm{Spin}(7)$ manifolds that satisfy the above conditions.

  Compact, strong, 8-dimensional HKT manifolds, which are not hyper-K\"ahler, admit an either $\oplus^4 \mathfrak{u}(1)$ or a $\mathfrak{u}(1)\oplus \mathfrak{su}(2)$ locally free action, otherwise, they are group manifolds.  We find  that if these Lie algebra actions can be integrated to an appropriate free action of $T^4$ or $S(U(1)\times U(2))$ Lie groups that preserves the span of three complex structures,  then these HKT manifolds are  either locally isometric  and tri-holomorphic to  $\bR\times S^3\times B^4$  or    diffeomorphic to $SU(3)$, where  $B^4= \bR\times S^3$, $\bR^4$ or $K_3$.

\end{abstract}



\newpage

\renewcommand{\thefootnote}{\arabic{footnote}}



 \setcounter{section}{0}

\numberwithin{equation}{section}

\section{Introduction}\label{intro}

 Geometries $(M, g, H)$ with metric $g$ and equipped with a  connection\footnote{Schematically, $\h\nabla=\nabla+\frac{1}{2} g^{-1} H$, where $\nabla$ is the Levi-Civita connection of the metric $g$.}, $\h\nabla$, where $H$ is  the torsion   3-form of $\h\nabla$, have found widespread applications in mathematics and physics, see \cite{Curtrightzachos}- 
 \cite{FIUV} for some selected early works.  This is  especially the case for those that the holonomy of $\h\nabla$ reduces to a proper subgroup of the orthogonal group and as consequence the underlying manifold, $M$, exhibits some additional structure. Such geometries include manifolds\footnote{Different terminology has been used to describe these manifolds with torsion in the literature. For clarity, we shall use the early terminology proposed in \cite{phgp1}, where the geometry and  twistor spaces of HKT manifolds were also investigated.}  with a K\"ahler with torsion (KT), Calabi-Yau with torsion (CYT) and  hyper-K\"ahler with torsion (HKT) structure. In such a case, the holonomy of $\h\nabla$ is contained in $U(k)\subset SO(2k)$, $SU(k)\subset U(k)\subset  SO(2k)$ and $Sp(q)\subset SO(4q)$, respectively. Of particular interest is a subclass of these  geometries that obey the {\it strong} condition -- for these the torsion 3-form $H$ is {\it closed}, $dH=0$. Following the work of Hamilton \cite{Hamilton, Hamilton0} and  Perelman \cite{Perelman, Perelman0} on Ricci flows and the generalisation by  Oliynyk,  Suneeeta, and  Woolgar \cite{OSW2, OSW} to geometries with torsion a 3-form,  strong CYT and HKT manifolds  appear as solitons in the context of generalised Ricci flow, see also \cite{Tseytlin2, Huhu} for some applications. The regularity of generalised Ricci flow has been investigated by  Streets and Tian. The same authors  also presented  applications to strong CYT manifolds in \cite{ST1, ST2}.  For a  review on generalised Ricci flow,  see \cite{GFS} and references therein.  Recently, using generalised Ricci flow techniques, it has been demonstrated that all scale invariant sigma models with compact target space are conformally invariant \cite{gpew}. This has also been extended to the heterotic sigma models in \cite{gpheterotic}.

 Strong KT and HKT structures have also appeared  on the moduli space of instantons over Hermitian and HKT manifolds established by L\"ubke and Teleman \cite{lubke}  and Moraru and Verbitsky \cite{moraru}, respectively,  see also \cite{hitchin} for analogous results on the geometry of the moduli spaces of instantons over generalised Hermitian  or equivalently strong bi-KT manifolds.  Using these results and after exploring the symmetries of the moduli space of instantons over the HKT manifold $S^1\times S^3$,   Witten \cite{witten} provided evidence  that the dual conformal theory of a certain AdS$_3$/CFT$_2$ correspondence is a sigma model with target space the moduli space of instantons on $S^1\times S^3$. The geometry and symmetries of  moduli spaces of instantons and Hermitian-Einstein connections over manifolds with KT, bi-KT,  HKT and bi-HKT structures have been also explored in \cite{gp1}, where it was also established that the moduli space of  instantons over the HKT manifold $S^1\times S^3$ is isometric to a principal fibration over a Quaternionic manifold with torsion (QKT) with fibre group that has Lie algebra $\mathfrak{u}(1)\oplus \mathfrak{su}(2)$. The relation between HKT and QKT manifolds has been known for sometime in the context of homogeneous spaces \cite{op}. The QKT structure was introduced in \cite{hopqkt} and further explored in \cite{siqkt}.

 Despite the widespread applications, surprisingly, very few examples of compact strong CYT and HKT manifolds with non-vanishing torsion, $H\not=0$, are known.  Most of the examples constructed are (locally isometric to) products $N\times G$, where $N$ is a Calabi-Yau or hyper-K\"ahler manifold and $G$ is a group manifold with a CYT or HKT structure, respectively. For strong CYT manifolds with a $\h\nabla$-covariantly constant torsion $H$, $\h\nabla H=0$, where $\h\nabla$ is the connection with torsion $H$ a 3-form, a rigidity theorem has been demonstrated in \cite{qzfz, bfg, btp} confirming that all such manifolds are products $N\times G$, where $N$ is a Calabi-Yau manifold and $G$ is a group manifold. For compact 6-dimensional strong CYT, a rigidity result has been demonstrated by Apostolov, Barbaro, Lee and Streets \cite{abls}, where under some additional assumptions such manifolds were shown to be locally isometric and holomorphic to the following spaces: a Calabi-Yau manifold; $S^3\times S^3$;  $S^3\times \bR\times \bC$.

  In the strong HKT case, there are not known compact examples that are not locally isometric to a product manifold $N\times G$. The authors of \cite{bfgv} have demonstrated that all strong HKT manifolds with $\h\nabla$-covariantly constant torsion $H$ are locally isometric and tri-holomorphic to a product $N\times G$, where $N$ is a hyper-K\"ahler manifold and $G$ is a group manifold with an HKT structure.
 In the same publication, it has also been found  that solvmanifolds do not admit a strong HKT structure.  Because of this, the question  arises on whether there exist any compact  strong HKT manifold that is not such product.

 One of the objectives of this article is to generalise the above results to Riemannian manifolds that admit a connection $\h\nabla$ with torsion a 3-form $H$ such that $H$ is both closed and $\h\nabla$-covariantly constant. This will be used to revisit the proof given for KT and HKT manifolds explained above and furthermore extend the results to manifolds that admit a  connection $\h\nabla$ whose holonomy group is contained in a suitable exceptional group.  In particular, we  demonstrate the following statement.

\begin{theorem}\label{th:gdec}
Let $(M^n, g, H)$ be a connected Riemannian  manifold, $n\geq 3$,  with metric $g$ and 3-form $H$.  If $H$ is closed and $\h\nabla$-covariantly constant with respect to the metric connection with torsion $H$, $\h\nabla H=0$, then $M$ is locally isometric to $(N, g_N, 0)\times (S, g_S, H_S)\times (R, g_R, H_R)\times (Z, g_Z, H_Z)$ and $H$ decomposes as $H=H_S\oplus H_S\oplus H_R\oplus H_Z$. The individual components in the product are as follows:
\begin{enumerate}

\item $N$ is a (possibly reducible)  Riemannian manifold with $H\vert_N=0$.

 \item $ (S, g_S, H_S)=\times_i (\Sigma_i^3, g_{{}_{\Sigma_i}}, \mathrm{dvol}(\Sigma_i))$ is a product of (orientable) 3-dimensional manifolds $\Sigma_i^3$ with $H_S$ the  sum of the volume 3-forms $\mathrm{dvol}(\Sigma_i)$ up to a constant scale factor.

 \item $(R, g_R, H_R)=\times_\alpha (R_\alpha, g_{{}_{R_\alpha}}, H_{R_\alpha})$, where  $R_\alpha$ is a reducible Riemannian manifold but irreducible as a manifold with torsion $H_{R_\alpha}$.

  \item
    $(Z, g_Z, H_Z)$ is the product of symmetric spaces $\times_\ell (Z_\ell, g_{{}_{Z_\ell}}, H_{Z_\ell})$, where $Z_\ell$ is either   a compact simply connected Lie group $K_\ell$ equipped with the bi-invariant metric and $3$-form or its dual non-compact symmetric space, $\mathrm{dim}\, K_\ell\geq 5$.
    \end{enumerate}
    Furthermore, if $M$ is also simply connected and complete, then $(M^n, g, H)=(N, g_N, 0)\times (S, g_S, H_S)\times (R, g_R, H_R)\times (Z, g_Z, H_Z)$ globally.
\end{theorem}

This statement is a collection of results obtained by Agricola, Ferreira, and Friedrich in \cite{AFF}. Here, we shall outline some of the steps in the proof of the above statement. This will be used later as
the results of the above theorem can be adapted to strong $G_2$ and $\mathrm{Spin}(7)$ manifolds, $(M^7, g, H, \varphi)$ and $(M^8, g, H, \phi)$, respectively,  with torsion a $\h\nabla$-covariantly constant 3-form $H$.   The geometries of such manifolds are described in {\bf Theorems} \ref{cor:g2} and \ref{cor:spin7}, respectively.
It is found that in all cases, such manifolds are locally isometric to either a group manifold or to a product of a hyper-K\"ahler manifold with a group manifold. We provide a list of all cases that can occur.
We also give the classification of all complete, simply connected strong $G_2$ and $\mathrm{Spin}(7)$ manifolds that satisfy the hypotheses of Theorem \ref{th:gdec}, see also the remarks that accompany the Theorems \ref{cor:g2} and \ref{cor:spin7} below.

The result in Theorem \ref{th:gdec} can be used to simplify similar statements that have already been made in the literature for strong KT, CYT and HKT manifolds,  \cite{qzfz, bfg, btp, bfgv}. The proof  for
 strong KT, strong CYT and strong HKT manifolds has the additional difficulty of demonstrating the splitting of the complex structure(s). The results   are described in the {\bf Theorems} \ref{th:KTCYT} and \ref{th:HKT} for completeness.

The outcome of all these results is that requiring $H$ to be  $\h\nabla$-covariantly constant is rather restrictive to give examples of strong KT, CYT and HKT manifolds that are not products of group manifolds with other spaces. We investigate  the question on whether the $\h\nabla$-covariantly constant requirement can be replaced with a weaker condition, like for example that the (pointwise) length, $|H|$,  of $H$ is constant. Under such an assumption and for $(M,g,H)$ a compact steady generalised Ricci soliton, an application of the formula stated in Proposition \ref{prop:gsrs} and  proven in  \cite{gpew}, implies that $H$ is harmonic. We explore this and point out that for a positive definite Riemann curvature, $H$ is $\nabla$-covariantly constant. This and some additional properties are  described in Proposition \ref{prop:Hdec}.

It has been known for sometime that compact conformally balanced strong HKT (CYT) manifolds are hyper-K\"ahler (Calabi-Yau) \cite{sigp1, sigp}. (The Lee 1-form of a conformally balanced Hermitian manifolds is exact, see e.g. \cite{gpew} for a recent explanation and definitions.) On the other hand,
compact, non-conformally balanced (i.e. non-hyper-K\"ahler), strong HKT manifolds $(M^{4q}, g, H, I_r)$,  whose  a priori (reduced) holonomy group of $\h \nabla$ connection is not a proper subgroup of $Sp(q-1)$,  admit an action of the $\oplus^4\mathfrak{u}(1)$ or the $\mathfrak{u}(1)\oplus \mathfrak{su}(2)$ Lie algebra \cite{gpheterotic}.   These Lie algebras are generated by $\h\nabla$-covariantly constant vector fields and so their action is locally free, i.e. they do not have fixed points.  If these Lie algebra actions have closed orbits, they can be integrated to a group action of  $T^4$ or $S(U(1)\times U(2))$ on $M^{4q}$. In general, the space of orbits of a such group action is an orbifold.  However, if the group action is free and preserves the span of the three complex structures $I_r$ of  $M^{4q}$,  then $(M^{4q}, g, H, I_r)$ is isometric and tri-holomorphic to principal $T^4$ or $S(U(1)\times U(2))$ bundle over an HKT or QKT manifold $B^{4(q-1)}$, respectively -- the $S(U(1)\times U(2))$ is isomorphic to $U(2)$ but it will become obvious below why considering $S(U(1)\times U(2))$ is preferential.

Recently, Brienza, Fino, Grantcharov and Verbitsky \cite{bfgv} have demonstrated that compact, simply connected, strong, 8-dimensional HKT manifolds $M^8$, which are not hyper-K\"ahler, admit the action of a $\mathfrak{u}(1)\oplus \mathfrak{su}(2)$  algebra. Thus, the holonomy requirement on the connection $\h\nabla$ imposed in \cite{gpheterotic} for the existence of such an action, which was also mentioned above,  can be removed in this case.  Moreover, if the orbits of the  $\mathfrak{u}(1)\oplus\mathfrak{su}(2)$ action are closed, then $M^8$ is  a  fibration with base space a QKT orbifold. In addition, they demonstrated that such manifolds always admit another  $\mathfrak{u}(1)\oplus \mathfrak{su}(2)$ action with closed orbits. But, the space of orbits of this new action may not admit a QKT structure.

It is also a consequence  of the results of \cite{gpheterotic} that compact, strong, 8-dimensional, HKT manifolds  either admit an action of the $\oplus^4\mathfrak{u}(1)$ or the $\mathfrak{u}(1)\oplus \mathfrak{su}(2)$ Lie algebra, otherwise, they are   group manifolds. To see the latter,  they always admit four $\h\nabla$-covariantly constant vector fields.  If the Lie algebra of these four vector fields does not close under Lie brackets, then it will generate another four $\h\nabla$-covariantly constant vector fields, which makes the 8-dimensional HKT manifold parallelizable with respect to the connection $\h\nabla$. Such manifolds are  group manifolds equipped with a bi-invariant metric and 3-form torsion.

Because of the large amount of symmetry, the possibility arises that these  8-dimensional compact HKT manifolds can be classified.
Indeed, we have demonstrated the following result:

\begin{theorem}\label{th:cla}
Let $(M^8, g, H, I_r)$  be a compact, strong, 8-dimensional,  non-hyper-K\"ahler,  HKT manifold  that admits a $\oplus^4\mathfrak{u}(1)$ or a $\mathfrak{u}(1)\oplus \mathfrak{su}(2)$ action.   If this can be integrated to a  free  action of $T^4$ or $S(U(1)\times U(2))$ Lie groups on $M^8$ that preserves the span of three complex structures $I_r$ and the space of orbits of $S(U(1)\times U(2))$ group action is simply connected,  then  $(M^8, g, H, I_r)$ is either locally isometric and tri-holomorphic to a product $\bR\times S^3 \times B^4$ for $B^4=\bR\times S^3$, $\bR^4, K_3$ or diffeomeorphic to $SU(3)$. The group manifold $SU(3)$ equipped with the bi-invariant metric and 3-form admits HKT structures.
\end{theorem}

From the assumptions of the theorem, $M^8$ is the total space of a principal bundle with fibre either $T^4$ or $S(U(1)\times U(2))$. The above theorem  can also be extended whenever the Lie algebra $\mathfrak{u}(1)\oplus \mathfrak{su}(2)$ action can be integrated  to $U(2)$ -- $S(U(1)\times U(2))=U(2)$ as group manifolds.
The proof of  Theorem \ref{th:cla} relies on four intermediate results.

The first step is the proof of the statement that the base space $B^4$ of such fibration, $M^8(B^4, S(U(1)\times U(2))$, is a {\it anti-self-dual} 4-manifold $B^4$ with {\it positive scalar curvature}, see also \cite{jggpa, jggpb}. The second result is that the {\it adjoint bundle of} $M^8$ is identified with the {\it bundle of self-dual 2-forms} on $B^4$, $\Lambda^+(B^4)$ spanned by the quaternionic structure of $B^4$. Principal bundles with fibre group $S(U(1)\times SU(2))$ over  4-manifolds are characterised by the Chern classes $(c_1, c_2)\in H^2(B^4,\bZ)\oplus H^4(B^4, \bZ)$. The third  point is that the closure of $H$ imposes the {\it topological condition}
\be
3c_1^2+2\chi+3\tau=0~,
\label{topcon}
\ee
on $B^4$ and $M^8$, where in addition  $c^2_1[B^4]\leq 0$, and $\chi$ and $\tau$ are the E\"uler and signature characteristic classes of $B^4$, respectively. Finally, the fourth result is the classification by LeBrun \cite{lebrun0}, see also \cite{lebrun},  of the topological type of compact simply connected anti-self-dual  4-manifolds that admit a metric with positive scalar curvature. They found that such manifolds  are homeomorphic to  connected sums of the complex projective space $\overline{\bC P}^2$, $\#_k \overline{\bC P}^2$ and $S^4$ -- the orientation of $B^4$ is chosen to be that of the quaternionic structure  and it is opposite from that of \cite{lebrun0}. The proof of the Theorem \ref{th:cla} follows upon comparing the topological condition arising from the closure of $H$ above with the topological data of $\#_k \overline{\bC P}^2$ and $S^4$.

This paper is organised as follows: In section 2, we consider a Riemannian manifold, $(M, g, H)$, with metric $g$ and equipped with a metric connection $\h\nabla$ with torsion the 3-form $H$ -- $H$ is restricted to be a {\it closed} 3-form. Under this assumption and after deriving a  key Bianchi identity described in Lemma \ref{le:bianchi},  we explore in Lemma \ref{le:lccc} the consequences of imposing that the torsion $H$ is $\h\nabla$-covariantly constant on the geometry of a Riemannian manifold. We find that $H$ must be {\it $\nabla$-covariantly constant}  and that $H$ obeys the {\it Jacobi identity}, where $\nabla$ is the Levi-Civita connection of $g$. Then, the proof of decomposition Theorem \ref{th:gdec} follows. The applications of this result to strong KT and CYT, and strong HKT manifolds are explained in Theorems \ref{th:KTCYT} and \ref{th:HKT}, respectively, reproducing the results of \cite{bfg, btp, bfgv}.

In section 3, we extend our results to strong $G_2$ and $\mathrm{Spin}(7)$ manifolds with torsion $H\not=0$,  which, in addition,  is  $\h\nabla$-covariantly constant.  We find that these spaces are either locally isometric to a group manifold or to a product of a group manifold with a suitable hyper-K\"ahler space.

In section 4, we evaluate the results that have been obtained so far in the literature and in this paper to argue that the condition of imposing that $H$ is $\h\nabla$-covariantly constant for CYT, HKT, and $G_2$ and $\mathrm{Spin}(7)$ with torsion manifolds is too strong to produce compact examples that are not locally isometric to products of groups and manifolds that have vanishing torsion. It also appears that imposing a weaker condition, like for example that the length of $H$  is constant over the manifold, may again be very restrictive. For example, if $H^2$ is constant, then even for Riemannian manifolds with torsion, i.e. without any additional structure, the identity in Proposition \ref{prop:gsrs} together with the Bochner-Weitzenb\"ock formula will imply that $H$ is $\nabla$-covariantly constant. This indicates that such a weaker statement may not be sufficiently weak enough to construct non-trivial examples.

In section 5, we demonstrate the proof of Theorem \ref{th:cla}. We begin with the proof of the four statements needed to establish the result. We also explore further the HKT structures on $SU(3)$. Although, $SU(3)$ admits left- and right-invariant HKT structures as a group manifold. The question remains whether these are the only ones. We point out in corollary \ref{coro:one} that if there is an additional HKT structure on $SU(3)$, it is induced by a anti-self-dual structure on $\overline{\bC P}^2$ that lies in the conformal class of the Fubini-Study metric.

In appendix A, we summarise our conventions and in appendix B, we demonstrate that a certain non-linear elliptic differential equation that arises in the investigation of 8-dimensional HKT manifolds has always a solution. The proof is based on the use of supersolutions and subsolutions and relies on an iteration procedure.

\section{Manifolds with parallel 3-form torsion}

\subsection{Riemannian manifolds with parallel 3-form torsion}

Before, we proceed to prove the result described in Theorem \ref{th:gdec}, let us state the proof of two Bianchi identities associated with the curvature $\h R$ of the connection $\h\nabla$ with torsion $H$ on the Riemannian manifold $(M^n, g, H)$. The proof can be given in global notation but it is considerably simpler to state it on an orthonormal co-frame $\{e^i: i=1, \dots, n\}$,  where  $g=\delta_{ij} e^i e^j$. See also appendix A for our conventions.

\begin{lemma}\label{le:bianchi}

Let $(M, g, H)$ be a Riemannian manifold equipped with metric $g$ and a 3-form $H$ -- $H$ is not necessarily closed, $dH\not=0$. Then, the curvature $\h R$ of the connection $\h\nabla$ with torsion $H$ satisfies the Bianchi identities

\begin{align}
3 \h R_{i[j km]}&=-\h\nabla_i H_{jkm}+ \frac{1}{2} dH_{ijkm}~,
\cr
3 \h R_{[ij k]m}&=-\frac{3}{2} \h\nabla_{[i} H_{jk]m}-\frac{1}{2} \h\nabla_m H_{ijk}-\frac{1}{2} dH_{ijkm}~.
\label{bianchi}
\end{align}

\end{lemma}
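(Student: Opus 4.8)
The plan is to work in the orthonormal co-frame $\{e^i\}$ and derive both identities from the definition of the curvature of $\h\nabla$ together with the first Bianchi identity for a connection with torsion. Recall that for a metric connection $\h\nabla = \nabla + \tfrac12 H$ (with $H$ a $3$-form, so that the torsion tensor is $\h T_{ijk} = H_{ijk}$ totally antisymmetric), the general first Bianchi identity reads $\sum_{\text{cyc}(ijk)} \h R_{ijkm} = \sum_{\text{cyc}(ijk)}\big( \h\nabla_i \h T_{jkm} + \h T_{i j l}\, \h T_{l k m}\big)$, up to conventions on the sign and normalisation of curvature fixed in appendix \ref{ap:a}. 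The strategy is therefore: (i) write out this cyclic sum explicitly with $\h T = H$; (ii) re-express the quadratic term $H_{ij l} H_{lkm}$ and its cyclic images in terms of $dH$ using the identity $dH_{ijkm} = 4\,\nabla_{[i}H_{jkm]}$ and the relation $\h\nabla_i H_{jkm} = \nabla_i H_{jkm} - \tfrac32 H_{[ij}{}^{l} H_{k m] l}$ (which converts $\h\nabla$-derivatives of $H$ into $\nabla$-derivatives plus an $HH$ term); (iii) collect terms to isolate the two independent Young-symmetry projections of $\h R$ — the totally antisymmetric part $\h R_{i[jkm]}$ and the part $\h R_{[ijk]m}$ — obtaining the two displayed formulas.

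Concretely, for the first identity I would antisymmetrise the curvature in all four indices. Since $\h R_{ijkm}$ has the pair symmetry only up to torsion corrections, the totally antisymmetric part picks up a $\h\nabla H$ term from the Bianchi sum and a residual $HH$ term; the latter is precisely what combines with the $\nabla H$ hidden inside $\h\nabla H$ to reconstruct $\tfrac12 dH_{ijkm}$. Bookkeeping the combinatorial factor $3$ on the left comes from the fact that antisymmetrising $\h R_{ijkm}$ over $[ijkm]$ while the Bianchi identity only cycles three of the indices produces a factor counting the number of ways the "spectator" index $m$ can be chosen. For the second identity, instead of full antisymmetrisation one keeps $m$ as a genuine spectator and only antisymmetrises over $[ijk]$; then the Bianchi sum directly gives the $\tfrac32 \h\nabla_{[i}H_{jk]m}$ term, and the remaining pieces — the $\h\nabla_m H_{ijk}$ term and the $dH$ term — arise from commuting $\h\nabla$ past the antisymmetrisation and from the $HH$ quadratic contributions, again using $dH = 4\nabla_{[\cdot}H_{\cdot\cdot\cdot]}$.

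The main obstacle will be purely organisational rather than conceptual: keeping the normalisation constants ($\tfrac12$, $\tfrac32$, the factor $3$, and the sign of $dH_{ijkm}$) consistent with the conventions fixed in appendix \ref{ap:a}, and correctly tracking how the $HH$ terms coming from (a) the torsion-quadratic part of the first Bianchi identity and (b) the difference $\h\nabla H - \nabla H$ conspire to assemble into $\pm\tfrac12 dH_{ijkm}$. A clean way to avoid errors is to first prove the identities in the special case $dH = 0$ (where they reduce to $3\h R_{i[jkm]} = -\h\nabla_i H_{jkm}$ and $3\h R_{[ijk]m} = -\tfrac32 \h\nabla_{[i}H_{jk]m} - \tfrac12 \h\nabla_m H_{ijk}$, which are standard for strong-torsion connections), and then reinstate the $dH$ contributions by noting that replacing $dH=0$ by $dH\neq 0$ only modifies $\nabla_{[i}H_{jkm]}$ by $\tfrac14 dH_{ijkm}$, which feeds linearly into both right-hand sides with the stated coefficients.
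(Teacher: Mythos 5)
Your route to the second identity is sound and is essentially the paper's computation in different packaging: the paper starts from the decomposition $\h R_{ijkm}=R_{ijkm}+\tfrac12\nabla_iH_{kjm}-\tfrac12\nabla_jH_{kim}+\tfrac14 H^p{}_{ki}H_{pjm}-\tfrac14 H^p{}_{kj}H_{pim}$, antisymmetrises over $[ijk]$, kills the Levi-Civita part with $R_{[ijk]m}=0$, and then trades $\nabla H$ for $\h\nabla H$ so that the quadratic $HH$ terms cancel; starting instead from the torsionful first Bianchi identity $\h R_{[ijk]m}=\h\nabla_{[i}H_{jk]m}+\ldots$ and converting is equivalent bookkeeping. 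Your conversion formula $\h\nabla_iH_{jkm}=\nabla_iH_{jkm}-\tfrac32 H_{[ij}{}^lH_{km]l}$ and the normalisation $dH_{ijkm}=4\nabla_{[i}H_{jkm]}$ are both consistent with the conventions of appendix \ref{ap:a}, so this half of the argument goes through.

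The proposed derivation of the \emph{first} identity, however, has a genuine gap. Antisymmetrising $\h R_{ijkm}$ over all four indices produces only the totally antisymmetric projection $\h R_{[ijkm]}$, whereas the first identity, $3\h R_{i[jkm]}=-\h\nabla_iH_{jkm}+\tfrac12 dH_{ijkm}$, keeps $i$ as a genuinely free index; its right-hand side is not totally antisymmetric (the term $\h\nabla_iH_{jkm}$ is not), so the statement cannot be recovered from its own antisymmetrisation over $i$. Your remark about "counting the ways the spectator index can be chosen" does not repair this: the factor $3$ on the left is merely the normalisation of the antisymmetrisation over the three indices $j,k,m$. To actually prove the first identity you need one of two additional inputs: either (a) repeat the direct computation, this time antisymmetrising the decomposition of $\h R_{ijkm}$ over the \emph{last} three indices and using the Levi-Civita identity $R_{i[jkm]}=0$ (which itself encodes the pair symmetry $R_{ijkm}=R_{kmij}$), or (b) first derive the torsion-corrected pair-exchange formula $\h R_{ijkm}-\h R_{kmij}=\ldots$ by combining the four spectator-index versions of the first Bianchi identity, and then feed it back into $3\h R_{i[jkm]}=\h R_{ijkm}+\h R_{ikmj}+\h R_{imjk}$. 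Either of these closes the argument; as written, your step proves only the weaker statement about $\h R_{[ijkm]}$. (The paper sidesteps this by citing the first identity as standard and proving only the second.)
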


\begin{proof}
The first identity is a very well known and extensively used in the literature, see e.g. \cite{gpheterotic}. Because of this, we shall not proceed with a proof. Instead, let us focus on the proof of the second Bianchi identity.

First note that
\be
\h R_{ijkm}= R_{ijkm}+\frac{1}{2} \nabla_i H_{kjm}- \frac{1}{2} \nabla_j H_{kim}+\frac{1}{4} \delta_{pq} H^p{}_{ki} H^q{}_{jm}-\frac{1}{4} \delta_{pq} H^p{}_{kj} H^q{}_{im}~,
\ee
where $\nabla$ and $R$ is the Levi-Civita connection of $g$ and $R$ is its curvature, respectively.  Next taking the cyclic permutation of the first three indices in the above expression, we find
\begin{align}
3 \h R_{[ijk]m}&=\frac{3}{2} \nabla_{[i} H_{kj]m}-\frac{3}{2} \nabla_{[j} H_{ki]m}+\frac{3}{2} \delta_{pq} H^p{}_{[ki} H^q{}_{j]m}
\cr
&=\frac{3}{2} \nabla_{[i} H_{kj]m}-\frac{1}{2} \nabla_{m} H_{jki}+\frac{3}{2} \delta_{pq} H^p{}_{[ki} H^q{}_{j]m}-\frac{1}{2} dH_{jkim}
\cr
&
=\Big(\frac{3}{2} \h\nabla_{[i} H_{kj]m}+\frac{3}{2} \delta_{pq} H^p{}_{[ik} H^q{}_{j]m}-\frac{3}{4} \delta_{pq} H^p{}_{m[i} H^q{}_{kj]}\Big)
\cr
&
\qquad-\big(\frac{1}{2} \h\nabla_{m} H_{jki}+\frac{3}{4} \delta_{pq} H^p{}_{m[j} H^q{}_{ki]}\Big)
\cr
&\qquad+\frac{3}{2} \delta_{pq} H^p{}_{[ki} H^q{}_{j]m}-\frac{1}{2} dH_{jkim}
\cr
&
=\frac{3}{2} \h\nabla_{[i} H_{kj]m}-\frac{1}{2} \h\nabla_{m} H_{jki}-\frac{1}{2} dH_{jkim}~,
 \end{align}
 where we have used the Bianchi identity of the curvature of the Levi-Civita connection. From this, the expression of the second Bianchi identity as stated in the Lemma can be deduced after a re-arrangement of indices.
\end{proof}
\begin{remark}
 One significant point is that the right hand side  of all the above Bianchi identities  depends only on the exterior derivative  and the $\h\nabla$ covariant derivative of $H$.

The above Bianchi identities hold for the curvature $\breve R$ of the connection $\breve \nabla$ that has torsion $-H$.  The formulae can be easily deduced from those of Lemma \ref{le:bianchi} by setting $H$ to $-H$.  In particular, the first Bianchi identity reads
\be
3 \breve R_{i[j km]}=\breve\nabla_i H_{jkm}- \frac{1}{2} dH_{ijkm}~.
\ee
Notice that the covariant derivative in the right hand side is now taken with respect to the connection $\breve \nabla$.
\end{remark}

\begin{lemma}\label{le:lccc}

Let $(M, g, H)$ be a Riemannian manifold as in the previous lemma. If $H$ is closed, $d H=0$,  and $\h\nabla$-covariantly constant, $\h\nabla H=0$, then $H$ is covariantly constant with respect to the Levi-Civita connection, $\nabla H=0$, and $H$ satisfies the Jacobi identity.

\end{lemma}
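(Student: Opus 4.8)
The plan is to exploit the two Bianchi identities from Lemma~\ref{le:bianchi} together with the hypotheses $dH=0$ and $\h\nabla H=0$. First I would observe that the right-hand sides of both identities in \eqref{bianchi} involve only $dH$ and $\h\nabla H$, so under our hypotheses they vanish identically. This immediately gives $\h R_{i[jkm]}=0$ and $\h R_{[ijk]m}=0$; that is, the curvature of $\h\nabla$ is totally antisymmetric in any three of its indices being zero, which combined with the usual pair symmetry of $\h R$ (valid when the torsion is a $3$-form and $H$ is $\h\nabla$-parallel, so that $\h R_{ijkm}=\h R_{kmij}$) forces $\h R$ to enjoy the algebraic symmetries of a Levi-Civita curvature tensor — in particular the first Bianchi identity $\h R_{i[jkm]}=0$ holds.

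Next I would feed this back into the decomposition $\h R_{ijkm}= R_{ijkm}+\tfrac12 \nabla_i H_{kjm}- \tfrac12 \nabla_j H_{kim}+\tfrac14 \delta_{pq} H^p{}_{ki} H^q{}_{jm}-\tfrac14 \delta_{pq} H^p{}_{kj} H^q{}_{im}$, which was written out in the proof of Lemma~\ref{le:bianchi}. Since $\h\nabla H=0$ means $\nabla_i H_{jkm}= -\tfrac32 \delta_{pq}H^p{}_{i[j}H^q{}_{km]}$ (writing the $\h\nabla$-derivative in terms of $\nabla$ and the contorsion), the quantity $\nabla H$ is already algebraically determined by $H$ itself. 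The key step is then to show $\nabla H = 0$. For this I would contract the identity $\h R_{i[jkm]}=0$ (or equivalently use both vanishing cyclic identities) to isolate $\h\nabla_i H_{jkm} - \tfrac12 dH_{ijkm}=0$, which is just the first line of \eqref{bianchi}; but since we want $\nabla H$ rather than $\h\nabla H$, I would instead use the relation between $\h\nabla H$ and $\nabla H$: $\h\nabla_i H_{jkm}=\nabla_i H_{jkm}+\tfrac32 \delta_{pq}H^p{}_{i[j}H^q{}_{km]}$, so $\h\nabla H=0$ gives $\nabla_i H_{jkm}=-\tfrac32 \delta_{pq}H^p{}_{i[j}H^q{}_{km]}$. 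The right-hand side is totally antisymmetric in $i,j,k,m$ only if a Jacobi-type identity holds; conversely, pairing $\nabla_i H_{jkm}$ against itself and using that $\nabla_i H_{jkm}$ must also be skew in $jkm$ (it is a derivative of a $3$-form, so already skew there) while the expression $-\tfrac32 \delta_{pq}H^p{}_{i[j}H^q{}_{km]}$ is manifestly skew in $jkm$, I would compute $|\nabla H|^2 = \nabla_i H_{jkm}\nabla^i H^{jkm}$ and show via the algebraic substitution and the symmetry $\h R_{ijkm}=\h R_{kmij}$ that it equals $-|\nabla H|^2$ or otherwise a non-positive multiple of itself, forcing $\nabla H=0$. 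Equivalently, and perhaps cleaner: once we know $\nabla_i H_{jkm}$ equals a totally symmetric-in-$i$-vs-rest-free expression, antisymmetrising over all four indices and comparing with the fact that $d H = 4\nabla_{[i}H_{jkm]}=0$ yields that the quadratic term is totally antisymmetric, i.e. $\delta_{pq}H^p{}_{[ij}H^q{}_{km]}=0$, which is precisely the Jacobi identity $\iota_{[X}H\wedge\iota_Y H\,(\cdot)$ vanishing; plugging Jacobi back in then makes $\nabla_i H_{jkm}$ both skew in $i$ against $jkm$ and skew within $jkm$, hence totally skew, hence equal to $\tfrac14 dH_{ijkm}=0$.

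I expect the main obstacle to be the bookkeeping that establishes the pair symmetry $\h R_{ijkm}=\h R_{kmij}$ and then correctly combines it with the two cyclic-vanishing identities to conclude both the first Bianchi identity for $\h R$ and the vanishing $\nabla H=0$; this is where sign errors and index-antisymmetrisation conventions are most likely to bite, and where one must be careful that "totally antisymmetric in three indices vanishes" together with "pair-symmetric" genuinely implies the full algebraic Bianchi identity. The Jacobi identity for $H$ should then fall out essentially for free: from $\nabla H=0$ and the contorsion formula $\h\nabla_i H_{jkm}=\nabla_i H_{jkm}-\tfrac32\delta_{pq}H^p{}_{i[j}H^q{}_{km]}$ (with the appropriate sign), setting both $\nabla H$ and $\h\nabla H$ to zero forces $\delta_{pq}H^p{}_{i[j}H^q{}_{km]}=0$, i.e. $\iota_X\iota_Y\iota_Z(H\wedge H)$-type contractions vanish, which is exactly the statement that the bracket $[X,Y]_H$ defined by $g([X,Y]_H,Z)=H(X,Y,Z)$ satisfies Jacobi. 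I would close by remarking that $\nabla H=0$ in particular reproduces, via $\h\nabla = \nabla + \tfrac12 H$, the fact that $H$ is also $\breve\nabla$-parallel, which is the input needed for the product decomposition in Theorem~\ref{th:gdec}.
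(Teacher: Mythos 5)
Your proposal is correct, but the argument that actually closes it --- the ``equivalently, and perhaps cleaner'' route at the end of your second paragraph --- is genuinely different from the paper's proof. The paper works entirely at the level of curvature: from $dH=0$ it invokes the pair symmetry $\h R_{ijkm}=\breve R_{kmij}$ relating the curvatures of the connections with torsion $\pm H$, uses the vanishing of both cyclic Bianchi identities to deduce $\breve\nabla H=0$, and then averages the two connections to get $\nabla H=0$ and reads off Jacobi from their difference. Your decisive argument bypasses curvature altogether: $\h\nabla H=0$ expresses $\nabla_i H_{jkm}$ as a fixed multiple of the Jacobi tensor $J_{ijkm}=\delta_{pq}H^p{}_{ij}H^q{}_{km}+\mathrm{cyclic}(j,k,m)$, and since $J$ is \emph{already} totally antisymmetric in all four indices (it is, up to a factor, the component expression of the $4$-form $\sum_p \iota_{e_p}H\wedge\iota_{e_p}H$), the condition $dH_{ijkm}=4\nabla_{[i}H_{jkm]}=0$ forces $J=0$ and hence $\nabla H=0$ simultaneously. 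This is more elementary than the paper's route and makes transparent that the whole content of the lemma is the total antisymmetry of $J$; the paper's route, by contrast, produces the useful byproduct $\breve\nabla H=0$ and the curvature symmetries used later.

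Two points need repair before this is watertight. First, your claimed pair symmetry $\h R_{ijkm}=\h R_{kmij}$ ``valid when $H$ is $\h\nabla$-parallel'' is not available a priori: what $dH=0$ gives is $\h R_{ijkm}=\breve R_{kmij}$, and the difference $\h R-\breve R$ is proportional to $\nabla_{[i}H_{j]km}$, i.e.\ to the very quantity you are trying to kill --- so invoking $\h R_{ijkm}=\h R_{kmij}$ at that stage is circular. (It can be rescued: antisymmetry in each index pair together with $\h R_{[ijk]m}=0$ implies pair symmetry by the standard algebraic lemma, but that is not the justification you gave, and in any case your first route never completes the step from these symmetries to $\nabla H=0$.) Second, in the cleaner route you pass from ``the total antisymmetrisation $\delta_{pq}H^p{}_{[ij}H^q{}_{km]}$ vanishes'' to ``the Jacobi identity holds'' without proof; these coincide only because $J_{ijkm}$ is already totally skew, which is exactly the $2$-form-wedge observation above and should be stated and checked explicitly, since without it the vanishing of the antisymmetrised part says nothing about $J$ itself.
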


\begin{proof}

It is known that if $H$ is closed, then
\be
\h R_{ijkm}=\breve R_{kmij}~.
\label{bianchi3}
\ee
As $H$ is closed and $\h\nabla$-covariantly constant, a consequence of the Bianchi identities of Lemma \ref{le:bianchi} is that
\be
\h R_{[ijk]m}=\h R_{i[jkm]}=0~.
\ee
However, using the remark above and the closure of $H$
\be
\breve\nabla_i H_{jkm}=3 \breve R_{i[j km]}=-3 \h R_{[km j]i}=0~,
\ee
where we have used the identity (\ref{bianchi3}).  Therefore, $H$ is also $\breve \nabla$-covariantly constant. As $H$ is covariantly constant with respect to both $\h\nabla$ and $\breve\nabla$, it is clearly covariantly constant with respect to the Levi-Civita connection $\nabla$ and satisfies the Jacobi identity
\be
H^p{}_{ij} H_{pkm}+\mathrm{cyclic}~ (j,k,m)=0~.
\label{jac}
\ee
This completes the proof of the Lemma.
\end{proof}

\begin{remark}
Before, we proceed with the proof of Theorem \ref{th:gdec}, notice that all oriented 3-dimensional manifolds $(\Sigma^3, g_{{}_{\Sigma^3}}, \mathrm{dvol}(\Sigma^3))$, where $\mathrm{dvol}(\Sigma^3)$ is the volume 3-form, satisfy all the requirements of the theorem.  The same applies for products of such manifolds.  Such manifolds  contribute to the $S$ subspace in the decomposition of $M$ in the Theorem \ref{th:gdec}.

Furthermore, if $(M^4, g, H)$ is an oriented 4-dimensional manifold that satisfies the requirements of the Theorem \ref{th:gdec}, then it is locally isometric to $\bR\times \Sigma^3$, with $H$ identified with  $\mathrm{dvol}(\Sigma^3)$ up to rescaling with a constant. To see this, observe that the Hodge dual  $1$-form $Y$ of $H$, $Y={}^*H$,  is $\nabla$-covariantly constant. So $M^4$ locally isometrically decomposes to $\bR\times \Sigma^3$. Moreover in four dimensions, $\iota_{Y^\flat} H=0$.  As $dH=0$, one also has $\mathcal{L}_{Y^\flat} H=0$. Thus, $H$ is a $3$-form on $\Sigma^3$ and so it is a constant multiple of the volume form.
\end{remark}

\begin{remark}
The decomposition of $(M^n, g, H)$ can contain reducible Riemannian manifolds that are irreducible as manifolds with torsion because $H$ does not accordingly decompose. Examples of such manifolds are simple Lie algebras with a bi-invariant metric, which are clearly decomposable as Riemannian manifolds because they are vector spaces equipped with the flat metric, but the structure constants that are identified with $H$ are incompatible with such a decomposition.  Such spaces contribute in the $R$ subspace of the decomposition of $(M^n, g, H)$ described in Theorem \ref{th:gdec}.
\end{remark}

Suppose that the torsion $3$-form $H$ of  $(M^n, g, H)$ is closed and $\h\nabla$-covariantly constant. Choose a point $p\in M^n$ and define the Lie algebra
\be
\mathfrak{g}_T\equiv \{\iota_V H\vert V\in T_pM^n\}\subseteq \mathfrak{so}(n)=\Lambda^2(T_pM)~.
\ee
The vector space $\mathfrak{g}_T$ is a Lie algebra because $H$ satisfies the Jacobi identity at every point $p\in M^n$. In fact $(T_pM^n, \mathfrak{g}_T, H_p)$ define a skew holonomy system \cite{simons, olmos1, olmos2}.

\begin{lemma}
Suppose that the $3$-form $H$ of $(M^n, g, H)$ is closed and $\h\nabla$-covariantly constant. Let $\oplus^q_{a=1}\mathcal{T}_a$ be the decomposition of tangent space  $TM^n$ of $M^n$ in parallel $\h\nabla$-distributions. Then, all  $\mathcal{T}_a$ are integrable $\nabla$-parallel distributions and $(M^n, g, H)$ locally decomposes as $\times_{a=1}^q (W_a, g_a, H_a)$.  Moreover, if $M^n$ is complete and simply connected, then $(M^n, g, H)=\times_{a=1}^q (W_a, g_a, H_a)$ globally.
\end{lemma}
\begin{proof}
A brief outline of a proof, which can be found in \cite{AFF},  is as follows: As $H$ satisfies the Jacobi identity, it decomposes as $H=\oplus_{a=1}^q H_a$ with $H_a$ a section in  $\Lambda^3(\mathcal {T}_a)$. Moreover, as $H$  is $\nabla$-covariantly constant, this decomposition commutes with the Riemann curvature $R$ of $M^n$ and therefore as a consequence of the de Rham theorem locally $(M^n, g)$ is isometric to $\times_{a=1}^q (W_a, g_a)$ with $\mathcal{T}_a=TW_a$.  Furthermore, using the closure of $H$, one can show that $H_a$ has support on $W_a$. Thus, $(M, g, H)$ locally isometric to
 \be
 \times_{a=1}^q (W_a, g_a, H_a)~.
 \label{dcomp}
 \ee
 If $M^n$ is complete and simply connected, then $(M^n, g, H)$ decomposes as above globally.
 \end{proof}

 \begin{remark}
 Suppose that $(M^n,g,H)$ satisfies the assumptions of the Lemma above. Define the quadratic form
\be
h(X, Y)=(\iota_X H, \iota_Y H)~,~~~  h_{ij}=\frac{1}{2} H_{ipq} H_j{}^{pq}~.
\label{hHH}
\ee
As $H$ is $\nabla$-covariantly constant, $h$ is $\nabla$-covariantly constant as well. A consequence of the integrability condition of $\nabla h=0$ is that the Riemann tensor $R$ decomposes  according to the eigenspace decomposition of $h$. Such a decomposition of the tangent space $TM^n$ of $M^n$ exists as $h$ is symmetric and so it can always be diagonalised at each point $p\in M^n$. In particular, this leads to the decomposition of the tangent
space $TM^n$ of $(M^n, g, H)$ as $\mathcal{T}_0\oplus \mathcal{T}_0^\perp$, where $\mathcal{T}_0$ is the subspace associated with the zero eigenvalue of $h$ and $\mathcal{T}_0^\perp$ is its orthogonal complement. These are $\nabla$-parallel integrable distributions and $(M, g, H)$ locally decomposes as
$(N_0, g_0, 0)\times (N_1, g_1, H)$ -- it is straight forward to demonstrate that $H$ has support in $N_1$. The distribution $\mathcal{T}_0$ coincides with that of the kernel of $H$, i.e. $\mathrm{ker}\, H=\{X\in TM^n\vert \iota_X H=0\}$.  Indeed,   all the components of $H$ along the tangent space of $N$ vanish as
\be
h(V, V)=(\iota_V H, \iota_V H)=0\Longrightarrow \iota_V H=0~,
\ee
for any eigenvector $V$ of $H$ with zero eigenvalue. Then, $\mathcal{L}_V H=\iota_V dH+d\iota_V H=0$ as $H$ is closed, $dH=0$.  Clearly, the domain of dependence of $H$ is in $N_1$.
\end{remark}

 One of the key results that clarifies the decomposition of manifolds with torsion a $3$-form is described in the theorem below.
 \begin{theorem}\label{th:simple}
 Let $(M^n, g, H)$ be an irreducible, complete and simply connected Riemannian manifold with $n\geq 5$ and $H\not=0$ closed and $\h\nabla$-covariantly constant.
 If $\mathfrak{g}_T$ acts irreducibly on the tangent space of $M^n$, then $M^n$ is a compact simple Lie group $K$ with the bi-invariant metric or its dual non-compact symmetric space.
 \end{theorem}
\begin{proof}
The proof is given in \cite{AFF} based on the work of Olmos and Reggiani on the  skew holonomy systems \cite{olmos1, olmos2} and it will not be repeated here. The skew holonomy system is given by the triple $(G_T, \mathcal{V}, H)$, where $G_T$ is the Lie group with Lie algebra $\mathfrak{g}_T$ and $\mathcal{V}=T_pM^n$.  However, it is worth stressing that one of the assumptions is that $M^n$ must be irreducible as a Riemannian manifold, otherwise the theorem does not hold. Note also that although $K$ is equipped with the bi-invariant metric and the torsion $3$-form is also bi-invariant, the connection $\h\nabla$ is not necessarily the parallelisable connection of $K$; $\h\nabla$ is one of the $1$ parameter family of Cartan-Schouten connections $\nabla+\lambda H$, $\lambda\in \bR$. Another point worth mentioning is that for $n=3$, which is excluded by the assumptions of the theorem, the analysis of the skew holonomy systems allows for the possibility that for any Riemannian $3$-dimensional manifold to occur whose holonomy is $SO(3)$.
\end{proof}

Next let us focus on the  ${\mathbf{proof\, of\, Theorem}}$ \ref{th:gdec}.
\begin{proof}
We have to demonstrate that $(M^n, g, H)$ decomposes as
\be
(N, g_N, 0)\times (S, g_S, H_S)\times (R, g_R, H_R)\times (Z, g_Z, H_Z)~.
\ee
It is clear that each component $(W_a, g_a, H_a)$ with $H_a\not=0$ in the decomposition (\ref{dcomp}) is irreducible, i.e. it cannot be decomposed further as a geometry with torsion. If, in addition, $W_a$ is irreducible as a Riemannian manifold and $\mathrm{dim}\, W_a\geq 5$, then $W_a$ must be a compact simple group or its dual non-compact symmetric space as a consequence of Theorem \ref{th:simple}.  The collection of all these spaces
is denoted with the $Z$ subspace of $M^n$ in the decomposition of theorem \ref{th:gdec}.

Clearly, the  subspaces  $(W_a, g_a, H_a)$ with $H_a=0$ in the product (\ref{dcomp}) are collected in the $N$ subspace of the decomposition of $M^n$.
In a similar way, the collection of all $(W_a, g_a, H_a)$ subspaces with $\mathrm{dim}\, W_a= 3$ is denoted with the $S$ subspace of $M^n$ in the decomposition of the theorem \ref{th:gdec}.  We have also seen that if $\mathrm{dim}\, W_a= 4$, then $W_\alpha$ decomposes into a $3$-dimensional subspace that belongs to $S$ and another one that is contained in $N$.

Finally the subspace $R$ in the decomposition of $M^n$ of Theorem \ref{th:gdec} is the collection of all $(W_a, g_a, H_a)$ with $H_\alpha\not=0$ and $\mathrm{dim}\, W_\alpha\geq 5$ subspaces in the decomposition of $(M^n, g, H)$ that, although irreducible as manifolds with torsion, they are reducible as Riemannian manifolds. This completes the proof.
\end{proof}

\begin{remark}
The decomposition (\ref{dcomp}) is not natural, especially when it includes $3$-dimensional components. As any $3$-dimensional manifold can occur, it can include $3$-dimensional groups, like for example $SU(2)$ with the bi-invariant metric and $3$-form. Then, such components can be interpreted as either contained in $G$ or in $S$ in the decomposition. From now on, we shall always assume that $G$ is the maximal subspace of the decomposition (\ref{dcomp}) that is a manifold with torsion a $3$-form compatible with a group structure.
\end{remark}

\begin{corollary}
Assuming that $M$ satisfies the assumptions of Theorem \ref{th:gdec} and $(M, g, H)=(W_1, g_1, H_1)\times (W_2, g_2, H_2)$,  the connection $\h \nabla$ of $M$ decomposes as $\h\nabla=\nabla^1\oplus\h\nabla^2$. Therefore, if $X$ is a $\h\nabla$-covariant constant vector field on $M$ and $X=X_1+X2$, where $X_1$ and $X_2$ are the components of $X$ in $T_pW_1$ and $T_p W_2$ for every $p\in M$, respectively, then
\be
\h\nabla X=0\Longrightarrow \h\nabla^1 X_1=0~\mathrm{and}~ \h\nabla^2 X_2=0~,
\ee
and the domain of $X_1$ and $X_2$ is on $W_1$ and $W_2$, respectively.  Moreover if $(W_1, g_1, H_1)=(W_1, g_1, 0)$, then the above condition gives $\nabla^1 X_1=0$, i.e. $X_1$ is covariantly constant with respect to the Levi-Civita connection of $W_1$.
This condition is extended to all $\h\nabla$-covariantly constant tensor fields on $M$.
\end{corollary}
\begin{proof}
This  follows from the Theorem \ref{th:gdec} and in particular the decomposition of the metric and 3-form $H$ which is compatible with the product $W_1\times W_2$.
\end{proof}

\subsection{KT and CYT manifolds with parallel 3-form torsion}

The main result of Theorem \ref{th:gdec} is that $(M^n, g, H)$ with $H$ both closed and $\h\nabla$-covariantly constant locally decomposes as
\be
(N, g_N, 0)\times (S, g_S, H_S)\times (R, g_R, H_R)\times (Z, g_Z, H_Z)~.
\label{thdcomp}
\ee
Moreover, if $(M^n, g, H)$ is complete and simply connected, then $(M^n, g, H)$ decomposes as above globally.

The  task in this section is to adapt the results of  Theorem \ref{th:gdec} to strong KT and CYT manifolds.  For a recent concise review of the properties of KT, CYT and HKT geometries, see \cite{gpew}. Decomposition results for such manifolds with $\h\nabla$-covariantly constant $3$-form $H$ have been obtained before in \cite{qzfz, bfg, btp}.   In particular, for strong KT manifolds $(M^{2k}, g, I, H)$ with complex structure $I$ and $\h\nabla$-covariantly constant $3$-form $H$, the Theorem A of \cite{btp} states that they locally isometrically  decompose as
\be
(N, g_N, 0)\times (\bR^k, g_{\bR^k}, 0)\times (S, g_S, H_S)\times (K, g_K, H_K)~,
\ee
where $N$ is a K\"ahler manifold with metric $g_N$, $S$ is a product of  $3$-dimensional Sasakian manifolds and $K$ is a semisimple group -- the subspace $(\bR^k, g_{\bR^k}, 0)$, with $g_{\bR^k}$ the standard flat metric, has been inserted in the decomposition such that the product of all last three subspaces admits a complex structure. Thus from the perspective of Theorem \ref{th:gdec}, strong KT manifolds are of NSZ-type, i.e. the $R$ subspace in the decomposition (\ref{thdcomp}) either does not occur or it can be rewritten in terms of the other subspaces.

 Here, we shall describe an adaptation  of Theorem \ref{th:gdec} for strong CYT manifolds $(M^{2k}, g, I, H)$.  For this, we shall use the following lemma.

 \begin{lemma}\label{le:group}
 Let $(M^{2k}, g, I, H)$ be a strong  CYT manifold  with $H\not=0$ that satisfies the assumptions of Theorem \ref{th:gdec}. Then, in the decomposition
 (\ref{thdcomp}), the universal cover of $S$ is a product of $3$-spheres equipped with the round metric and $Z$ is a compact semi-simple group equipped with one of parallelising connections.
 \end{lemma}
 \begin{proof}
 All strong CYT manifolds are steady generalised Ricci solitons. In particular, they satisfy the equation
 \be
 \h\R=\h\nabla\theta
 \ee
 where $\theta$ is the Lee form of the complex structure $I$, see appendix A. As $\theta$ is expressed in terms of $H$ and $I$, $\theta$ is $\h\nabla$-covariantly constant and so
 $\h\R=0$.  This implies that the Ricci curvature of such a CYT manifold can be expressed as
 \be
 \R(X,Y)=\frac{1}{2} h(X,Y)~.
 \label{rh}
 \ee
 where $h$ is given in (\ref{hHH}).

 The above relation (\ref{rh}) between the Ricci curvature and $h$ restricts to every irreducible subspace, as a manifold with torsion,  in the decomposition (\ref{thdcomp}).  In particular, restricting this to one of the $(\Sigma^3_i, g_{{}_{\Sigma^3_i}}, \mathrm{dvol}(\Sigma^3_i))$ subspaces of $S$, it will imply that it is an Einstein space with positive scalar curvature and so its universal cover is a $3$-sphere with the standard round metric. In fact, it is useful to consider it as the $SU(2)$ group manifold with the bi-invariant metric.  Thus, the universal cover of  $S$ may be considered as a product of $SU(2)$ group manifolds equipped with a bi-invariant metric.

 Similarly, applying the relation (\ref{rh}) to each of the subspaces $Z_\ell$ of  the $Z$ subspace of the decomposition (\ref{thdcomp}), one concludes that each subspace must be a compact simple Lie group $K_\ell$ -- the possibility that $Z_\ell$ is the dual non-compact space of some $K_\ell$ is excluded because (\ref{rh}) implies that the Ricci curvature must be positive while the dual non-compact spaces have negative curvature.  Furthermore, the $\h\nabla\vert_{K_{\ell}}$ connection must be one of the parallelising connections of $K_\ell$. This is again required for (\ref{rh}) to hold.  In fact $H_{K_\ell}$ is determined up to a sign in terms of the torsion $3$-form of the parallelising connection. The choice of sign corresponds  to parallelising $K_\ell$ with respect to the left or the right group action.
 \end{proof}

 \begin{remark}
 The above Lemma applies to all compact steady generalised Ricci solitons, see definition \ref{def:rsol},  with $\h\nabla$-covariantly constant $3$-form $H$. This is because for all such manifolds
 $\h\R=0$, see Proposition \ref{prop:gsrs}. The rest of the argument follows.
 \end{remark}

\begin{theorem}\label{th:KTCYT}
 Let $(M^{2k}, g, I, H)$ be a strong  CYT manifold  with $H\not=0$ that satisfies the assumptions of Theorem \ref{th:gdec} and  with a decomposition (\ref{thdcomp})   of NSZ-type, then $M^{2n}$  isometrically  and holomorphically locally decomposes into a product $N\times K$, where  $N$ is a Calabi-Yau manifold and $K$ is a KT group manifold, not necessarily semisimple,  equipped with the parallelising  connection with torsion. Moreover, if $M$ is simply connected and complete, then $M=N\times K$ globally.
\end{theorem}
\begin{proof}
We have already demonstrated in Lemma \ref{le:group} that $M^{2k}$ is locally isometric as a Riemannian manifold to $N_0\times G$, where $N_0$ is a Riemannian manifold whose tangent vectors are in the kernel of the quadratic form $h$ -- the zero eigenvalue subspace -- and $G=S\times Z$ is a compact simply connected semisimple Lie group equipped with a parallelising connection with torsion.  At each point $p\in M^{2k}$, the tangent space splits as
\be
T_p M=T_p N_0\oplus T_p G~,
\ee
where the splitting is orthogonal with respect to the metric $g$.

There are two cases to consider. One is that $I (T_p G)\subseteq T_p G$, i.e the complex structure $I$ preserves the subspace $T_p G$. If this is the case, then
$I (T_p N_0)\subseteq T_p N_0$. This follows from the Hermiticity of the KT metric. Indeed if $I(X)=Y+Z$ with $Y\in T_pG$ and $X, Z \in T_pN_0$, then
 \be
 0=-g(I(Y), X)=g(Y, I(X))=g(Y, Y+Z)=g(Y,Y)~,
 \ee
 as $I(Y)\in T_pG$.  Thus $Y=0$ and $I(T_pN_0)\subseteq T_pN_0$, i.e.  $I$ decomposes as $I=I_G\oplus I_{N_0}$.  We have already demonstrated that the $\h\nabla$ covariant derivative decomposes as $\h\nabla= \nabla^{N_0}\oplus \h\nabla^G$, where $\nabla^{N_0}$ is the Levi-Civita connection on $N_0$ and $\h\nabla^G$ is the connection with torsion $H$ on $G$. Then, $\h\nabla I=0$ implies that the domain of $I_G$ and $I_{N_0}$ is $G$ and $N_0$, respectively. $I_{N_0}$ is integrable on $N_0$ as it is $\nabla^{N_0}$-covariantly constant with $\R=0$, i.e. $N_0$ is a Calabi-Yau manifold. Thus $M^{2k}$ locally decomposes into a product $N_0\times G$, where $N_0$ is a Calabi-Yau manifold and $G$ is a group with a KT structure establishing the theorem for KT manifolds in this case after setting $N=N_0$.

However, another possibility is that $I T_p G\nsubseteq T_p G$. To investigate this, identify the Lie algebra of $G$, $\mathfrak{g}$, with the left-invariant vector fields $\{ L_a; a=1, \dots, \mathrm{dim}\, G\}$ on $G$ and $\h\nabla^G L_a=0$, i.e. these are $\h\nabla$-covariantly constant. Then consider the span
\be
\mathfrak{k}=\mathrm{Span}\,\langle L_a, I(L_a)\rangle_{ L_a\in\mathfrak{g}}~.
\ee
 If all $I(L_a)\in \mathfrak{g}$, then $I T_p G\subseteq T_p G$ and this case has already been investigated above. Thus, there must exist some vector fields $I(L)$, where $L$ is a left-invariant vector field on $G$, that are not contained in $\mathfrak{g}$, i.e $I(L)=I(L)_{N_0}+I(L)_{G}$. To continue, notice that all $I(L_a)$ are $\h\nabla$-covariantly constant as both $I$ and $L_a$ are $\h\nabla$-covariantly constant. As a result, their Lie algebra bracket can be expressed in terms of $H$, which is a property of symmetries of a KT structure.  Thus $\mathfrak{k}$ can be identified with the Lie algebra of a group $K$ whose structure constants are given by $H$. In  this case, $K$ is not semisimple.  Moreover, $\h\nabla I(L)=0$ implies that $\h\nabla I(L)_{N_0}=0$ and $\nabla I(L)_{G}=0$ for $L$ in $\mathfrak{g}$.

Clearly, $\mathfrak{k}$ can be orthogonally decomposed as
\be
\mathfrak{k}=\mathfrak{g}\oplus\mathfrak{m}~,
\ee
where the vector fields spanning $\mathfrak{m}$ are $\nabla$-covariantly constant and therefore they commute. Moreover, since $[\mathfrak{m}, \mathfrak{g}]=0$, as the components of $H$ along $\mathfrak{m}$ vanish, the decomposition above is compatible with the Lie algebra structure.  The integrability condition of the $\nabla$-covariant constancy of the elements of $\mathfrak{m}$ implies that the components of the Riemann tensor of $N_0$ along $\mathfrak{m}$ vanish and so the reduced holonomy of the Levi-Civita connection of $N_0$ reduces further from $SO(n_0)$ to $SO(\ell)$, where $\ell=n_0-\mathrm{dim}\, \mathfrak{m}$. Thus, $N_0$ is locally isometric to $N\times \bR^\ell$. A similar argument as that presented in the previous case establishes that $M$ is locally isometric and holomorphic  to $N\times K$, where $K$ is a KT group manifold and $N$ is a Calabi-Yau one. Notice that the decomposition of $K$ as $G\times\bR^\ell$ is isometric but not holomorphic.
If $M$ is simply connected and complete, then $M=N\times K$ globally.
\end{proof}

\begin{remark}
In the proof above we have used that the commutators of two $\h\nabla$-covariantly constant vector fields on a Riemannian manifold $(M,g,H)$ is given in terms of $H$, $dH=0$. This is well known, see \cite{uggp}. In particular, if $X$ and $Y$ are $\h\nabla$-covariantly constant vector fields, then $[X,Y]=(\iota_X \iota_Y H)^\flat$.  Note also that group manifolds with a complex structure are also referred to as Samelson spaces.
\end{remark}

\begin{remark}
The result of the above theorem for strong CYT manifolds has been demonstrated in \cite{qzfz, bfg, btp} under some assumptions -- in the last two references the theorem is stated under the assumption of compactness. Here, we assume  that $(M^{2k}, g, I, H)$ is of
NSZ-type. This leads to a simplification of that proof as compared to the more general result presented in \cite{qzfz}.  However, this additional assumption suffices for our purpose as it is  used again in the investigation $G_2$ and $\mathrm{Spin}(7)$ manifolds with torsion below.
\end{remark}

This result can be used to classify the CYT in 6 dimensions with $\h\nabla$-covariantly constant non-vanishing torsion $H$. For a more extensive list of examples, see \cite{btp}.

\begin{corollary}
Let $(M^6, g, I, H)$ be a simply connected complete strong CYT manifold with $\h\nabla$-covariantly constant torsion $H$ and $H\not=0$, i.e. $M^6$ is not Calabi-Yau. Then, $M^6$ is isometric to
\be
SU(2)\times SU(2)~,~~~SU(2)\times \bR^3~.
\ee
\end{corollary}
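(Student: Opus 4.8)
The plan is to apply Theorem~\ref{th:KTCYT} directly and then enumerate the possibilities allowed by the dimension count. First I would invoke Theorem~\ref{th:KTCYT} to conclude that $M^6$ is isometric and biholomorphic to a product $N\times K$, where $N$ is a Calabi--Yau manifold and $K$ is a CYT group manifold whose structure constants are $H$; since $M$ is simply connected and complete, the splitting is global. The group $K$ carries the bi-invariant metric associated to the positive-definite Killing form $h$, so $K$ is semisimple and compact; moreover, for $K$ to admit a left-invariant complex structure compatible with the KT data, $\dim K$ must be even. Because $H\neq0$, we have $\dim K\geq 3$, and in fact $\dim K \geq \dim \mathfrak{g}$ where $\mathfrak{g}$ is the semisimple ideal; the smallest compact semisimple Lie group is $SU(2)$ with $\dim 3$.

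The key step is the dimension bookkeeping. We have $\dim N + \dim K = 6$ with $\dim N$ even (Calabi--Yau) and $\dim K$ even. A compact semisimple Lie group has dimension $3, 6, 8, \ldots$; combined with the evenness of $\dim K$ forced by the complex structure, the admissible values are $\dim K = 6$ or $\dim K$ obtained by enlarging a $3$-dimensional semisimple part with an abelian factor. If $\dim K = 6$, then either $K$ is semisimple of dimension $6$, i.e. locally $SU(2)\times SU(2)$, and $N$ is a point; or, following the non-holomorphic splitting $K \cong G\times \mathbb{R}^\ell$ in the proof of Theorem~\ref{th:KTCYT}, $K$ has a $3$-dimensional semisimple part $G=SU(2)$ together with a $3$-dimensional abelian factor $\mathbb{R}^3$, and again $N$ is a point, giving $M^6 = SU(2)\times \mathbb{R}^3$ with a non-product complex structure mixing the $SU(2)$ and $\mathbb{R}^3$ directions. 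The remaining case $\dim K = 2$ is excluded because a $2$-dimensional group with $H\neq 0$ cannot have nonzero $3$-form structure constants; and $\dim K = 4$ would require a semisimple group of dimension $4$ (none exists) or a $3$-dimensional semisimple part plus a $1$-dimensional abelian factor, total odd unless padded further, which overshoots. One checks that $\dim K = 4$ with $N$ of dimension $2$ is likewise impossible since there is no compact semisimple group whose dimension plus an even abelian contribution equals $4$ starting from $3$.

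The main obstacle is making precise that the only even-dimensional ``CYT group manifold'' pieces that fit inside dimension $6$ are $SU(2)\times \mathbb{R}^3$ and $SU(2)\times SU(2)$, and in particular ruling out exotic $4$-dimensional possibilities and confirming that when $\dim K = 6$ one genuinely gets $SU(2)\times SU(2)$ rather than some other rank-$2$ compact semisimple group. Here one uses that the only compact simple group of dimension $\leq 6$ is $SU(2)$ (dimension $3$), so a semisimple group of dimension $6$ must be $SU(2)\times SU(2)$; there is no simple group of dimension $6$. For the $SU(2)\times\mathbb{R}^3$ case I would note that the left-invariant complex structure on this $6$-dimensional group is exactly the Samelson-type structure on $U(3)\cong (U(1)\times SU(2))\times (\text{something})$ — more precisely, $SU(2)\times\mathbb{R}^3$ with an invariant complex structure making it biholomorphic to a complex Lie group quotient — and that the CYT (vanishing first Chern class of $\h\nabla$) condition is automatic since the holonomy of $\h\nabla$ on any group manifold is trivial. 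Finally, since $H$ is assumed nonzero we exclude the purely Calabi--Yau case $K = \{e\}$, so exactly the two listed spaces remain. I would present the argument as: apply Theorem~\ref{th:KTCYT}; observe $N$ is Calabi--Yau of even dimension and $K$ is a compact semisimple (possibly with abelian factors in the non-holomorphic sense) group of even dimension with $\dim K \geq 3$; enumerate; conclude.
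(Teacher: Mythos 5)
Your overall strategy is the same as the paper's: apply the product decomposition and enumerate the compact simply connected semisimple groups of dimension at most $6$, namely $SU(2)$ and $SU(2)\times SU(2)$. The $\dim K=6$ branch is handled correctly. However, there is a genuine error in your case analysis for $G=SU(2)$: you dismiss $\dim K=4$ on the grounds that a $3$-dimensional semisimple part plus a $1$-dimensional abelian factor is ``total odd,'' but $3+1=4$ is even, and this is in fact exactly the case that occurs in the paper's proof. Since $\mathfrak{k}=\mathrm{Span}\langle L_a, I(L_a)\rangle$ is $I$-invariant it has even dimension, so $\dim\mathfrak{m}=\dim\mathfrak{k}-3$ is \emph{odd}, equal to $1$ or $3$; your implicit assumption that the abelian contribution is even is what leads you astray. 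When $\dim\mathfrak{m}=1$ one has $K=SU(2)\times\bR$ and a $2$-dimensional Calabi--Yau factor $N$, and to finish you need the observation (which the paper supplies and you omit) that a $2$-dimensional Ricci-flat K\"ahler manifold is flat, hence $N=\bR^2$ for $M$ simply connected and complete. Only then does this branch collapse to $SU(2)\times\bR^3$. Your final list happens to be correct because the case you wrongly excluded yields the same isometry type, but as written the argument does not establish that the Calabi--Yau factor is trivial or flat; it simply never considers the configuration in which it is nontrivial.

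Two smaller points: the remark identifying $SU(2)\times\bR^3$ with a Samelson structure on $U(3)$ is not right ($U(3)$ has dimension $9$), and $K$ itself is not compact semisimple in the second case --- it is the product of a semisimple group with an abelian factor, which you do acknowledge later but should state consistently, since the bi-invariant Killing-form metric argument applies only to the semisimple part $G$ supplied by Theorem~\ref{th:gdec}.
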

\begin{proof}
The compact simply connected semisimple groups $G$ up to dimension 6 are isomorphic to either $SU(2)$  or to $SU(2)\times SU(2)$.  In the latter case, there is a CYT structure on $M=K=SU(2)\times SU(2)$. This is equipped with the bi-invariant  metric $g$  and 3-form torsion $H$ -- the latter is constructed from the structure constants of the two $SU(2)$ subgroups. If the radii of the two spheres are equal, there is a left- (or right-) invariant  complex structure $I$ on $SU(2)\times SU(2)$ compatible with the metric and torsion, see also remark below.

If the semi-simple group $G$ is isomorphic to $SU(2)$, then $M$ is isometric to $SU(2)\times \bR^3$. In this case, $N=\bR^2$ and $K=SU(2)\times \bR$ -- the dimension of the span $\mathfrak{k}$ for an integrable complex structure is four. This is because the curvature of all 2-dimensional Calabi-Yau manifolds vanishes, i.e. $N=\bR^2$.  The metric is constructed from the bi-invariant metric on $SU(2)$ and the bi-invariant 3-form torsion $H$ is identified with the structure constants of $SU(2)$. There is a left- (or right-) invariant complex structure on $SU(2)\times \bR$ which can be expended to $SU(2)\times \bR^3$ that  is  also compatible with $g$ and $H$.
\end{proof}

A  general exploration  of compact, strong, 6-dimensional  CYT manifolds has been carried out in \cite{abls} without the assumption that $H$ is $\h\nabla$-covariantly constant. However, the classification has not been completed as of yet.

\begin{remark}
There are several CYT structures that can be put on $SU(2)\times SU(2)$, see e.g. \cite{gpheterotic}. In all cases, the Hermiticity of the metric requires that the radii of the two $SU(2)$ subspaces are equal. $SU(2)\times SU(2)$ is a semisimple  group manifold and so this case $M=K$ is a group.
Similarly, there are several CYT structures on $SU(2)\times \bR^3$, see  a description  of the left- and right- invariant complex structures on $SU(2)\times \bR$ in \cite{gpew}.

\end{remark}

\subsection{HKT manifolds with parallel 3-form torsion}

The results of the previous section for strong KT and strong CYT manifolds can be generalised to strong HKT manifolds.  This result has already been obtained for compact HKT manifolds in \cite{bfgv}. Here, we shall use Theorem \ref{th:gdec} for the proof.  In particular, we have the following:

\begin{theorem}\label{th:HKT}
 Let $(M^{4q}, g, I_r, H)$ is a strong HKT manifold with $H\not=0$  that satisfies the assumptions of Theorem \ref{th:gdec} and with a decomposition (\ref{thdcomp})   of NSZ-type, then $M$ locally isometrically  and tri-homorphically splits into a product $N\times K$, where $N$ is hyper-K\"ahler manifold and $K$ is a group manifold with an HKT structure. Moreover, if $M$ is simply connected and complete, then $M=N\times K$ globally.
\end{theorem}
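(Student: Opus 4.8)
The plan is to combine Theorem~\ref{th:gdec} with the argument of Theorem~\ref{th:KTCYT}, run simultaneously for the three complex structures $I_1,I_2,I_3$ of the HKT structure. By Theorem~\ref{th:gdec} and the Corollary following it, $(M^{4q},g,H)$ is locally isometric to $N_0\times G$, where $G$ is a simply connected semisimple group with structure constants $H$, $N_0$ is a Riemannian manifold with $\iota_VH=0$ for all $V\in T_pN_0$, and $\h\nabla=\nabla^{N_0}\oplus\h\nabla^G$. Identify $\mathfrak g=T_pG$ with a basis $\{L_a\}$ of $\h\nabla$-parallel (say left-invariant) vector fields on $G$ and set
\[
\mathfrak k=\mathrm{Span}\,\big\langle\, L_a,\ I_1(L_a),\ I_2(L_a),\ I_3(L_a)\,\big\rangle\subseteq T_pM .
\]
Because $I_1I_2=I_3$ and its cyclic images hold, $\mathfrak k$ is invariant under each $I_r$. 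Since $I_r$ and the $L_a$ are all $\h\nabla$-parallel, so is every generator of $\mathfrak k$; hence, using the bracket identity $[X,Y]=(\iota_X\iota_YH)^\flat$ for $\h\nabla$-parallel $X,Y$ (the remark after Theorem~\ref{th:KTCYT}) together with $\iota_VH=0$ on $T_pN_0$, every bracket of two generators of $\mathfrak k$ already lies in $\mathfrak g$. Therefore $\mathfrak k$ closes into a Lie algebra with structure constants $H$, $\mathfrak g$ is an ideal of it, and the orthogonal complement $\mathfrak m:=\mathfrak k\ominus\mathfrak g$ --- spanned by the $T_pN_0$-components of the vectors $I_r(L_a)$ --- is a central abelian ideal, so $\mathfrak k=\mathfrak g\oplus\mathfrak m$. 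When every $I_r(L_a)$ already lies in $\mathfrak g$ this degenerates to $\mathfrak k=\mathfrak g$, $\mathfrak m=0$, the case in which each $I_r$ preserves $T_pG$ and the argument simplifies as in the first case of Theorem~\ref{th:KTCYT}.

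Next I would extract the flat factor, exactly as in Theorem~\ref{th:KTCYT}. The elements of $\mathfrak m$ are $\nabla^{N_0}$-parallel and pairwise commuting vector fields on $N_0$; their $\nabla^{N_0}$-parallelism forces the Riemann tensor of $N_0$ to have no component along $\mathfrak m$, so the holonomy of $\nabla^{N_0}$ reduces and $N_0$ is locally isometric to $N\times\bR^{\ell}$ with $\ell=\dim\mathfrak m$. Setting $T_pK=\mathfrak k=\mathfrak g\oplus\mathfrak m$ gives the orthogonal splitting $T_pM=T_pN\oplus T_pK$ compatible with $\h\nabla=\nabla^N\oplus\h\nabla^K$, $g=g^N\oplus g^K$, and with $H$ supported on $\mathfrak g\subset\mathfrak k$. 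Thus $M$ is locally isometric to $N\times K$, where $K$ is the group manifold with Lie algebra $\mathfrak k$ (reductive, and not semisimple when $\mathfrak m\neq0$); if in addition $M$ is simply connected and complete, the splitting is global by the de Rham theorem.

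It then remains to split the hypercomplex structure, which is the actual content of the statement. Since each $I_r$ is orthogonal and preserves $\mathfrak k=T_pK$, it also preserves $T_pN=\mathfrak k^\perp$; hence the local decomposition $M\cong N\times K$ is tri-holomorphic, $I_r$ restricting to complex structures $I_r^N$ on $N$ and $I_r^K$ on $K$, each triple obeying the quaternion relations and each of $g^N,g^K$ being Hermitian with respect to the corresponding triple. On $K$ the connection $\h\nabla^K$ has trivial holonomy and $\h\nabla^KI_r^K=0$, so $K$ carries an HKT structure. On $N$ one has $\h\nabla|_{TN}=\nabla^N$, because $\iota_VH=0$ there, so $\nabla^NI_r^N=0$ and $N$ is hyper-K\"ahler. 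This would complete the argument.

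I expect the only genuinely delicate point --- and the one absent from the proof of Theorem~\ref{th:gdec} --- to be precisely the simultaneous handling of $I_1,I_2,I_3$: one must produce a single enlarged algebra $\mathfrak k$ invariant under all three, verify that it closes under the bracket with structure constants $H$, and, crucially, check that its orthogonal complement inside $T_pN_0$ is again invariant under all three, so that \emph{both} factors inherit honest HKT and hyper-K\"ahler structures rather than a single complex structure apiece; the quaternionic-span construction above is what secures this. It is also worth recording that $\dim\mathfrak k$ is automatically a multiple of $4$, whereas $\dim\mathfrak m$ need not be --- e.g. $\mathfrak g=\mathfrak{su}(2)$ forces $\ell=1$, recovering the familiar HKT group $S^1\times S^3$.
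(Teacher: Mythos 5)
Your proposal is correct and follows essentially the same route as the paper: apply Theorem \ref{th:gdec}, enlarge $\mathfrak{g}$ to the quaternionic span $\mathfrak{k}$ of the $\h\nabla$-parallel left-invariant fields, use $[X,Y]=(\iota_X\iota_Y H)^\flat$ and $\iota_V H=0$ on $T_pN_0$ to close $\mathfrak{k}=\mathfrak{g}\oplus\mathfrak{m}$ with $\mathfrak{m}$ central and $\nabla$-parallel, split off the flat factor, and conclude tri-holomorphically. Your only deviation is a mild improvement: you include $I_3(L_a)$ in the span (the paper uses only $I_1(L_a),I_2(L_a)$), which makes the $I_r$-invariance of $\mathfrak{k}$, and hence of $\mathfrak{k}^\perp$, immediate.
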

\begin{proof}

The proof is similar to the one given for the CYT case in the Theorem \ref{th:KTCYT} as $(M^{4q}, g, I_r, H)$ is admits a CYT structure with respect to each of the complex structures $I_r$.  So,  we shall be brief. Let $\{I_r, r=1,2,3\}$ be the hyper-complex structure associated with the HKT structure on $M$, $I_1^2=I_2^2=-{\bf 1}$, $I_1I_2=-I_2 I_1$ and $I_3=I_1 I_2$. We have already demonstrated in Theorem \ref{th:KTCYT}, using the Lemma \ref{le:group},  that $M$ is locally isometric as a Riemannian manifold  to $N_0\times G$. Thus the tangent space at $p\in T_pM$, $p\in M$, decomposes as $T_p N_0\oplus T_p G$.
If $I_r (T_pG)\subseteq T_p G$ for $r=1,2$, then a similar argument to that produced for the KT case in Theorem \ref{th:KTCYT} implies that $M$ is locally isometric to
a group manifold $K=G$ with an HKT structure. Note that if $I_r (T_pG)\subseteq T_p G$ for $r=1,2$, then it follows that $I_3 (T_pG)\subseteq T_p G$.

On the other hand if $I_r T_pG\nsubseteq T_p G$ for $r=1,2$, we proceed as in the CYT case and  identify the Lie algebra of $G$, $\mathfrak{g}$, with the left-invariant vector fields $\{ L_a; a=1, \dots, \mathrm{dim}\, G\}$ on $G$. These are $\h\nabla$-covariantly constant. Then, consider the span
\be
\mathfrak{k}=\mathrm{Span}\,\langle L_a, I_1(L_a), I_2(L_a)\rangle_{ L_a\in\mathfrak{g}}~.
\ee
If all $I_r(L_a)\in \mathfrak{g}$, $r=1,2$,  then $I_r( T_p G)\subseteq T_p G$ and this case has already been investigated above. Thus, there must exist some vector fields $I_r(L_a)$ that are not contained in $\mathfrak{g}$. To continue, notice that all $I_r(L_a)$ are $\h\nabla$-covariantly constant as both $I_r$ and $L_a$ are $\h\nabla$-covariantly constant. As a result, their Lie algebra bracket can be expressed in terms of $H$.  Thus, $\mathfrak{k}$ can be identified with the Lie algebra of a group $K$ whose structure constants are given by $H$. In  this case, $K$ is not semisimple.  Writing,
\be
\mathfrak{k}=\mathfrak{g}\oplus\mathfrak{m}~,
\ee
as in the KT case,  the vector fields spanning $\mathfrak{m}$ are $\nabla$-covariantly constant and therefore they commute. Moreover, since $[\mathfrak{m}, \mathfrak{g}]=0$, as the components of $H$ along $\mathfrak{m}$ vanish, the decomposition above is compatible with the Lie algebra structure.  The integrability condition of the $\nabla$-covariant constancy of the elements of $\mathfrak{m}$ implies that the components of the Riemann tensor of $N_0$ along $\mathfrak{m}$ vanish and so the reduced holonomy of the Levi-Civita connection of $N_0$ reduces further from $SO(n_0)$ to a subgroup of $SO(\ell)$, where $\ell=n_0-\mathrm{dim}\, \mathfrak{m}$. Thus $N_0$ is locally isometric to $N\times \bR^\ell$. A similar argument as that presented in the CYT case establishes that $M$ is locally isometric and tri-holomorphic  to $N\times K$, where $K$ is an HKT group manifold and $N$ is a hyper-K\"ahler one.
If $M$ is simply connected and complete, then $M=N\times K$ globally.
\end{proof}

\begin{corollary}\label{cor:hktx}
Let $(M^8, g, I_r, H)$ be a  strong HKT manifold with $\h\nabla$-covariantly constant torsion $H$ and $H\not=0$, i.e. $M^8$ is not hyper-K\"ahler. Then, $M^8$ is locally isometric to $SU(3)$, $\bR^2\times SU(2)\times SU(2)$ and $N^4\times \bR\times SU(2)$, where $N^4$ is a hyper-K\"ahler manifold. Moreover, there is at least one HKT structure on these spaces such that $M^8$ locally tri-holomorphic to
\be
SU(3)~,~~~(\bR\times SU(2))\times (\bR\times SU(2))~,~~~N^4\times (\bR\times SU(2))~,
\ee
where $\bR\times SU(2)$  is an HKT manifold  and $N^4$ is a hyper-K\"ahler manifold.
\end{corollary}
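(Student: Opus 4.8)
The plan is to specialise Theorem~\ref{th:HKT} to $n=8$ and then carry out a dimension count. By that theorem, $M^8$ is locally isometric and tri-holomorphic to a product $N\times K$ with $N$ hyper-K\"ahler and $K$ a group manifold carrying an HKT structure; and, reading off the proof of that theorem, $K$ is locally isometric to $G\times\bR^\ell$, where $G$ is a simply connected semisimple Lie group with positive definite Killing form (hence compact) and the $\bR^\ell$ factor is a central abelian ideal of $\mathfrak{k}$ spanned by $\h\nabla$-parallel vector fields along which $H$ vanishes. Since $K$ carries an HKT (in particular hyper-complex) structure, $\dim K$ is a multiple of $4$, and likewise $\dim N$ is a multiple of $4$ as $N$ is hyper-K\"ahler. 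Finally, $H$ is supported entirely on the factor $G$ by Theorem~\ref{th:gdec}, so $H\neq0$ forces $G$ to be non-trivial; combined with $\dim N+\dim K=8$ this leaves $\dim K\in\{4,8\}$ and correspondingly $\dim N\in\{4,0\}$.

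First I would list the admissible $G$. The compact simple Lie algebras of dimension at most $8$ are $\mathfrak{su}(2)$ (dimension $3$) and $\mathfrak{su}(3)$ (dimension $8$), since the next one, $\mathfrak{sp}(2)\cong\mathfrak{so}(5)$, already has dimension $10$; hence the simply connected compact semisimple groups of dimension $\le 8$ are exactly $SU(2)$, $SU(2)\times SU(2)$ and $SU(3)$, of dimensions $3$, $6$ and $8$. Now I would match dimensions. If $G=SU(3)$ then $\dim K\ge 8$, so $\ell=0$, $\dim N=0$ and $M^8=SU(3)$; here the $I_r$ necessarily preserve $\mathfrak{su}(3)$, since otherwise $\mathfrak{k}$ would strictly contain the $8$-dimensional $\mathfrak{su}(3)$. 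If $G=SU(2)\times SU(2)$, then $\mathfrak{g}$ admits no quaternionic structure (its dimension $6$ is not divisible by $4$), so $\mathfrak{k}\supsetneq\mathfrak{g}$; divisibility by $4$ together with $\dim\mathfrak{k}\le 8$ forces $\dim\mathfrak{k}=8$, i.e.\ $\ell=2$, $\dim N=0$ and $M^8=\bR^2\times SU(2)\times SU(2)$. If $G=SU(2)$, then $\mathfrak{g}$ again carries no compatible complex structure (being odd-dimensional), so $\mathfrak{k}\supsetneq\mathfrak{g}$ and $\dim\mathfrak{k}\in\{4,8\}$: for $\dim\mathfrak{k}=4$ one has $\ell=1$, $\dim N=4$ and $M^8=N^4\times\bR\times SU(2)$; for $\dim\mathfrak{k}=8$ one has $\mathfrak{k}=\mathfrak{su}(2)\oplus\bR^5=(\mathfrak{su}(2)\oplus\bR)\oplus\bR^4$, and the flat $\bR^4$ summand is hyper-K\"ahler and splits off tri-holomorphically, so $M^8$ is again of the form $N^4\times\bR\times SU(2)$ with $N^4=\bR^4$. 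This produces precisely the three spaces of the statement.

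It remains to realise the asserted tri-holomorphic splittings by an explicit HKT structure in each case. On $\bR\times SU(2)$ there is the standard left- (equivalently right-) invariant HKT structure built from the $\mathfrak{su}(2)$ structure constants and the flat direction, see \cite{gpew,gpheterotic}; taking products of this with a second copy of $\bR\times SU(2)$, respectively with a hyper-K\"ahler $N^4$ (possibly flat), settles the cases $(\bR\times SU(2))\times(\bR\times SU(2))$ and $N^4\times(\bR\times SU(2))$, since a product of HKT manifolds is HKT. For $SU(3)$ one uses its left- (or right-) invariant HKT structure, whose existence is classical, see \cite{gpheterotic} and references therein; as $\mathfrak{su}(3)$ is $8$-dimensional and closed under the hyper-complex structure, no abelian factor is required and $K=G$. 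Finally, since the parts of Theorem~\ref{th:HKT} used here are local unless $M$ is simply connected and complete, the conclusions are stated up to local isometry and local tri-holomorphism, and the last sentence of the corollary follows exactly as in the theorems above.

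The main obstacle will be the structural bookkeeping on the group factor: extracting from the proof of Theorem~\ref{th:HKT} the precise shape $K\cong G\times\bR^\ell$ with $\bR^\ell$ central and $\h\nabla$-parallel, together with the parity and divisibility arguments that rule out $\dim K=4$ for $G=SU(2)\times SU(2)$ and pin down the surviving possibilities; and recognising that the a priori separate case $SU(2)\times\bR^5$ is not genuinely new but falls into the family $N^4\times\bR\times SU(2)$ with the flat factor $N^4=\bR^4$. By contrast, the existence of compatible HKT structures on $\bR\times SU(2)$ and on $SU(3)$ is standard and only has to be recalled or cited.
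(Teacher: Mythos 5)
Your proposal is correct and follows essentially the same route as the paper: apply Theorem \ref{th:HKT}, enumerate the compact simply connected semisimple groups of dimension at most $8$ (namely $SU(2)$, $SU(2)\times SU(2)$, $SU(3)$), match dimensions against the hyper-K\"ahler factor and the abelian extension $\bR^\ell$ of $\mathfrak{g}$ to $\mathfrak{k}$, and then invoke the known HKT structures on $SU(3)$ and $\bR\times SU(2)$. Your divisibility-by-$4$ bookkeeping and the observation that $\bR^5\times SU(2)$ is subsumed under $N^4\times\bR\times SU(2)$ with $N^4=\bR^4$ make explicit what the paper's shorter proof leaves implicit, but the argument is the same.
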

\begin{proof}

There are three compact semisimple simply connected groups $G$ up to dimension 8, $SU(3)$, $SU(2)\times SU(2)$ and $SU(2)$.  It is known that $M=G=SU(3)$ admits an HKT structure \cite{Spindel}. This is explicitly constructed in section  \ref{sub:example} below.

For $G=SU(2)\times SU(2)$, $M^8$ is locally isometric to $\bR^2\times SU(2)\times SU(2)$. As $\bR\times SU(2)$  is an HKT manifold,  $M^8$ is locally triholomorphic to $(\bR\times SU(2))\times (\bR\times SU(2))$.

Finally for $G=SU(2)$, we have established that $M$ is locally isometric to $N^4\times \bR\times SU(2)$. Next, if $\mathrm{dim}\, \mathfrak{k}=4$, then $\bR\times SU(2)$ has an HKT structure and $M^8$ locally splits tri-holomorphically as  $N^4\times (\bR\times SU(2))$, where $N^4$ is a hyper-K\"ahler manifold. On the other hand if $\mathrm{dim}\, \mathfrak{k}=8$, then $M^8$ is locally isometric to $\bR^5\times SU(2)$.   As the subspace   $\bR\times SU(2)$ is an HKT manifold and $\bR^4$ is hyper-K\"ahler, there is an HKT structure such that $M^8$ is locally tri-holomorphic to $(\bR\times SU(2))\times \bR^4$.

\end{proof}

\begin{remark}
Under the assumption of compactness, the result of this theorem has been demonstrated in \cite{bfgv} without the assumption that $(M^{4q}, g, I_r, H)$ must be of
NSZ-type. It is likely that neither compactness nor the NSZ-type of restrictions are required for the proof of the theorem and can be replaced by weaker assumptions. 
\end{remark}

\begin{remark}

Compact examples of 8-dimensional HKT manifolds with $\h\nabla$-covariant constant torsion include
\be
SU(3)~,~~~(U(1)\times SU(2))\times (U(1)\times SU(2))~,~~~(U(1)\times SU(2))\times T^4~,~~~(U(1)\times SU(2))\times K_3~.
\ee
More compact examples can be constructed by identifying these spaces with a discrete subgroup of their isometry group that in addition preserves the hyper-complex structure, i.e. it preserves their HKT structure.  Note though that there are many non-compact 4-dimensional hyper-K\"ahler manifolds in addition to the compact ones $T^4$ and $K_3$.

It should be also noted that $U(1)\times SU(2)$ admits several HKT structures explored at length in \cite{gpew}. Thus $(U(1)\times SU(2))\times (U(1)\times SU(2))$ and $(U(1)\times SU(2))\times K_3$ also admit several HKT structures.

\end{remark}

\begin{remark}
We have not classified all HKT structures on $M^8$. This would have required to consider all possibilities. At every point on an 8-manifold there are
$SO(8)/Sp(2)$ (almost) hyper-complex structures compatible with a metric. Such a task is an algebraic problem in the present context as we are effectively looking for left invariant structures. Nevertheless, it is rather involved even after taking into account the obvious automorphisms of the Lie algebras involved to simplify the equations.
\end{remark}

\section{$G_2$ and $\mathrm{Spin}(7)$ manifolds with torsion}

\subsection{$G_2$ manifolds}

Let $(M^7, g, \varphi, H)$ be a manifold with torsion and closed 3-form $H$ such that the holonomy of $\h\nabla$ connections is contained in $G_2$ -- see \cite{tfsi2} for a description of the geometry.  For such manifolds, the fundamental $G_2$ form, $\varphi$, is $\h\nabla$-covariantly constant, $\h\nabla\varphi=0$.

All strong $G_2$ manifolds with torsion are steady generalised Ricci solitions, see definition \ref{def:rsol}. In fact, one can show that
\be
\h\R=\h\nabla \theta~,
\ee
where $\theta$ is the Lee form appropriately normalised. The normalisation of $\theta$  has no significance in the argument that follows. The Lee form $\theta$ can be expressed in terms of the torsion $H$ and the fundamental form $\varphi$. If, in addition,  $H$  is $\h\nabla$-covariantly constant, $\h\nabla \theta=0$, and so
$\h\R=0$. Then, the remark below the Lemma \ref{le:group} applies.  Therefore if $(M^7, g, \varphi, H)$ is of NSZ-type, then $M^7$ locally decomposes  as $N\times G$, where $N$ is a Riemannian manifold with $H\vert_N=0$ and $G$ is a compact semisimple Lie group equipped with a parallelising connection.  For the proof of the theorem below, the techniques adopted for CYT manifolds have to be adapted as the fundamental form $\varphi$ of the $G_2$ structure is not an endomorphism of the tangent space. Instead, it defines a cross product on the tangent space.

\begin{theorem}\label{cor:g2}
Let $(M^7, g, H)$ be a strong $G_2$ manifold with torsion the closed 3-form $H\not=0$ that satisfies the assumptions of Theorem \ref{th:gdec}  and with a decomposition (\ref{thdcomp})   of NSZ-type.  Then, $M^7$ is locally isometric to
 $\bR\times SU(2)\times SU(2)$ or to $N^4\times SU(2)$, where $N^4$ is a hyper-K\"ahler manifold. Moreover, if $M^7$ is complete and simply connected, then $M^7$ is globally isometric to these manifolds.
\end{theorem}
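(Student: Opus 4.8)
The plan is to combine Theorem \ref{th:gdec} with the constraint that the holonomy of $\h\nabla$ reduces to $G_2\subset SO(7)$, so that $M^7$ carries a $\h\nabla$-parallel fundamental $3$-form $\varphi$. By Theorem \ref{th:gdec}, since $H$ is closed and $\h\nabla$-parallel, $M^7$ locally splits as $N\times G$ with $G$ a simply connected semisimple group whose structure constants are $H$, and $\iota_V H = 0$ for $V\in T_pN$. First I would enumerate the semisimple groups that can occur: in dimension $\leq 7$ the only compact simply connected semisimple groups are $SU(2)$ (dimension $3$), $SU(2)\times SU(2)$ (dimension $6$), and $SU(3)$ (dimension $8$, too large). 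Since a group manifold that is not a single simple factor does not admit a bi-invariant structure whose $h$ is positive definite unless each factor contributes, the candidates for $G$ are $SU(2)$ (so $\dim N = 4$) and $SU(2)\times SU(2)$ (so $\dim N = 1$, i.e. $N=\bR$). The case $G$ of dimension $7$ is excluded because there is no $7$-dimensional semisimple Lie algebra with positive-definite Killing form (the lowest-dimensional simple algebras above $\mathfrak{su}(2)$ have dimensions $6$, $8$, $\dots$), and genuinely one must also rule out $G=SU(2)\times SU(2)\times SU(2)$ by dimension and $G=\bR^k\times(\text{semisimple})$ reducing to the $\dim G<7$ cases.

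Next I would analyse the $G_2$ structure on each splitting. The $\h\nabla$-parallel $3$-form $\varphi$ decomposes according to the product $N\times G$: writing $T_pM = T_pN\oplus T_pG$, the restriction of $\varphi$ to the $G$-factor must be a $\h\nabla$-parallel $3$-form there, hence (up to normalization) proportional to the bi-invariant $3$-form built from the structure constants, i.e. to $H$ itself. The compatibility of $\varphi$ with the metric and with the cross-product identities forces the dimension of the $G$-factor to be $3$, since the only way a $3$-form on a $3$-space can be part of a $G_2$-structure decomposition compatible with the metric is as a volume-type form on that $3$-space; the complementary $4$-space $N^4$ must then carry the induced structure. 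For $G=SU(2)$ this is automatic; for $G=SU(2)\times SU(2)$ one uses that $\varphi$ restricted to the $6$-dimensional $G$ cannot be the bi-invariant $3$-form of $SU(2)\times SU(2)$ (which is decomposable into two volume forms of $3$-spaces) without violating the $G_2$ cross-product algebra, so $\varphi$ must pick out a $3$-dimensional subspace of $T_pG$ and pair the remaining $3$-dimensional subspace with the $\bR$ factor — forcing the splitting $\bR\times SU(2)\times SU(2)$ with a specific distribution of $\varphi$. In the $G=SU(2)$ case, the orthogonal $4$-manifold $N^4$ inherits a $\h\nabla^N$-parallel structure; since $\iota_V H=0$ on $N$, the connection on $N$ is the Levi-Civita connection, and the residual $G_2$ data cuts the holonomy of $N^4$ down to $Sp(1)=SU(2)$, i.e. $N^4$ is hyper-K\"ahler (flat $\bR^4$ being the degenerate case).

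The key steps in order: (1) invoke Theorem \ref{th:gdec} to get $M^7 = N\times G$ locally, with $G$ simply connected semisimple; (2) classify semisimple $G$ of dimension $\leq 7$ with positive-definite Killing form — only $SU(2)$ and $SU(2)\times SU(2)$; (3) decompose $\varphi$ along the splitting and use the $G_2$ cross-product/associativity identities to pin down how $\varphi$ distributes across the factors, showing the semisimple part always contributes exactly a $3$-dimensional piece carrying (a multiple of) $H$; (4) identify the complementary $4$-manifold, using $\iota_V H=0$ and the reduction of structure group, as hyper-K\"ahler; (5) for completeness and simple-connectedness, upgrade "locally isometric" to "globally isometric" via the de Rham theorem, exactly as in Theorem \ref{th:gdec}. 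The main obstacle I expect is step (3): showing rigorously that the parallel $G_2$ form must align with the metric splitting in only the stated way, i.e. that no "tilted" embedding of $\mathfrak{su}(2)\oplus\mathfrak{su}(2)$ (or of a single $\mathfrak{su}(2)$) inside the $G_2$ frame is compatible with $\h\nabla\varphi=0$. This is an algebraic problem at a point — one must check which $3$-dimensional and $6$-dimensional subalgebras of $\mathfrak{so}(7)$ arising as $\mathrm{hol}(\h\nabla^G)$-type data are stabilized consistently with a $G_2$-generic $3$-form — and it is the analogue of the "splitting of the complex structure" difficulty mentioned in the introduction for the KT/CYT/HKT cases, here handled via the cross-product rather than an endomorphism.
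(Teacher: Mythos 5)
Your steps (1), (2) and (5) match the paper exactly: invoke Theorem \ref{th:gdec}, list the simply connected semisimple groups of dimension at most $7$ (only $SU(2)$ and $SU(2)\times SU(2)$), and upgrade to a global statement by de Rham. For $G=SU(2)\times SU(2)$ you overcomplicate matters: since the theorem only asserts the local \emph{isometry} type, it suffices that $N$ is then $1$-dimensional, hence $\bR$; no analysis of how $\varphi$ distributes over the factors is needed, and your intermediate claim that the cross-product ``forces the dimension of the $G$-factor to be $3$'' is false as stated (the $6$-dimensional $G$ is precisely the first alternative of the theorem).

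The genuine gap is in your steps (3)--(4) for $G=SU(2)$, which you yourself flag as ``the main obstacle'' but do not resolve. You assert without proof that $\varphi$ restricted to $T_pG$ is a multiple of $H$ and that the residual data makes $N^4$ hyper-K\"ahler. The paper fills this in two sub-cases according to whether $T_pSU(2)$ is an associative $3$-plane. If it is, one writes $\varphi=\lambda^1\wedge\lambda^2\wedge\lambda^3+\lambda^r\wedge\omega_r$; the absence of a component $\varphi_{N^4}$ follows because the isotropy group of an associative $3$-plane in $G_2$ is $SO(4)$ and a nonzero $\varphi_{N^4}$ would break that invariance, and then Cartan--Bryant positivity of $\varphi$ forces the three $\nabla$-parallel $2$-forms $\omega_r$ on $N^4$ to be (anti-)self-dual, which is what makes $N^4$ hyper-K\"ahler. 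If $T_pSU(2)$ is \emph{not} associative, then $\iota_{L_r}\iota_{L_s}\varphi$ has a nonzero $\nabla^{N^4}$-parallel component along $N^4$, so $N^4$ splits off a flat line; one then uses that strong $G_2$ manifolds are steady generalised Ricci solitons with $\h\nabla$-parallel Lee form, hence $\h{\mathrm{Ric}}=0$, and in three dimensions vanishing Ricci forces the remaining factor to be flat, reducing this case to $N^4=\bR^4$. Your proposal contains neither the $SO(4)$-isotropy argument, nor the positivity argument identifying the hyper-K\"ahler structure, nor any treatment of the non-associative case, so as written it does not establish the theorem.
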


\begin{proof}

We have already demonstrated that $M^7$ is locally isometric to $N\times G$ with the torsion $H$ to have support only on $G$, where $G$ is a semisimple group. As mentioned, this is a consequence of $\h\R=0$, Lemma \ref{le:group} and the assumption that the decomposition (\ref{thdcomp}) is of NSZ-type.
  The simply connected  semisimple groups up to dimension $7$  are $SU(2)$ and $SU(2)\times SU(2)$ and so $G$ can be identified with one of these. If  $G=SU(2)\times SU(2)$, then $N=\bR$. Thus $M$ must be locally isometric to  $\bR\times SU(2)\times SU(2)$.  As it will be explicitly explained  in the remarks below, this  space admits a strong $G_2$ structures with torsion.

It remains to investigate the  $G=SU(2)$ case.  There are two possibilities to be considered depending on whether or not the tangent space of $SU(2)$ at each point is an associative 3-plane.  It is sufficient for the tangent space at the origin of $SU(2)$ to be associative as it is spanned by the left invariant vector fields $\{L_r: r=1,2,3\}$ on $SU(2)$ and $\varphi$ is $\h\nabla$-covariantly constant. Then, it will be associative at every point as the structure is preserved by parallel transport.

To begin, suppose that the tangent space of $SU(2)$ is associative.  This means that $\iota_{L_r}\iota_{L_s}\varphi\in T_p^*SU(2)$ for every $p\in M^7$.  In particular, $(\iota_{L_r}\iota_{L_s}\varphi)_{N^4}=0$.  Then, the fundamental form $\varphi$ can be  written, up to an overall constant scale,  as
\be
\varphi=\lambda^1\wedge \lambda^2\wedge\lambda^3+\lambda^r\wedge \omega_r~,
\label{g2f1x}
\ee
where $\{\lambda^r: r=1,2,3\}$ is the left invariant co-frame on $SU(2)$ associated to $\{L_r: r=1,2,3\}$ and $\{\omega_r:r=1,2,3\}$ are 2-forms with $\iota_{L_s}\omega_r=0$. Since $\varphi$ is $\h\nabla$-covariantly constant and the same applies for $\lambda^r$, $\omega_r$ are $\nabla$-covariantly constant and their domain is $N^4$. On the assumptions made about the associative property of $SU(2)$, the expression for $\varphi$ in (\ref{g2f1x}) is the most general one apart for not having components along $N^4$, i.e. it does not have a non-vanishing term $\varphi_{N^4}$. To see why this is vanishing, note that the isotropy group of an associative 3-plane in $G_2$ is $SO(4)$. As $\varphi$ is $G_2$ invariant, it is also $SO(4)$ invariant. But if there were a non-vanishing component $\varphi_{N^4}$ of $\varphi$, it would have reduced the isotropy group of the associative 3-plane to a subgroup of $SO(4)$.  This is a contradiction.  Therefore, such a component should vanish $\varphi_{N^4}=0$. This argument has been used before in \cite{bryant, bryantsalamon} for $G_2$ manifolds.

The forms $\omega_r$ are further restricted. This is because for $\varphi$ to be the fundamental form of a $G_2$ structure, it has also to satisfy the positivity condition established by Cartan and put in the form below in \cite{bryant}.  This means that the quadratic form
\be
B(X, Y)\,d\mathrm{vol} (M^7)=\frac{1}{6}\, \iota_X\varphi\wedge \iota_Y\varphi\wedge \varphi~,
\ee
must be positive definite depending on the choice of orientation of $M^7$. Applying this criterion to (\ref{g2f1x}), it  implies that $\omega_r$ are either self-dual or anti-self-dual forms on $N^4$. In either case, $N^4$ must be a hyper-K\"ahler manifold as in addition these three 2-forms are $\nabla^{N^4}$-covariantly constant. Thus, $M^7$ is locally isometric to $N^4\times SU(2)$, where $N^4$ is a hyper-K\"ahler manifold. Such manifolds   admit a $G_2$ structure with torsion.

Alternatively, suppose that $\iota_{L_r}\iota_{L_s}\varphi_{N^4}\not=0$.  In such a case $\iota_{L_r}\iota_{L_s}\varphi_{N^4}$ is $\nabla^{N^4}$-covariantly constant. Therefore $N^4$ is locally isometric $\bR\times \tilde N^3$. However, as we have explained, all $G_2$ manifolds with torsion that satisfy the assumption of the theorem must have  $\h\R=0$. Applying this to the case at hand, the Ricci tensor  of $\tilde N^3$ vanishes, $\R^{\tilde N^3}=0$. As for 3-dimensional manifolds the curvature is expressed in terms of the Ricci tensor, $N^3$ is flat. Therefore, $M^7$ must be locally isometric to $\bR^4\times SU(2)$. As $\bR^4$ is a hyper-K\"ahler manifold, this case  is included  in the previous case where the tangent space of $SU(2)$ was associative. Moreover, if $M^7$ is simply connected and complete, then $M^7=SU(2)\times SU(2)\times \bR$ or $M^7=N^4\times SU(2)$ globally as a consequence of the de Rham theorem.
\end{proof}

\begin{remark}
As the only 4-dimensional compact hyper-K\"ahler manifolds are $T^4$ and $K_3$, compact strong $G_2$ manifolds include  $U(1)\times SU(2)\times SU(2)$, $T^4\times SU(2)$ and $K_3\times SU(2)$. More compact examples can be  constructed from those after an identification with a discrete group of isometries that preserves the $G_2$ structure.
\end{remark}

\begin{remark}
The $G_2$ structure on $N^4\times SU(2)$, $U(1)\times SU(2)\times SU(2)$ and $T^4\times SU(2)$ is not unique even after fixing the orientation of $M^7$. We shall not go too deep into detail. The argument is similar to that of \cite{gpew} that constructed four possible HKT structures on $U(1)\times SU(2)$ separated into two pairs by the choice of orientation.   A $G_2$ structure on $U(1)\times SU(2)\times SU(2)$ can be constructed using one of the HKT structures on the subspace $U(1)\times SU(2)$  of $U(1)\times SU(2)\times SU(2)$ viewed as an HKT manifold. If the Hermitian forms of $U(1)\times SU(2)$ are $\omega_r$, that can be either left- or right- invariant, then
\be
\varphi=\lambda^1\wedge \lambda^2\wedge\lambda^3+\lambda^r\wedge \omega_r~,
\label{g2f2x}
\ee
where $\{\lambda^r: r=1,2,3\}$ is either the left- or right- invariant co-frame on the remaining $SU(2)$ subspace of $U(1)\times SU(2)\times SU(2)$.
A similar result can be obtained for $T^4\times SU(2)$ as well as their universal covers $\bR\times SU(2)\times SU(2)$ and $\bR^4\times SU(2)$.

\end{remark}

\subsection{$\mathrm{Spin}(7)$}

Let $(M^8, g, H)$ be a manifold with torsion the closed 3-form $H$ such that the holonomy of $\h\nabla$ connection is contained in $\mathrm{Spin}(7)$ -- see \cite{si2} for a description of the geometry.
Given an orthonormal co-frame $\{ e^0, e^a; a=1,\dots, 7\}$ on $M^8$, a normal form of the fundamental  $\mathrm{Spin}(7)$ 4-form $\phi$, or Cayley form,  is
\be
\phi=e^0\wedge \varphi+{}*\varphi~,
\label{fspin7form}
\ee
where $\varphi$ is a  fundamental $G_2$ form and ${}^*\varphi$ is its dual.  In an adapted orthonormal co-frame $\varphi=e^{123}+ e^{145}+e^{167}+ e^{274}+e^{256}+e^{367}+e^{345}$, where $e^{123}=e^1\wedge e^2\wedge e^3$ and similarly for the rest of the components.

\begin{theorem}\label{cor:spin7}
Let $(M^8, g, \phi, H)$ be a strong $\mathrm{Spin}(7)$ manifold with torsion  that satisfies the assumptions of Theorem \ref{th:gdec} and with a decomposition (\ref{thdcomp})   of NSZ-type.  Then, $M^8$ is locally isometric to
$SU(3)$, $\bR^2\times SU(2)\times SU(2)$ or to $\bR\times N^4 \times SU(2)$, where $N^4$ is a hyper-K\"ahler manifold. Moreover, if $M^8$ is complete and simply connected, $M^8$ is globally isometric to these manifolds.
\end{theorem}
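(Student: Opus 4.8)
The plan is to mimic the structure of the proof of Theorem~\ref{cor:g2}, now applied to the $\mathrm{Spin}(7)$ case, using Theorem~\ref{th:gdec} as the starting point. First I would invoke Theorem~\ref{th:gdec}: since $H$ is closed and $\h\nabla$-parallel and $H\neq 0$, the manifold $M^8$ is locally isometric to $N\times G$ where $G$ is a simply connected semisimple group whose structure constants are $H$, and $\iota_V H=0$ for $V\in T_pN$. The simply connected semisimple groups of dimension at most $8$ are $SU(2)$, $SU(2)\times SU(2)$ and $SU(3)$. If $G=SU(3)$, then $\dim G=8$ forces $N$ to be a point and $M^8$ is locally isometric to $SU(3)$, which admits a $\mathrm{Spin}(7)$ structure with torsion (indeed it carries the HKT structure of Corollary~\ref{cor:hktx}, and an HKT structure on an $8$-manifold induces a $\mathrm{Spin}(7)$ structure). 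If $G=SU(2)\times SU(2)$, then $\dim N=2$; the Lee-form/generalised-Ricci-soliton argument (the Cayley form $\phi$ is $\h\nabla$-parallel, hence so is the Lee form $\theta$, hence $\h\R=\h\nabla\theta=0$) forces $N^2$ to be Ricci-flat and hence flat, so $N^2=\bR^2$ and $M^8$ is locally isometric to $\bR^2\times SU(2)\times SU(2)$.

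The substantive case is $G=SU(2)$, where $\dim N=5$. I would identify $T_pG$ with the left-invariant vector fields $\{L_r\}_{r=1,2,3}$ on $SU(2)$, which are $\h\nabla$-covariantly constant. As in Theorem~\ref{cor:g2}, one distinguishes cases according to how the Cayley form $\phi$ sees the splitting $T_pM^8=T_pN^5\oplus T_pG$. The cleanest route is to contract $\phi$ with the three left-invariant vector fields: $\iota_{L_1}\iota_{L_2}\iota_{L_3}\phi$ is a $\nabla$-covariantly constant $1$-form on $M^8$, so either it vanishes or it splits off a flat $\bR$ factor from $N^5$. In fact I expect the key step is to show that the $3$-plane $T_pG$ must be contained in a Cayley-calibrated way inside $T_pM^8$ compatible with $\phi$: writing $\phi=e^0\wedge\varphi+{}^*\varphi$ in a normal frame, one shows the $SU(2)$ directions can be aligned so that $\{L_r\}$ span three of the directions appearing in $\varphi$ and the complementary $4$-plane together with the extra $e^0$ direction carries the remaining structure. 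Using the $\mathrm{Spin}(7)$-invariance of $\phi$ together with the fact that the isotropy group of the chosen subspaces must be respected (the analogue of the $SO(4)$-isotropy argument of \cite{bryant, bryantsalamon} used in Theorem~\ref{cor:g2}), one rules out unwanted mixed components, and concludes that $N^5$ is locally isometric to $\bR\times N^4$ with $N^4$ carrying three $\nabla$-covariantly constant (anti-)self-dual $2$-forms forced by the positivity/nondegeneracy of $\phi$. Hence $N^4$ is hyper-K\"ahler and $M^8$ is locally isometric to $\bR\times N^4\times SU(2)$.

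Throughout the $SU(2)$ case I would also deploy the generalised-Ricci-soliton identity $\h\R=\h\nabla\theta=0$ to kill any residual curvature on the low-dimensional factors: in particular, whenever the analysis produces a $3$-dimensional factor $\tilde N^3$ inside $N^5$, its Ricci tensor vanishes and hence (being $3$-dimensional) it is flat, collapsing that subcase into the $\bR^4\times SU(2)$ form already covered by taking $N^4=\bR^4$. This is exactly the mechanism used at the end of the proof of Theorem~\ref{cor:g2}, and I expect it transfers verbatim. Finally, each of the three model spaces $SU(3)$, $\bR^2\times SU(2)\times SU(2)$, $\bR\times N^4\times SU(2)$ must be checked to actually admit a strong $\mathrm{Spin}(7)$ structure with $\h\nabla$-parallel torsion: for $SU(3)$ via its HKT structure; for the others by building $\phi=e^0\wedge\varphi+{}^*\varphi$ from the $G_2$ structures exhibited in Theorem~\ref{cor:g2} (take the extra $\bR$ or $U(1)$ factor to supply $e^0$, and the $G_2$ factor $\bR\times SU(2)\times SU(2)$ or $N^4\times SU(2)$ to supply $\varphi$). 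The global statement then follows from the de Rham theorem, exactly as in Theorem~\ref{th:gdec} and Theorem~\ref{cor:g2}.

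The main obstacle I anticipate is the linear-algebra bookkeeping in the $G=SU(2)$ case: unlike $G_2$, where $\varphi$ is a $3$-form and the $SU(2)$ directions span a putative associative $3$-plane with a clean $SO(4)$ isotropy, here $\phi$ is a $4$-form on an $8$-manifold and the distinguished subspace $T_pG$ is only $3$-dimensional, so it does not by itself define a Cayley $4$-plane; one must track how $T_pG$ sits relative to both $e^0\wedge\varphi$ and ${}^*\varphi$ and argue that $\mathrm{Spin}(7)$-invariance plus $\h\nabla$-parallelism forces the alignment claimed. I would handle this by choosing the orthonormal frame adapted to the eigenspace decomposition of $h=\iota H\otimes\iota H$ (which is exactly $T_pG$ on the nonzero eigenspace), then decomposing $\phi$ into its components of bidegree with respect to $T_pG\oplus T_pN^5$ and eliminating components term by term using the residual holonomy constraint coming from Theorem~\ref{th:gdec}.
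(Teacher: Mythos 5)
Your proposal follows essentially the same route as the paper: invoke Theorem \ref{th:gdec} and enumerate the semisimple groups up to dimension 8, flatten the 2-dimensional factor in the $SU(2)\times SU(2)$ case via the $\h\R=\h\nabla\theta=0$ soliton argument, and in the $G=SU(2)$ case set $e^0=\iota_{L_1}\iota_{L_2}\iota_{L_3}\phi$ to split $N^5$ as $\bR\times\tilde N^4$ and reduce to the $G_2$ theorem applied to $\tilde N^4\times SU(2)$. The paper handles the alignment bookkeeping you worry about by citing the nondegeneracy properties of the triple product defined by the Cayley form (so that $e^0$ is a unit $\nabla$-parallel 1-form on $N^5$ and $\phi=e^0\wedge\varphi+{}^{*_7}\varphi$), which is exactly the mechanism you sketch.
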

\begin{proof}

All strong $\mathrm{Spin}(7)$ manifolds with torsion are steady generalised Ricci solitons. One can show, as in the $G_2$ case previously, that $\h\R=\h\nabla\theta$, where $\theta$ is the Lee form. $\theta$ can be expressed in terms of $H$ and the fundamental form $\phi$. As $M^8$ satisfies the assumptions of Theorem \ref{th:gdec}, $H$ is $\h\nabla$-covariantly constant and so $\h\R=0$. Then, the results of Lemma \ref{le:group} can be adapted to this case. This together with the assumption that $M^8$ is of NSZ-type in the decomposition (\ref{thdcomp}), one concludes that $M^8$ can locally be decomposed as $N\times G$, where $N$ is a Riemannian manifold with $H\vert_N=0$ and $G$ is a compact semi-simple group equipped with a parallelising connection.
 The semisimple groups up to dimension 8 are $SU(3)$, $SU(2)\times SU(2)$ and $SU(2)$.

It is straightforward to construct a left-invariant  $\mathrm{Spin}(7)$ fundamental 4-form on $SU(3)$. For this, we write $\phi$ in a adapted orthonormal co-frame and then  replace it   with the left-invariant co-frame on $SU(3)$. This proves that $SU(3)$ admits a $\mathrm{Spin}(7)$ structure. But of course the holonomy of the $\h\nabla$ connection is trivial as the manifold is parallelisable.

The next possibility is that $M^8$ is locally isometric to $N^2\times SU(2)\times SU(2)$. To continue, we use that the Ricci tensor $\h R$ of $M^8$ vanishes as established above.  This implies that the Ricci tensor $\R$ of $N^2$  vanishes as well.  The Riemann tensor of a 2-dimensional manifold can be expressed in terms of the Ricci tensor.   As a consequence of this,  $N^2$ is flat and so $M^8$ is locally isometric to $\bR^2\times SU(2)\times SU(2)$.  Writing $\bR^2\times SU(2)\times \ SU(2)$ as $\bR\times \big(\bR\times SU(2)\times SU(2)\big)$, a $\mathrm{Spin}(7)$ structure can be constructed on this space by using (\ref{fspin7form}) and  utilizing the $G_2$ structure on $ \bR\times SU(2)\times  SU(2)$ as described in (\ref{g2f2x}), where $e^0=dv$ and $v$ is the coordinate of the first $\bR$ subspace.

It remains to investigate the case that $M^8$ is locally isometric to $N^5\times SU(2)$. Choosing an  orthonormal basis on $SU(2)$  of a left invariant vector fields $\{L_r: r=1,2,3\}$,  it is known that $\iota_{L_1} \iota_{L_2}\iota_{L_3} \phi\not=0$. As the triple product defined by $\phi$ is compatible with the metric $g$ and $\{L_r: r=1,2,3\}$ is an orthonormal basis, then $\iota_{L_1} \iota_{L_2}\iota_{L_3} \phi_p\in T^*_p N^5\subset T_pM^8$, for $p\in M^8$ and moreover it has length 1.  This follows from the general properties of non-degenerate triple products and in particular that defined  by the Cayley form, see e.g. eqn (14) in \cite{krasnov} for the equivalent properties of triple products on vector spaces and  in \cite{salamon3}. Setting $e^0\equiv \iota_{L_1} \iota_{L_2}\iota_{L_3}\phi$, $e^0$ is $\nabla^{N^5}$-covariantly constant and as a result $N^5$ locally splits to $\bR\times \tilde N^4$. Furthermore, $\phi$ can be written as $e^0\wedge \varphi+{}^{*_7}\varphi$,  where $\varphi$ is a
 fundamental $G_2$ form on $ \tilde N^4\times SU(2)$ and the duality operation is taken on $ \tilde N^4\times SU(2)$; see e.g. Theorem 6 in \cite{krasnov} for the linear algebra equivalent statement.  Then, our theorem follows by applying the argument made in Theorem \ref{cor:g2} on the manifold $\tilde N^4 \times SU(2)$ after we consider it now as a $G_2$ manifold with fundamental form $\varphi$.
\end{proof}

\begin{remark}
 As the only  4-dimensional compact hyper-K\"ahler manifolds are $T^4$ and $K_3$, compact strong $\mathrm{Spin}(7)$ manifolds include $SU(3)$, $T^2\times SU(2)\times SU(2)$, $T^5\times SU(2)$ and $S^1\times K_3\times  SU(2)$. Many more compact examples   are constructed from those after an identification of the above geometries with a discrete group of isometries that preserves the $\mathrm{Spin}(7)$ structure. The $\mathrm{Spin}(7)$ structure with torsion on $T^2\times SU(2)\times SU(2)\times T^2$ and  $T^5\times SU(2)$ is not unique and several can be constructed in a way similar to that described in more detail in the $G_2$ case above.
\end{remark}

\subsection{Geometric structures and restrictions on torsion}

Before we proceed further, let us summarise the outcome of the results we have obtained so far. It is clear  that to construct compact strong CYT, HKT, $G_2$ and $\mathrm{Spin}(7)$ geometries with torsion, which  that are not locally products $N\times G$ with  $G$ a group,  the restriction that we have imposed to $H$ to be  $\h\nabla$-parallel must be either  weaken or removed altogether from the assumptions. In fact, this point could have been made from the very beginning for general manifolds with torsion a $3$-form as a consequence of Theorem \ref{th:gdec}.

There are two alternative strategies to weaken the assumptions that have been made so far to investigate the geometries with torsion a $3$-form.  Both are related to the investigation of generalised  steady Ricci solitons. These are geometries
that arise in the investigation of the properties of  generalised  Ricci flow.  Here, we shall comment on one of them. The other is used to show that generalised steady Ricci solitons admit certain symmetries that can be utilised to determine their structure -- see the method employed to determine the structure of $8$-dimensional HKT manifolds in the second part of the paper. To begin, let us recall the definition of generalised steady Ricci solitons, see \cite{GFS} for a review.

\begin{definition}\label{def:rsol}
The manifold $(M, g, H)$ is a generalised steady Ricci soliton, iff  it  satisfies the equation
\be
\h \R=\h\nabla V^\flat~,
\label{steadycon}
\ee
for some vector field $V$.
\end{definition}

\begin{remark}

 A consequence of the Bianchi identity in (\ref{bianchi}) and the restrictions on the holonomy of $\h\nabla$ is that  strong CYT, HKT, $G_2$ and $\mathrm{Spin}(7)$ manifolds are generalised steady Ricci solitons.  In such a case, $V^\flat$ is identified, after an appropriate numerical normalisation,  with the Lee form, $\theta$,  of these geometries. The Lee form is defined as $\theta=c\, {}^*(\phi\wedge {}^*\delta \phi)$, where $\phi$ is the fundamental form of the associated geometry, e.g. $\phi$ is the Hermitian form for the CYT geometry, $\delta$ is the adjoint of the exterior derivative $d$ and $c$ is an appropriately chosen constant.  In particular, after an appropriate choice of $c$,  it can be shown that  these geometries satisfy
\be
\h \R=\h\nabla \theta~.
\label{rtheta}
\ee
Therefore, all these geometries are  generalised steady Ricci solitons.
\end{remark}

 To proceed a  useful formula is the following:
\begin{prop}\label{prop:gsrs}
Let $(M^n, g, H)$ be a generalised steady Ricci soliton ($dH=0$), i.e. it  satisfies (\ref{steadycon}). Then
\be
\nabla^2 \Big(-\frac{1}{2} \h R+\frac{1}{12} H^2\Big)= \nabla_V \Big(\frac{1}{2} \h R+\frac{1}{12} H^2\Big)+\h \R^2 ~.
\label{steadyf}
\ee
Therefore, if  $M^n$ is compact and one of the functions $R$, $\h R$ or $H^2$ is constant, then the Ricci tensor $\h \R$ vanishes and $V$ is $\h\nabla$-covariantly constant.
\end{prop}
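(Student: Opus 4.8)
The plan is to establish the displayed identity \eqref{steadyf} first and then to read off the rigidity conclusion from it. Identity \eqref{steadyf} is the torsionful analogue of the classical Bochner identity $\Delta_f R=-2|\R|^2$ valid on a steady Ricci soliton $\R=\nabla^2 f$, and I would derive it by differentiating the function $-\tfrac12\h R+\tfrac1{12}H^2$ twice and substituting three ingredients. The first is the once-contracted Bianchi identity for $\h\nabla$: tracing Lemma~\ref{le:bianchi}, with $dH=0$ killing the last terms, expresses $\h\nabla^i\h\R_{ij}$ through $\h\nabla_j\h R$, through the contraction $\h\R^{kl}H_{klj}$, and through terms quadratic in $H$ and bilinear in $H$ and $\h\nabla H$. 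The second is the soliton equation \eqref{steadycon} itself: its symmetric trace gives $\h R=\mathrm{div}\,V$ (the torsion drops out of the divergence since $H$ is totally antisymmetric), and its antisymmetric part ties $\delta H$ to $\iota_V H$ -- for a gradient soliton $V=\nabla f$ this degenerates to $dV^\flat=0$ and $\delta H=\iota_{\nabla f}H$. The third is the Ricci identity for $\h\nabla$, needed to swap the two covariant derivatives falling on $V^\flat$ at the cost of a curvature contraction; this is precisely what produces the $\h\R^2$ on the right-hand side. Inserting these into $\nabla^2\big(-\tfrac12\h R\big)$ and into $\nabla^2\big(\tfrac1{12}H^2\big)$ -- the latter expanded as a multiple of $\nabla^i\big(H^{jkl}\nabla_iH_{jkl}\big)$ -- and collecting terms should yield \eqref{steadyf}.

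I expect the genuine obstacle to be exactly this last collection step. After the substitutions one is left with a batch of terms of schematic type $H\!\cdot\!\h\nabla H$ and $H\!\cdot\!H\!\cdot\!H$, and the content of the identity is that these either recombine into $\nabla_V\big(\tfrac1{12}H^2\big)$ or cancel outright; that cancellation rests entirely on $dH=0$ -- equivalently on the Bianchi identities of Lemma~\ref{le:bianchi} specialised to $dH=0$ -- together with the soliton constraint relating $\delta H$ and $\iota_VH$. Since this bookkeeping has already been carried out in \cite{gpew}, I would cite it rather than reproduce it, and take \eqref{steadyf} as the starting point for the corollary.

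For the rigidity conclusion, tracing the standard relation between the Ricci tensors of $\h\nabla$ and $\nabla$ (in which the torsion contributes a $-\tfrac14 H^2_{ij}$ term and a trace-free divergence term) shows that $\h R-R$ is a fixed nonzero multiple of $H^2$. Hence as soon as one of the functions $R$, $\h R$, $H^2$ is constant, both
\be
F:=-\tfrac12\h R+\tfrac1{12}H^2 \qquad\text{and}\qquad G:=\tfrac12\h R+\tfrac1{12}H^2
\ee
become affine functions of a single function, so $G=aF+b$ for constants $a,b$ depending only on the ambient data (explicitly $a=-1$ when $H^2$ is constant and $a=1$ when $\h R$ is constant; in the remaining case one checks the relevant coefficient is nonzero, so again $G=aF+b$ with $a$ a finite constant). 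Substituting $G=aF+b$ into \eqref{steadyf} turns it into
\be
\big(\nabla^2-a\,\nabla_V\big)F=\h\R^2\geq 0~.
\ee
On the Riemannian manifold $M$ the operator $\nabla^2-a\nabla_V=g^{ij}\nabla_i\nabla_j-aV^i\nabla_i$ is second-order elliptic with positive-definite principal part and no zeroth-order term, so $F$ is a subsolution; since $M$ is compact, $F$ attains its maximum on each connected component, and Hopf's strong maximum principle forces $F$ to be locally constant. Consequently $\h\R^2\equiv 0$, i.e. $\h\R=0$, and \eqref{steadycon} reduces to $\h\nabla V^\flat=0$, so $V$ is $\h\nabla$-covariantly constant. (Merely integrating \eqref{steadyf} over $M$, using $\int_M\nabla^2(\cdot)=0$ and $\mathrm{div}\,V=\h R$ with $\int_M\h R=0$, only gives $\int_M\h\R^2=\tfrac12\int_M\h R^2$ in the representative case and does not by itself close the argument; it is the maximum principle that does, and it also handles the case when $V$ is not a gradient.)
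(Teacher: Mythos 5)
Your proposal is correct and is essentially the paper's own treatment: the paper gives no proof of \eqref{steadyf} either, deferring it (and the rigidity statement) entirely to \cite{gpew}, exactly as you do for the identity. The one piece you actually supply — deducing the corollary from \eqref{steadyf} — is sound: since $\h R=R-\tfrac14 H^2$, constancy of any one of $R$, $\h R$, $H^2$ makes $G=\tfrac12\h R+\tfrac1{12}H^2$ an affine function $aF+b$ of $F=-\tfrac12\h R+\tfrac1{12}H^2$ (with $a=-1$, $1$, $-1/5$ in the three cases), so \eqref{steadyf} reads $(\nabla^2-a\nabla_V)F=\h\R^2\geq 0$ and the strong maximum principle on compact $M$ forces $F$ to be constant, whence $\h\R=0$ and, by \eqref{steadycon}, $\h\nabla V^\flat=0$; this is the standard argument and correctly fills in the step the paper leaves to the reference.
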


The proof of this formula and the statement of the proposition above was given in \cite{gpew}  -- in fact a more general formula was demonstrated there that applies to both expanding and shrinking generalised Ricci solitons.

\begin{remark}
Note that the formula (\ref{steadyf}) does not only apply to manifolds with a special or exceptional geometric structure but to all manifolds that satisfy the generalised steady soliton condition (\ref{steadycon}). A similar statement has been made in \cite{sins} for gradient generalised Ricci solitons using a formula proven in Proposition 4.33 in \cite{GFS}, i.e. for generalised Ricci solitons that satisfy (\ref{steadyf}) with $V^\flat=-2d\Phi$ with $\Phi$ a function on $M^n$. It also generalises some results that have been previously proven  for  $G_2$ manifolds with torsion, see formula (7.78) in  \cite{sins2} that was demonstrated using details of the $G_2$ geometry of these manifolds.
\end{remark}

The above proposition suggests that one option to weaken the $\h\nabla$-covariant constancy condition on $H$ is to replace it with requiring that $H^2$ is constant. Clearly if $H$ is $\h\nabla$-covariantly constant, then $H^2$ is constant but the converse is not necessary true. For compact manifolds without boundary, the constancy of $H^2$  will imply from (\ref{steadyf})  that $R$ is constant and $\h \R=0$. In turn $\h \R=0$ gives that  $H$ is co-closed. Therefore, as $H$ is also closed, $H$ is harmonic. Thus for $H\not=0$, the third de Rham cohomology of $M^n$ should not vanish, $H^3_{\mathrm {dR}}(M^n)\not=0$. This is a topological condition on $M^n$.

However, the constancy condition on $H^2$ and the $\nabla$-covariant constancy of $H$ are not always independent, especially for harmonic forms. It is a consequence of the Bochner-Weitzenb\"ock formula that under certain conditions harmonic forms either  do not exist or  are $\nabla$-covariantly constant. For a 3-form $H$, this formula reads as
\be
(d\delta+\delta d) H=-\nabla^2 H+\mathcal {R} (H)~,
\label{bwf}
\ee
where $\delta$ is the adjoint of $d$ and
\be
\mathcal {R} (H)_{i_1i_2i_3}= \R_{i_1}{}^k H_{i_2i_3 k}-2 R_{i_1}{}^k{}_{i_2}{}^m H_{i_3 km}+\mathrm{cyclic}(i_1, i_2, i_3)~.
\ee
Taking the inner product of (\ref{bwf}) with $H$  and integrating over $M^n$, one finds that for a Harmonic form $H$, $dH=\delta H=0$, to be $\nabla$-covariantly constant, the integral of the inner product $(H, \mathcal {R} (H))$ over $M^n$ must vanish, where $M^n$ is taken to be compact without boundary. In addition, if the integral of $(H, \mathcal {R} (H))$ over $M^n$ is positive, then there are no harmonic 3-forms on $M^8$.  This is an additional restriction on the geometry of $M^8$ that  limits  the range of examples that can be constructed.  These results can be summarised as follows:

\begin{prop}\label{prop:Hdec}
Let $(M^n, g, H)$ be a compact  generalised steady Ricci solition with the (pointwise) length of $H$, $|H|$, constant or alternatively the Ricci scalar $R(g)$ of $g$ constant. Then, $\h\R=0$.  If, in addition, the integral of $(H, \mathcal {R} (H))$ over $M$ vanishes, then $M$ locally decomposes as a Riemannian manifold into a product $\times_\alpha N_\alpha$, where $\alpha$ labels the distinct eigenvalues of a symmetric bilinear $h$ constructed from $H$.  Moreover, if $M$ is simply connected and complete, then $M=\times_\alpha N_\alpha$ globally.
 \end{prop}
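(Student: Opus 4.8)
The plan is to combine Proposition \ref{prop:gsrs}, the Bochner--Weitzenb\"ock formula (\ref{bwf}), and the de Rham splitting argument already used in the proof of Theorem \ref{th:gdec}. First I would quote Proposition \ref{prop:gsrs} as a black box: a compact generalised steady Ricci soliton (with $dH=0$) on which one of $R(g)$, $\h R$ or $H^2$ (equivalently $|H|$) is constant has $\h\R=0$ and $V$ is $\h\nabla$-covariantly constant. Since $dH=0$ and $\h\R=0$, and since the skew-symmetric part of $\h\R$ is a nonzero multiple of $\delta H$, we get $\delta H=0$; hence $H$ is a harmonic $3$-form on $M^n$.

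Next I would feed this harmonic $H$ into (\ref{bwf}). As $(d\delta+\delta d)H=0$, the formula reads $\nabla^2 H=\mathcal{R}(H)$. Pairing with $H$ in $L^2$ and integrating by parts over the closed manifold yields
\[
\int_{M^n}|\nabla H|^2\, d\mathrm{vol}\;+\;\int_{M^n}\big(H,\mathcal{R}(H)\big)\, d\mathrm{vol}\;=\;0 .
\]
The decisive input is that positive-definiteness of the Levi-Civita curvature operator makes the Weitzenb\"ock curvature endomorphism $\mathcal{R}(H)$ nonnegative on $3$-forms, $\big(H,\mathcal{R}(H)\big)\ge 0$ pointwise --- this is the classical Bochner--Gallot--Meyer positivity estimate for the Weitzenb\"ock operator on $p$-forms; concretely one splits $\mathcal{R}(H)$ into its Ricci and full-Riemann pieces and checks that for a positive curvature operator the two contributions add with the correct sign. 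Both integrands above are then nonnegative and sum to zero, so $\nabla H=0$: the torsion is Levi-Civita parallel.

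Finally, with $\nabla H=0$ the symmetric bilinear $h_{ij}=\tfrac{1}{2} H_{ipq}H_j{}^{pq}$ is itself $\nabla$-parallel, and I would reproduce verbatim the holonomy-reduction step from the proof of Theorem \ref{th:gdec}: diagonalise $h$ at a point $p$, parallel-transport its eigenspace decomposition of $TM$ to a neighbourhood of $p$, conclude that the reduced holonomy of $\nabla$ is contained in $\times_\alpha SO(n_\alpha)$ with $n_\alpha$ the multiplicity of the distinct eigenvalue $\lambda_\alpha$ of $h$, and invoke the de Rham decomposition theorem to obtain the local isometry onto $\times_\alpha N_\alpha$, which is global when $M$ is simply connected and complete.

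The step I expect to be the main obstacle is the pointwise inequality $\big(H,\mathcal{R}(H)\big)\ge 0$: because the curvature term in (\ref{bwf}) mixes a Ricci contraction with a full Riemann contraction, ``positive curvature operator'' does not deliver this in one line, and one must either invoke the sharp Weitzenb\"ock positivity theorem for $p$-forms or carry out the index bookkeeping; a secondary point worth making precise is that the cleanest reading of ``positive-definite curvature'' here is positivity of the curvature operator on $\Lambda^2 TM$, and one should state the hypothesis that way.
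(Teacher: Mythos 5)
Your argument is correct and coincides with the paper's own: the proposition is stated without a separate proof environment, but the two paragraphs preceding it carry out exactly your steps --- Proposition \ref{prop:gsrs} gives $\h\R=0$ and hence $H$ harmonic, the Bochner--Weitzenb\"ock formula (\ref{bwf}) together with positive curvature then forces $\nabla H=0$, and the parallel bilinear $h$ plus the de Rham splitting of Theorem \ref{th:gdec} finishes. The pointwise nonnegativity of $(H,\mathcal{R}(H))$ that you flag as the delicate step is indeed the Gallot--Meyer positivity of the Weitzenb\"ock curvature term for a positive curvature operator, and the paper likewise leaves it implicit.
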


\section{Generalised Ricci solitons and   compact strong HKT manifolds}

\subsection{Generalised Ricci solitons and strong CYT structures}

Before, we prove a rigidity theorem for  compact strong HKT 8-manifolds, for completeness, let us summarise some results for strong CYT manifolds in relation to generalised Ricci solitons. The most promising avenue to pursue  examples of compact strong CYT and HKT   manifolds  (and also  $G_2$ and $\mathrm{Spin}(7)$ manifolds with torsion), especially for examples with low dimension,  is to use the generalisation of the
Perelman's theorem  as adapted  for the generalised Ricci flows in \cite{GFS}.  This states that all compact  generalised steady Ricci solitons are  generalised gradient Ricci solitons.
As we have already mentioned,  generalised steady Ricci solitons $(M^n, g, H)$ satisfy the equation (\ref{steadycon}),
while gradient generalised Ricci solitons satisfy the equation
 \be
\h{\mathrm{Ric}}_{ij}=-2\h\nabla_i\partial_j\Phi~,
\ee
where $\Phi$ is a function (dilaton) on $M^n$.  Therefore, the above theorem implies that there must exist  a $\h\nabla$-covariantly constant vector field $Y$, $\h\nabla Y=0$, such that
\be
Y=V+2d\Phi^\flat~,~~~\h\nabla Y=0~.
\ee
As a result, $Y$ is Killing and $\mathcal{L}_Y H=0$ for $H$ a closed 3-form.

It has already been mentioned that  manifolds with a connection $\h\nabla$ whose holonomy is included in $SU(k)$, $Sp(q)$, $G_2$ and $\mathrm{Spin}(7)$ are steady generalised Ricci solitons with $V=\theta^\flat$, where $\theta$ is the Lee form.  This is because they satisfy (\ref{rtheta}) as  a consequence that first Bianchi identity in (\ref{bianchi}) and the restriction on the holonomy of $\h\nabla$. If the manifold is compact, as stated above, then these geometries are also gradient generalised Ricci solitons. This means that the vector field
\be
Y=\theta^\flat+2d\Phi^\flat~,
\ee
is $\h\nabla$-covariantly constant.  Thus for $Y\not=0$, these manifolds admit a Killing vector field that in addition $\iota_Y H=dY^\flat$. The last condition implies that $\mathcal{L}_Y H=0$ as $H$ is closed. This can be exploited to construct examples.

 Let $(M^{2k}, g, H, I)$  be a compact manifold with a strong CYT structure. Then,  either $Y=0$ or $M^{2k}$ admits a second $\h\nabla$-covariantly constant vector field $-IY$ in addition to $Y$.  If $Y=0$, and so $\theta=-2 d\Phi$,  then $M^{2k}$ is conformally balanced.    It is known that compact conformally balanced strong CYT manifolds must  be Calabi-Yau with  $H=0$ and $\Phi$  constant \cite{sigp1}. However, if $Y\not=0$, $M^{2k}$  admits  two Killing vector fields that leave $H$ invariant. The function $\Phi$ is also invariant, $Y\Phi=(-IY) \Phi=0$. It is clear that the holonomy of $\h\nabla$ reduces from $SU(k)$ to $SU(k-1)$. In fact, it turns out that both $Y$ and $-IY$ are holomorphic and commute. This has been used in \cite{abls} to prove some rigidity properties for 6-dimensional strong CYT manifolds.  However, the extension of such rigidity results beyond six dimensions remains an open question.

 \subsection{Principal bundles and strong HKT structures}

 Before we proceed to investigate the geometry of 8-dimensional HKT manifolds, we shall summarise in the Theorem \ref{th:pfibre} below the results of \cite{jggpa, jggpb, gpheterotic} on the geometry of HKT manifolds $(M^{4q}, g, H, I_r)$ that admit an action by a compact group $K$ such that the associated  vector fields $\{V_\alpha: \alpha=1, \dots, \mathrm{dim} K\}$ of the action are $\h\nabla$-covariantly constant and preserve the span of the hyper-complex structure $\{I_r:r=1,2,3\}$. As the vector fields $V$ are $\h\nabla$-covariantly constant, they do not have fixed points and the action of $K$ is almost free. For simplicity, let us assume that $K$ acts freely on $M^{4q}$ and so $(M^{4q}, B^{4(q-k)}, K)$ is a principal bundle
 with fibre group $K$ and base space $B^{4(q-k)}$ -- as $K$ preserves the span of the hyper-complex structure, it must have a dimension multiple of four. For a recent summary of the properties of HKT manifolds, see \cite{gpew}.

 \begin{theorem}\label{th:pfibre}
  Let $(M^{4q}, g, H, I_r)$ be a (not necessarily compact) manifold with a strong HKT structure that admits a free action by a compact Lie group $K$  on $M^{4q}$ that generates $\h\nabla$-covariant vector fields $\{V_\alpha: \alpha=1, \dots, \mathrm{dim}\, K=4k\}$ and preserves the span of the hyper-complex structure $\{I_r:r=1,2,3\}$. Then,
  $M^{4q}$ admits a principal bundle connection $\lambda^\alpha(X)\equiv g(X_\alpha, X)$ such that the metric $g$,  torsion $H$ and Hermitian structures $\{\omega_r : r=1,2,3\}$, with $\omega_r(X,W)=g(X, I_r W)$ decompose as
  \begin{align}
 &g=h^K_{\alpha\beta} \lambda^\alpha\otimes \lambda^\beta+ g^\perp~,~~~H=\mathrm{CS}(\lambda)+H^\perp~,
 \cr
 &\omega_r=\frac{1}{2}\,(\omega_r)^K_{\alpha\beta} \lambda^\alpha\wedge \lambda^\beta+\omega_r^\perp~,
 \label{hktdec}
 \end{align}
  where $\mathrm{CS}(\lambda)$ is the Chern-Simons 3-form of $\lambda$, $h^K_{\alpha\beta}=g(V_\alpha, V_\beta)$ and $g^\perp, H^\perp, \omega_r^\perp$ are the (horizontal) components of $g, H, \omega_r$ perpendicular  to the orbits of $K$, respectively. As $g^\perp$ and $H^\perp$ are invariant under the action of $K$ and transversal to the orbits, they are the pull-back of a metric $g^B$ and a 3-form $H^B$ of the base space $B^{4(q-k)}$ with the projecting map $\pi$.

  The group $K$ has structure constants $H^K$, $h^K(V_\alpha,  H^K(V_\beta, V_\gamma)) =H(V_\alpha, V_\beta, V_\gamma)=H_{\alpha\beta\gamma}$, and admits a left-invariant HKT structure associated with $(h^K, H^K, \omega_r^K)$, where both $h^K$ and $H^K$ are bi-invariant.

  Furthermore, as $\mathcal{L}_{V_\alpha} I_r=(\mathrm{B}_\alpha)^s{}_r I_s$, then $\{\mathrm{B}_\alpha: \alpha=1, \dots, 4k\}$ is a 3-dimensional representation of $K$,
  $[\mathrm{B}_\alpha, \mathrm{B}_\beta]=-\tilde H^K_{\alpha\beta}{}^\gamma \mathrm{B}_\gamma$ that in addition  satisfies $(I^K_r)^\beta{}_\alpha (\mathrm{B}_\beta)^s{}_r=-\epsilon_r{}^s{}_t (\mathrm{B}_\alpha)^t{}_r$ (no summation over $r$),  where $[V_\alpha, V_\beta]=- H^K_{\alpha\beta}{}^\gamma V_\gamma$.

  In addition, the curvature $\mathcal{F}$  of the connection $\lambda$ of the principal bundle $M^{4q}$ satisfies
  \be
  -\mathcal{F}_{\alpha ca} (I_r^\perp)^c{}_b+\mathcal{F}_{\alpha cb} (I_r^\perp)^c{}_a= (\mathrm{B}_\alpha)^s{}_r I^\perp_{sab}~
  \label{frestrict}
  \ee
  where an orthonormal co-frame  $\{e^a: a=1,\dots 4q-4k\}$ is chosen such that $g^\perp=\delta_{ab} e^a e^b$ and $\omega^\perp_r=\frac{1}{2} \omega^\perp_{rab} e^a\wedge e^b$.  The closure of $H$ implies that
  \be
   d H^B+h^K_{\alpha\beta} \mathcal{F}^\alpha\wedge \mathcal {F}^\beta=0~,
  \label{closx}
  \ee
 i.e. the Pontryagin class of the principal bundle $M^{4q}(B^{4(q-k)}, K)$ is trivial.
If $\mathrm{B}$ is the trivial representation, then the base space $B^{4(q-k)}$ is an HKT manifold otherwise $B^{4(q-k)}$ has a QKT structure. In the latter case, the associated bundle of $M^{4q}$ with respect to the representation $\mathrm{B}$ is identified with the vector bundle on $B^{4(q-k)}$ spanned by the three complex structures as a QKT manifold.
\end{theorem}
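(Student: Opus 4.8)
The plan is to extract everything from the single hypothesis $\h\nabla V_\alpha=0$, together with the strong HKT relations on $M^{4q}$, organising the bookkeeping by the number of \emph{vertical} (i.e.\ $\mathfrak{k}$-directed) legs carried by each tensor. \emph{Step 1 (the bundle and the metric).} Since $g$ and the $V_\alpha$ are all $\h\nabla$-parallel, the inner products $h^K_{\alpha\beta}=g(V_\alpha,V_\beta)$ are constant and each $V_\alpha$ is Killing (the torsion being a $3$-form). As the $V_\alpha$ generate the free $K$-action, $(M^{4q},B,K)$ is a principal bundle, their span is the Lie algebra $\mathfrak{k}$, and the identity recalled in the remark following theorem \ref{th:KTCYT}, $[V_\alpha,V_\beta]=(\iota_{V_\alpha}\iota_{V_\beta}H)^\flat$, identifies the structure constants with $H^K_{\alpha\beta\gamma}=H(V_\alpha,V_\beta,V_\gamma)$ (indices moved by $h^K$) and in particular shows $H$ has no component with two vertical and one horizontal leg. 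Setting $\lambda^\alpha=(h^K)^{\alpha\beta}V_\beta^\flat$ gives a principal connection, and the $g$-orthogonal splitting $TM=\mathcal{V}\oplus\mathcal{H}$ into the tangents to the orbits and their complement yields at once $g=h^K_{\alpha\beta}\lambda^\alpha\otimes\lambda^\beta+g^\perp$, with $g^\perp$, being $K$-invariant and horizontal, equal to $\pi^*g^B$.

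\emph{Step 2 (torsion, Hermitian forms, group data).} Decompose $H$ by vertical degree. The identity $\iota_{V_\alpha}H=\pm\,dV_\alpha^\flat$ (standard from $\h\nabla V_\alpha=0$) identifies the one-vertical part of $H$ with $h^K_{\alpha\beta}\lambda^\alpha\wedge\mathcal{F}^\beta$, where $\mathcal{F}^\alpha=d\lambda^\alpha+\tfrac12 f^\alpha{}_{\beta\gamma}\lambda^\beta\wedge\lambda^\gamma$ is the curvature, and forces the purely vertical part to be the Cartan $3$-form of $\mathfrak{k}$; together with the vanishing of the two-vertical part from Step 1, these assemble into $\mathrm{CS}(\lambda)$, whose differential is the Pontryagin $4$-form $h^K_{\alpha\beta}\mathcal{F}^\alpha\wedge\mathcal{F}^\beta$. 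The remaining horizontal part $H^\perp$ is horizontal and, since $\mathcal{L}_{V_\alpha}H=d\,\iota_{V_\alpha}H=0$ (using $dH=0$), also $K$-invariant, hence basic: $H^\perp=\pi^*H^B$. Running the same decomposition on the $\omega_r$, using that $\mathcal{V}$ is $\langle I_r\rangle$-invariant (which also forces $\dim K$ to be a multiple of $4$) kills the mixed term and gives $\omega_r=\tfrac12(\omega_r^K)_{\alpha\beta}\lambda^\alpha\wedge\lambda^\beta+\omega_r^\perp$. Finally, bi-invariance of $(h^K,H^K)$ is the total antisymmetry of $H^K$ (which is precisely $\mathrm{ad}$-invariance of $h^K$), and the HKT axioms for $(h^K,H^K,\omega_r^K)$ follow by restricting those of $M$ to $\mathcal{V}$.

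\emph{Step 3 (the representation $B_\alpha$, curvature restriction, closure).} Applying $\mathcal{L}_{V_\beta}$ to $\mathcal{L}_{V_\alpha}I_r=(B_\alpha)^s{}_r I_s$ and antisymmetrising, with $[\mathcal{L}_{V_\alpha},\mathcal{L}_{V_\beta}]=\mathcal{L}_{[V_\alpha,V_\beta]}$, yields $[B_\alpha,B_\beta]=-\tilde H^K_{\alpha\beta}{}^\gamma B_\gamma$, so $\{B_\alpha\}$ is a $3$-dimensional representation of $\mathfrak{k}$; substituting $I_rI_s=-\delta_{rs}\mathbf{1}+\epsilon_{rst}I_t$ into $\mathcal{L}_{V_\alpha}(I_rI_s)$ produces the compatibility $(I_r^K)^\beta{}_\alpha(B_\beta)^s{}_r=-\epsilon_r{}^s{}_t(B_\alpha)^t{}_r$. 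For (\ref{frestrict}) I would take $\mathcal{L}_{V_\alpha}$ of the strong HKT identity expressing $H$ through the $I_r$ and $\omega_r$, project onto $\mathcal{H}$, and substitute the decompositions of Step 2: the term $\mathcal{L}_{V_\alpha}\omega_r^\perp$ contributes both $(B_\alpha)^s{}_r\omega_s^\perp$ and, via the non-integrability of $\mathcal{H}$, the curvature $\mathcal{F}_\alpha$ contracted with $I_r^\perp$, which rearranges into (\ref{frestrict}). Then (\ref{closx}) is immediate: $0=dH=d\,\mathrm{CS}(\lambda)+\pi^*dH^B=h^K_{\alpha\beta}\mathcal{F}^\alpha\wedge\mathcal{F}^\beta+\pi^*dH^B$; since $dH^B$ is exact on $B$, the Pontryagin form is exact, so the Pontryagin class of the bundle vanishes.

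\emph{Step 4 (the base) and the main obstacle.} The triple $(g^B,H^B,\omega_r^\perp)$ is a $K$-invariant, transverse almost-hyper-Hermitian-with-torsion structure on the total space; if all $B_\alpha=0$ the $\omega_r^\perp$ descend individually to $B$, and the HKT condition on $M$ makes the descended almost complex structures integrable, so $B$ is HKT. If some $B_\alpha\neq0$, only the rank-$3$ span $\langle\omega_r^\perp\rangle$ descends, giving $B$ a QKT structure, and the corresponding rank-$3$ subbundle of $\Lambda^2 B$ is precisely the bundle associated to $M^{4q}$ through the representation $\{B_\alpha\}$. I expect the delicate point to be (\ref{frestrict}) together with this last statement: on the horizontal space one must disentangle the contributions of $\mathcal{L}_{V_\alpha}I_r$ (the rotation $B_\alpha$), of the bundle curvature $\mathcal{F}^\alpha$, and of the HKT torsion constraint, keeping careful track of which combinations are genuinely basic --- note that $\mathcal{F}^\alpha$ is only $K$-equivariant while $h^K_{\alpha\beta}\mathcal{F}^\alpha\wedge\mathcal{F}^\beta$ is $K$-invariant, which is exactly what makes (\ref{closx}) and the QKT structure on $B$ well posed.
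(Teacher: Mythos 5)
The paper does not actually prove this theorem: it is stated with a closing $\square$ and the proof is explicitly deferred to the reference \cite{gpheterotic} (``A more detailed discussion of the theorem and a proof can be found in \cite{gpheterotic}''), so there is no in-text argument to compare against. That said, your reconstruction is sound and is essentially the standard argument one would expect that reference to contain: everything flows from $\h\nabla V_\alpha=0$, which makes the $V_\alpha$ Killing with constant mutual inner products, gives $[V_\alpha,V_\beta]=(\iota_{V_\alpha}\iota_{V_\beta}H)^\flat$ and $\iota_{V_\alpha}H=-dV_\alpha^\flat$, and hence forces the vertical-degree decomposition of $g$, $H$ and $\omega_r$ (in particular the vanishing of the two-vertical--one-horizontal component of $H$ and of the mixed component of $\omega_r$, the latter using $I_r\mathcal{V}\subseteq\mathcal{V}$, which is part of the setup). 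The identification of the one-vertical and purely vertical pieces with $\mathrm{CS}(\lambda)$, the basic nature of $H^\perp$ from $\mathcal{L}_{V_\alpha}H=d\iota_{V_\alpha}H=0$, the representation property of $B_\alpha$ from $[\mathcal{L}_{V_\alpha},\mathcal{L}_{V_\beta}]=\mathcal{L}_{[V_\alpha,V_\beta]}$, and the derivation of (\ref{closx}) and the triviality of the Pontryagin class from $dH=0$ are all correct. The only place where your outline is genuinely thinner than a full proof is (\ref{frestrict}): the cleanest route is to compare the horizontal $2$-form part of $\mathcal{L}_{V_\alpha}\omega_r=d\iota_{V_\alpha}\omega_r+\iota_{V_\alpha}d\omega_r$ (which produces $(\omega_r)_{\alpha\beta}\mathcal{F}^\beta$ from the first term and the $H$-dependent term from the second, via $d\omega_r=\iota_{I_r}H$ for a KT structure) with $(B_\alpha)^s{}_r\,\omega_s^\perp$; your phrasing attributes the curvature contribution to $\mathcal{L}_{V_\alpha}\omega_r^\perp$ itself, which is not quite where it comes from, but the ingredients you list are the right ones and the bookkeeping closes. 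As you note, the identification of the adjoint/associated bundle with the span of the $\omega_r^\perp$ on the QKT base is then a restatement of (\ref{frestrict}) plus equivariance.
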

\hfill $\square $

\begin{remark}
Note that the components of $h^K, H^K, \omega^K$ and $B$ are constant in the co-frame $\{\lambda^\alpha, e^a: \alpha=1, \dots, \mathrm{dim} K; a=1,\dots, 4q-\mathrm{dim} K\}$  as $V_\alpha$ are $\h\nabla$-covariantly constant.  The curvature $\mathcal{F}$ as a 2-form has only horizontal components and (\ref{frestrict}) implies that it can be written  as
\be
\mathcal{F}=\mathcal{F}_{\mathfrak{sp}(q-1)}+\mathcal{F}_{\mathfrak{sp}(1)}~,
\ee
where we have used the decomposition of the space of 2-forms $\Lambda^2(\bR^{4q-4})$ under the $Sp(q-1)\cdot Sp(1)$ subgroup of $SO(4q-4)$.
The condition (\ref{frestrict}) restricts only the component $\mathcal{F}_{\mathfrak{sp}(1)}$. The component   $\mathcal{F}_{\mathfrak{sp}(q-1)}$ is $(1,1)$-form with respect to all complex structures and so the left hand side of the equation (\ref{frestrict}) vanishes when restricted on $\mathcal{F}_{\mathfrak{sp}(q-1)}$.
A more detailed discussion of the theorem and a proof can be found in \cite{gpheterotic}, where the CYT geometries are also treated.
\end{remark}

\begin{remark}

As the holonomy of $\h\nabla$ connection of the HKT manifold $(M^{4q}, g, H, I_r)$ is included in $Sp(q)$, the Ricci forms
\begin{equation}
\h\rho^r\equiv \frac{1}{4} \h R_{ij}{}^k{}_m (I_r)^m{}_k\, e^i\wedge e^j~,
\end{equation}
must vanish, $\h\rho^r=0$. Using eqn (4.25) in \cite{gpheterotic}, this gives a relation between the Ricci forms of the base space and the curvature $\mathcal{F}$.  In particular, one has that
\begin{equation}
(\h \rho^B)^r= Q^r_\alpha \mathcal{F}^\alpha~,~~~Q^r_\alpha\equiv h^K_{\alpha\beta}\, (\omega^B, \mathcal{F}^\beta)~.
\label{rhorf}
\end{equation}
This relates  the curvature of the base space and the curvature of the principal bundle. Observe also that in $Q$ only the $\mathcal{F}_{\mathfrak{sp}(1)}$ component of the curvature contributes.

It turns out that the converse is true. Suppose that $M^{4q}$ is constructed as a principal bundle, as described in Theorem   \ref{th:pfibre}, with fibre a group $K$ with an HKT structure and base space $B^{4(q-k)}$ that admits a QKT structure. Furthermore,  if the principal bundle connection $\lambda$ satisfies (\ref{frestrict}), the equation (\ref{closx}), related to the closure of $H$, holds and moreover (\ref{rhorf}) is also valid, then $M^{4q}$ admits a strong HKT structure.

\end{remark}

\begin{remark}
If the group $K$ does not act freely on $M^{4q}$ but its  action generates $\h\nabla$-covariantly constant vector field on $M^{4q}$ or  a Lie algebra action on $M^{4q}$ is generated by  $\h\nabla$-covariantly constant vector field with closed orbits, generically,  the base space $B^{4(q-k)}$  will be an orbifold.   The relation of geometries with skew-symmetric torsion that admit a group action to those of principal bundles has been known for sometime. For example, it has been observed in the classification of geometries of heterotic backgrounds in \cite{uggp} and reviewed in \cite{ugjggpx}.

\end{remark}

\begin{remark}

The relation between HKT and QKT manifolds has also been known for sometime. The first such examples have been in the context of homogeneous spaces investigated in \cite{op}. This followed the definition QKT manifolds and the investigation of their twistor spaces   in \cite{hopqkt}.  Further properties of QKT manifolds have been explored in \cite{siqkt}.

\end{remark}

 It is known that all three Lee forms of an HKT manifold $(M^{4q}, g, H, I_r)$ with respect to each of the complex structures $I_r$ are equal, so we set $\theta=\theta_1=\theta_2=\theta_3$. Strong HKT $(M^{4q}, g, H, I_r)$ manifolds are steady generalised Ricci solitons. If, in addition, $M^{4q}$ is  a gradient soliton, $\h{\R}=-2\h\nabla\Phi$, then there is an $\h\nabla$-covariant constant vector field $Y$ such that
 \be
 Y^\flat=\theta+2d\Phi~.
 \ee
 Such a manifold will admit another three $\h\nabla$-covariantly constant vector field $Y_r=- I_r Y$. It turns out that $Y$  is tri-holomorphic $\mathcal{L}_Y I_r=0$ and commutes with $Y_r$, $[Y, Y_r]=0$. While $Y_r$ are holomorphic with only the $I_r$ complex structure.  Assuming that the algebra of $\{Y_\alpha: \alpha=0,1, \dots, 3\}=\{Y, Y_r: r=1,2,3\}$ closes, it must be isomorphic to either $\oplus^4\mathfrak{u}(1)$ or to $\mathfrak{u}(1)\oplus \mathfrak{su}(2)$ -- this follows from the classification of compact Lie algebras of low dimension, i.e. those that admit a bi-invariant positive definite inner product.  In the latter case, the $\mathfrak{u}(1)$ subalgebra will be spanned by $Y$ while $\mathfrak{su}(2)$ will be spanned by the rest of vector field.

 Compact HKT manifolds $M^{4q}$ are also   generalised gradient Ricci solitons, i.e. they satisfy the equation
 \be
 \h {\R}+2\h\nabla\partial \Phi=0~,
 \ee
 for some scalar function $\Phi$.  This follows from the work of Perelman on Ricci solitons as generalised in \cite{GFS} for generalised Ricci solitons. Therefore, they admit $\h\nabla$-covariantly constant vector fields, as described above, that leave invariant both $g$ and $H$. Moreover, $\Phi$ will also be  invariant, $Y_\alpha(\Phi)=0$. Theorem \ref{th:pfibre}   can be adapted to HKT manifolds that are generalised gradient solitons, see also \cite{jggpa, jggpb},  as follows:

\begin{theorem}\label{th:pfibre2}
Let  the  HKT manifold $(M^{4q}, g, H, I_r)$  be  a gradient generalised Ricci soliton that  satisfies the assumptions of Theorem \ref{th:pfibre}.  Moreover, assume  that the $K$ group generates the vector field $\{Y_\alpha: \alpha=0,\dots ,3\}=\{Y, Y_r=-I_r Y: r=1,2,3\}$ on $M^{4q}$,  $Y^\flat=\theta+2d\Phi$ and $Y_\alpha\Phi=0$. Then, the metric $g$, torsion $H$ and Hermitian forms $\omega_r$ decompose as in (\ref{hktdec}), where $\lambda^\alpha(X)=g(Y_\alpha, X)$.

If $K$ is abelian, then  curvature of the principal bundle $\{\mathcal{F}^\alpha: \alpha=0,\dots, 3\}$ will be a $(1,1)$-form with respect to all complex structures, the base space $B^{4q-4}$ will be a conformally balanced HKT manifold, $H^B$ will satisfy (\ref{closx})  and $c_1^2$ must be a trivial class, where $c_1$ is the first Chern class of the principal bundle.

If the Lie algebra of $K$ is $\mathfrak{u}(1)\oplus \mathfrak{su}(2)$ and the representation $\mathrm{B}$ is non-trivial, the curvature of the principal bundle will decompose as
\be
  \mathcal{F}^{\mathfrak{u}(1)}=\mathcal{F}^{\mathfrak{u}(1)}_{\mathfrak{sp}(q-1)}~,~~~ \mathcal{F}^{\mathfrak{su}(2)}=\mathcal{F}^{\mathfrak{su}(2)}_{\mathfrak{sp}(q-1)}+\mathcal{F}^{\mathfrak{su}(2)}_{\mathfrak{sp}(1)}~,
  \label{decsp1}
  \ee
  with the component $\mathcal{F}^{\mathfrak{su}(2)}_{\mathfrak{sp}(1)}$ proportional to $\omega^\perp$ as a consequence of (\ref{frestrict}).  The base space $B^{4q-4}$ is a conformally balanced QKT manifold with $H^B$ restricted as in (\ref{closx}). The Pontryagin class of the principal bundle must be trivial.  Moreover, the adjoint bundle, $\mathrm{Ad}(M^{4q})$, of $M^{4q}$ is identified with the vector bundle over $B^{4q-4}$ spanned by the three complex structures that define the  QKT structure.
\end{theorem}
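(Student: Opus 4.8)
The plan is to feed the gradient-soliton data into Theorem~\ref{th:pfibre}. Its hypotheses are part of the assumption here: the $\h\nabla$-covariantly constant vector fields $\{Y_\alpha\}=\{Y,\,Y_r=-I_rY\}$ generate $K$ and preserve $\mathrm{Span}\langle I_r\rangle$ (recall that $Y$ is tri-holomorphic, so the associated element $B_0$ vanishes, while $Y_r$ is $I_r$-holomorphic). Hence the decompositions of $g$, $H$ and $\omega_r$ in~(\ref{hktdec}) hold with $\lambda^\alpha(X)=g(Y_\alpha,X)$, the fibre group $K$ carries its bi-invariant HKT data, the curvature $\mathcal{F}$ of $\lambda$ obeys~(\ref{frestrict}), the Pontryagin class is trivial, and closure of $H$ gives~(\ref{closx}). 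The only genuinely new input, $Y^\flat=\theta+2d\Phi$ with $Y_\alpha\Phi=0$, will be used exclusively to establish the conformally balanced condition on the base.

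For that step, note first that, since the $Y_\alpha$ are $\h\nabla$-covariantly constant, the fibre metric $h^K_{\alpha\beta}=g(Y_\alpha,Y_\beta)$ is constant in the adapted co-frame $\{\lambda^\alpha,e^a\}$, so the fibres of $\pi\colon M^{4q}\to B^{4q-4}$ have constant volume; consequently the basic (horizontal, $K$-invariant) part of the Lee form $\theta$ of $M^{4q}$ descends to the pull-back $\pi^*\theta^B$ of the Lee form of $B$. Now $Y_\alpha\Phi=0$ makes $d\Phi$ basic, so $d\Phi=\pi^*(d\Phi^B)$ with $\pi^*\Phi^B=\Phi$, whereas $Y^\flat=\lambda^0$ is purely vertical; hence the basic part of $\theta=Y^\flat-2d\Phi$ equals $-2\pi^*(d\Phi^B)$, so $\theta^B=-2d\Phi^B$ is exact and $B^{4q-4}$ is conformally balanced. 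I expect this to be the main obstacle: one must verify that the passage of the Lee form through the submersion is clean, i.e. that the $\mathfrak{sp}(1)$-component of $\mathcal{F}$ and the (constant) fibre geometry feed only into $H^B$ via~(\ref{closx}) and do not generate an extra horizontal contribution to $\theta$; equivalently one can run the argument through the induced generalised-soliton equation on $B$.

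In the abelian case $H^K=\tilde H^K=0$, so by the commutator identity for $\h\nabla$-parallel fields $[Y_\alpha,Y_\beta]=(\iota_{Y_\alpha}\iota_{Y_\beta}H)^\flat=0$. Putting $\tilde H^K=0$ into the two structural constraints of Theorem~\ref{th:pfibre}, $[B_\alpha,B_\beta]=-\tilde H^K_{\alpha\beta}{}^\gamma B_\gamma$ and $(I^K_r)^\beta{}_\alpha(B_\beta)^s{}_r=-\epsilon_r{}^s{}_t(B_\alpha)^t{}_r$, together with $B_0=0$ and $\mathcal{L}_{Y_r}I_r=0$ (which, as $B_r\in\mathfrak{so}(3)$, makes the $r$-th row and column of $B_r$ vanish), one finds that the $B_\alpha$ commute, are each supported on a single rotation generator, and are then annihilated by the $I^K$-constraint, so $B_\alpha=0$ for all $\alpha$. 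The left side of~(\ref{frestrict}) therefore vanishes, i.e. each $\mathcal{F}^\alpha$ commutes with every $I^\perp_r$ and is a $(1,1)$-form with respect to all three complex structures; then $\mathcal{F}^\alpha\wedge\mathcal{F}^\beta$ carries no $\mathfrak{sp}(1)$ piece, $(\ref{closx})$ reduces on $B$ to $h_{\alpha\beta}\mathcal{F}^\alpha\wedge\mathcal{F}^\beta=-dH^B$, which is exact, so $h_{\alpha\beta}c_1^\alpha\smile c_1^\beta$ (the class $c_1^2$) is trivial; and since the representation $B$ is trivial the base is HKT by Theorem~\ref{th:pfibre}, hence a conformally balanced HKT manifold with $H^B$ restricted by~(\ref{closx}).

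In the case with Lie algebra $\mathfrak{u}(1)\oplus\mathfrak{su}(2)$ and non-trivial $B$, the $\mathfrak{u}(1)$ factor is spanned by the tri-holomorphic $Y$, so $B_0=0$ and $\mathcal{F}^{\mathfrak{u}(1)}$ is $(1,1)$ with respect to all $I_r$, i.e. $\mathcal{F}^{\mathfrak{u}(1)}=\mathcal{F}^{\mathfrak{u}(1)}_{\mathfrak{sp}(q-1)}$; on the $\mathfrak{su}(2)$ factor the map $Y_r\mapsto B_r$ is a homomorphism $\mathfrak{su}(2)\to\mathfrak{so}(3)$, which is non-trivial hence an isomorphism, so $(B_r)^s{}_t\propto\epsilon_r{}^s{}_t$ is the standard action on $\mathrm{Span}\langle I_r\rangle$, and then the $\mathfrak{sp}(1)$ (anti-$(1,1)$) part of~(\ref{frestrict}) forces $\mathcal{F}^{\mathfrak{su}(2),r}_{\mathfrak{sp}(1)}\propto\omega^\perp_r$, which is the decomposition~(\ref{decsp1}). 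Finally, Theorem~\ref{th:pfibre} (non-trivial $B$) makes the base QKT; it is conformally balanced by the argument above, $H^B$ is restricted by~(\ref{closx}), the Pontryagin class vanishes, and the $\mathfrak{su}(2)$-summand of $\mathrm{Ad}(M^{4q})$ is the bundle associated to the representation $B$, which Theorem~\ref{th:pfibre} identifies with the rank-three vector bundle over $B$ spanned by the three complex structures defining the QKT structure.
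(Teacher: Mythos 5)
Your proposal is correct and follows essentially the same route as the paper: both reduce everything to Theorem \ref{th:pfibre}, use tri-holomorphy of $Y$ to get $B_0=0$ and $\mathcal{F}^{\mathfrak{u}(1)}$ of type $(1,1)$, identify the unique non-trivial $3$-dimensional representation of $\mathfrak{u}(1)\oplus\mathfrak{su}(2)$ to fix $(B_r)^s{}_t\propto\epsilon_r{}^s{}_t$ and hence $\mathcal{F}^{\mathfrak{su}(2)}_{\mathfrak{sp}(1)}\propto\omega^\perp$ via (\ref{frestrict}), and read off triviality of the Pontryagin class from (\ref{closx}). The one step you flag but do not close -- that the horizontal part of the Lee form descends cleanly so that $\pi^*\theta^B=\theta^\perp=-2d\Phi$ -- is exactly the step the paper also leaves as ``a direct calculation using the 3-form $H$ and the complex structures'' (deferring the full argument to the cited reference), while your explicit elimination of $B_\alpha$ in the abelian case is a detail the paper omits entirely.
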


\begin{proof}

The proof of this statement has been given in \cite{gpheterotic}.  Here, we shall add a bit more explanation to establish some formulae that will be used later.
The focus will be on $K$ non-abelian and $\mathrm{B}$ a non-trivial representation of $K$. Without loss of generality, one can take the vector fields $Y$ to be orthonormal. As $Y_0=Y$ commutes with the rest. The Lie algebra brackets can be written as
\be
[Y, Y_r]=0~,~~~[Y_r, Y_s]=- h\, \epsilon_{rs}{}^t Y_t
\ee
where $h$ is a constant.  The only not trivial 3-dimensional representation of $\mathfrak{u}(1)\oplus \mathfrak{su}(2)$ is the 3-dimensional vector representation of $\mathfrak{su}(2)$. Thus, the representation $\mathrm{B}$ can be chosen as
\be
\mathrm{B}_0=0~,~~~(\mathrm{B}_r)^s{}_t=b\, \epsilon_r{}^s{}_t~,
\ee
where $b$ is a constant. Then, the requirement that $\mathrm{B}$ is a representation of the Lie algebra, $[\mathrm{B}_\alpha, \mathrm{B}_\beta]=-\tilde H^K_{\alpha\beta}{}^\gamma \mathrm{B}_\gamma$,
implies that $h=b$.

As $Y$ is tri-holomorphic, $\mathcal{F}^0=\mathcal{F}^{\mathfrak{u}(1)}$ is $(1,1)$-form with respect to all complex structures and solves (\ref{frestrict}),  which justifies the identification in (\ref{decsp1}). Moreover, the component of $\mathcal{F}$ along $\mathfrak{su}(2)$, $\mathcal{F}^{\mathfrak{su}(2)}$, decomposes as in (\ref{decsp1}). Then (\ref{frestrict}) implies that
\be
\mathcal{F}_{\mathfrak{sp}(1)}^r=-\frac{h}{2} (\omega^\perp)^r~.
\ee
The base space $B^{4q-4}$ is conformally balanced because $\pi^*\theta^B=\theta^\perp=-2d\Phi$ -- it can be seen by a direct calculations using the 3-form $H$ and the complex structures. The Pontryagin class of $M^{4q}$, as a principal bundle over $B^{4q-4}$, is trivial because of (\ref{closx}) as $H^B$ is a 3-form on $B^{4q-4}$.

\end{proof}

\begin{remark}
The conditions described in the theorem above is not the full set of conditions required for $M^{4q}$ to be an HKT manifold.  For this,  additional conditions are needed, in particular (\ref{rhorf}).  This now reads
 \be
 (\h\rho^B)^r=-h\, {\mathcal F}^r~.
 \label{rhorf2}
 \ee
 This relates the curvature of the base space to the curvature of the principal bundle connection.
\end{remark}

\begin{remark}
 It is evident from the above analysis that the associated adjoint bundle, $\mathrm{Ad}(M^{4q})$,  of $M^{4q}$, as a principal bundle over $B^{4q-4}$, is the vector subbundle of $\Lambda^2(B^{4q-4})$ spanned by the three
Hermitian structures $\omega^B_r$ of $B^{4q-4}$ viewed as a QKT manifold.

After taking a local section from $B^{4q-4}$ into $M^{4q}$, the QKT condition on $B^{4q-4}$ can be expressed as
\be
\h\nabla^B \omega^B_r-h \xi^t \epsilon_t{}^s{}_r \omega^B_s=0~,
\label{qktb}
\ee
where $\xi$ is the pull-back of the principal bundle connection $\lambda$ on $B^{4q-4}$ with the local section.
\end{remark}

  \subsection{An example} \label{sub:example}


The construction of HKT manifolds   as a fibration over a QKT manifolds has been known for sometime. The first such constructions have been in the context of homogeneous spaces in \cite{op}. Nevertheless, it is instructive to have an explicit example of a strong compact HKT manifold constructed in this way. One  simple example that illustrates the method is the HKT manifold $SU(3)$ \cite{Spindel}.  As a reward, the HKT structure on $SU(3)$ is explicitly presented.

First, let us construct the HKT structure on $SU(3)$ as a group manifold instead of as a fibration. Suppose that $\alpha$ and $\beta$ are the simple roots of $\mathfrak{su}(3)$. The positive roots  are $\Delta^+=\{\alpha, \beta, \alpha+\beta\}$ with the last one the highest, while the negative ones are $\Delta^-=\{-\alpha, -\beta, -\alpha-\beta\}$. Then,  the generators of $\mathfrak{su}(3)$ written in the Chevalley basis are $\{ h_p, e_\alpha, e_\beta, e_{\alpha+\beta}, e_{-\alpha}, e_{-\beta}, e_{-\alpha-\beta}; p=1,2\}$ and the non-vanishing components of the Killing form $\langle \cdot, \cdot \rangle$ can be chosen as
\be
\langle h_p, h_q\rangle=\delta_{pq}~,~~~\langle e_\gamma, e_{-\gamma'}\rangle=\delta_{\gamma\gamma'}~,
\label{kform}
\ee
where $\gamma, \gamma'\in \Delta^+$ and $\{h_p; p=1,2\}$ are the generators of the Cartan subalgebra. Note that $\alpha, \beta$ and $\gamma$ denote the roots of the Lie algebra $\mathfrak{su}(3)$ in contrast to their previous use as indices.

The first complex structure on $SU(3)$ acts on the above basis of the Lie algebra as
\be
I(h_1)=-h_2~,~~I(h_2)=h_1~,~~I(e_\gamma)=i e_\gamma~,~~~I(e_{-\gamma})=-i e_{-\gamma}~; ~~\gamma\in \Delta^+~,
\ee
i.e. the holomorphic basis is $\{h_1+i h_2, e_\alpha, e_\beta, e_{\alpha+\beta}\}$. It can be seen by an explicit calculation that $I$ is integrable -- the torsion $H$ constructed from the structure constants of $SU(3)$ and the Killing form is a $(2,1)\oplus (1,2)$ form.

The second complex structure is given by
\be
J(e_\alpha)=-e_{-\beta}~,~~~J(e_\beta)=e_{-\alpha}~,~~J(e_{\alpha+\beta})=h_1-ih_2~,~~J(e_{-\alpha-\beta})=h_1+i h_2~,
\ee
where the action of $J$ on the rest of generators is determined by the property that   $J^2=-{\bf 1}$.  It can be shown that $J$ is integrable and
anti-commutes with $I$. The Killing form together with $I$ and $J$ induce a left-invariant HKT structure on $SU(3)$.

Next, let us view the HKT structure on $SU(3)$ as arising from a fibration. It is well known that
\be
S(U(1)\times U(2))\hookrightarrow SU(3)\rightarrow \bC P^2~,
\label{fibr}
\ee
 with $S(U(1)\times U(2))=U(2)$
 as a group manifold. To describe  the inclusion $S(U(1)\times U(2))\hookrightarrow SU(3)$, observe that the group homomorphism  given by
 \begin{align}
 U(1)\times SU(2)&\rightarrow SU(3)
 \cr
 (z, A)&\rightarrow \begin{pmatrix}z^{-2} &0\cr
 0& z A\end{pmatrix}~,
 \end{align}
 where $z\in U(1)$ and $A\in SU(2)$, factors through $S(U(1)\times U(2))$ -- note that it has kernel $\bZ_2$.

  There are three different embedding of $SU(2)$ in $SU(3)$ up to a conjugation. These are given by the generators $\mathfrak{su}(2)_\gamma=\{h_\gamma, e_\gamma, e_{-\gamma}\}$ for $\gamma\in \Delta^+$, where $h_\gamma= \frac{2}{\gamma^2}\gamma^p h_p=\gamma^p h_p$ as the length square of all positive roots is $2$, $\gamma^2=2$. For every such embedding, the Lie algebra $\mathfrak{su}(3)$ decomposes as
\be
\mathfrak{su}(3)=\mathfrak{h}\oplus \mathfrak{m}~,
\ee
where $\mathfrak{h}=\mathfrak{su}(2)_\gamma\cup \bR\langle h_1, h_2\rangle$ and $\mathfrak{m}$ is spanned by the rest of the generators of $\mathfrak{su}(3)$. Moreover, for each such embedding, there is a fibration (\ref{fibr}). However, not all such fibrations are compatible with the HKT structure defined above on $SU(3)$. In particular,  notice that for $\gamma$ a simple root, the complex structure $J$ on $SU(3)$ does not respect the decomposition in fibre and base space directions.  It is only the embedding with $\gamma=\alpha+\beta$, the highest root that does.

Choosing the embedding labeled by the highest root, let us proceed to find the curvature $\mathcal{F}$ of the fibration.  These are given by the structure constants that determine the restriction of the commutator $[\mathfrak{m}, \mathfrak{m}]$ on $\mathfrak{h}=\mathfrak{su}(2)_{\alpha+\beta}\cup \bR\langle h_1, h_2\rangle$.  In particular, the relevant commutators are
\be
[e_\alpha, e_{-\alpha}]=h_\alpha~,~~[e_\beta, e_{-\beta}]=h_\beta~,~~[e_\alpha, e_\beta]=e_{\alpha+\beta}~,~~[e_{-\alpha}, e_{-\beta}]=-e_{-\alpha-\beta}~.
\ee
From this, one can read  the curvature $\mathcal{F}$  of the fibration as
\begin{align}
\mathcal {F}= \xi^\alpha\wedge \xi^{-\alpha} h_\alpha+\xi^\beta\wedge \xi^{-\beta} h_\beta+\xi^\alpha\wedge \xi^{\beta} e_{\alpha+\beta}-\xi^{-\alpha}\wedge \xi^{-\beta} e_{-\alpha-\beta}~,
\label{fcomp1}
\end{align}
where $\xi$ is the co-frame on $\bC P^2$ induced from left-invariant co-frame of $SU(3)$ after pulling back with a local section.  Note that $h_{\alpha-\beta}$ commutes with all those of $\mathfrak{su}(2)_{\alpha+\beta}$ and $(h_{\alpha-\beta}, h_{\alpha-\beta})=6$.

It can be easily seen that $H^B$ vanishes as those components of the structure constants of $SU(3)$ vanish.
As the torsion 3-form on $SU(3)$ is closed, consistency with (\ref{closx}) requires that $\mathcal {F}\wedge \mathcal {F}$ must identically vanish. Indeed, this can be verified by a short computation using (\ref{fcomp1}), where in the computation of $\mathcal {F}\wedge \mathcal {F}$ the  Killing form (\ref{kform})  on $SU(3)$ has been used to do the calculation    over the gauge Lie algebra generators.

Having proven that $\mathcal {F}\wedge  {F}$ vanishes identically for the construction considered, it is nevertheless instructive to notice that $\mathcal{F}$ in (\ref{fcomp1}) can be rewritten as
\begin{align}
\mathcal {F}&= \frac{1}{2} \Big((\xi^\alpha\wedge \xi^{-\alpha}-\xi^\beta\wedge \xi^{-\beta} )\, h_{\alpha-\beta}+(\xi^\alpha\wedge \xi^{-\alpha}+\xi^\beta\wedge \xi^{-\beta} )\, h_{\alpha+\beta}
\cr
&\qquad
+(\xi^\alpha\wedge \xi^{\beta}+\xi^{-\alpha}\wedge \xi^{-\beta})\,(e_{\alpha+\beta}-e_{-\alpha-\beta})
\cr
&
\qquad +(\xi^\alpha\wedge \xi^{\beta}-\xi^{-\alpha}\wedge \xi^{-\beta})\,(e_{\alpha+\beta}+e_{-\alpha-\beta})\Big)~.
\label{fcomp}
\end{align}
In this basis, $h_{\alpha-\beta}$ commutes with all the other basis elements of $\mathfrak{u}(1)\oplus\mathfrak{su}(2)$ and spans the $\mathfrak{u}(1)$ subalgebra while the rest of the basis elements span the $\mathfrak{su}(2)$ subalgebra. Moreover,  $h_{\alpha-\beta}$ has length square $6$ while the rest have $2$. It is expected from  Chern-Weyl  theory that in an expression of $\mathcal{F}\wedge \mathcal{F}$ in terms of characteristic classes  this relative factor of $3$ will make a contribution. Indeed, we shall demonstrate below that this is the case.

\subsection{Compact strong 8-dimensional HKT   manifolds}

\subsection{A refinement of theorems  \ref{th:pfibre} and \ref{th:pfibre2} }

Recently, it has been demonstrated   in \cite{bfgv} that for 8-dimensional, compact, simply connected,   strong HKT, non-hyper-K\"ahler,  manifolds $(M^8, g, H, I_r)$ the vector fields  $\{Y, Y_r=-I_r Y: r=1,2,3\}$ in Theorem \ref{th:pfibre2} generate a $\mathfrak{u}(1)\oplus \mathfrak{su}(2)$ Lie algebra and  preserve the span of the three complex structures of $M^8$ but they may not have closed orbits. Moreover, it has been shown that there is another $\mathfrak{u}(1)\oplus \mathfrak{su}(2)$ action on $M^8$ that has closed orbits but it may not preserve the span of the three complex structures.

In any case, the closure of $\{Y, Y_r=-I_r Y: r=1,2,3\}$ vector fields under Lie brackets for 8-dimensional HKT manifolds is a rather weak assumption.  If they do not close, $M^8$ will admit another four $\h\nabla$-covariantly constant vector fields and it will locally be isometric to a group manifold. Similarly, if the span of the three complex structures is not preserved under the action of $\{Y, Y_r=-I_r Y: r=1,2,3\}$, there will be additional $\h\nabla$-covariantly constant $(1,1)$ tensors on $M^8$ -- the Lie derivative of the complex structures -- that will reduce the holonomy of $\h\nabla$ to identity. This again leads  to a local group manifold structure on $M^8$.  Both these results are a consequence of the first Bianchi identity in (\ref{bianchi}), see also discussion in \cite{uggp, phgp3, gpheterotic}.

Therefore, the only essential assumption that we have to make to apply the results of the Theorems \ref{th:pfibre} and \ref{th:pfibre2} is that the action generated by the  fields $\{Y, Y_r=-I_r Y: r=1,2,3\}$ on $M^8$ can be integrated to a free action of a compact group $K$ on $M^8$. As a result  $M^8$  is a principal bundle with fibre group $K$.

The results of the Theorems  \ref{th:pfibre} and \ref{th:pfibre2} can be simplified for 8-dimensional HKT manifolds. Focusing on the {\it non-abelian group} $K$, the base space $B^4$ admits a conformally balanced QKT structure. It turns out that in such a case, the metric on $B^4$ can be re-scaled  as
\be
g^B= e^{2\Phi} \mathring {g}~,
\label{gbmg}
\ee
such that $\mathring {g}$ is a quaternionic K\"ahler metric. In particular, the condition (\ref{qktb}) now written in terms of the quaternionic structure reads
\be
\mathring \nabla I^B_r-h\, \xi^t \epsilon_t{}^s{}_r I^B_s=0~.
\label{qkxx}
\ee
where $\mathring \nabla$ is the Levi-Civita connection of $\mathring g$ on $B^4$.

Moreover, after a short calculation, the closure of $H$ given in   (\ref{closx})   can be rewritten  as
\begin{align}
\mathring{\nabla}^2 e^{2\Phi}&
=-\frac{1}{2} \Big(\mathcal{F}^{\mathfrak{u}(1)})_o^2+ (\mathcal{F}_-^{\mathfrak{su}(2)})_o^2\Big)+ \frac{3 h^2 }{2}  e^{4\Phi}~,
\label{dconx}
\end{align}
where $(\cdot )_0^2$ denotes the square of the tensor is  taken with respect to the metric $\mathring{g}$. This equation has appeared before in a similar context in \cite{bgp}. In the last line, we have also decomposed
$\mathcal{F}^{\mathfrak{su}(2)}$ into anti-self-dual and self-dual components as
\be
\mathcal{F}^{\mathfrak{su}(2)}=\mathcal{F}^{\mathfrak{su}(2)}_-\oplus \mathcal{F}^{\mathfrak{su}(2)}_+~,
\ee
with the self-dual component
\be
\mathcal{F}^{\mathfrak{su}(2)}_+=-\frac{h}{2} \omega^r~.
\label{fomega}
\ee
The anti-self-dual component $\mathcal{F}^{\mathfrak{su}(2)}_-$ is a $(1,1)$-form with respect to all complex structures and $\mathcal{F}^{\mathfrak{u}(1)}=\mathcal{F}^{\mathfrak{u}(1)}_-$, i.e. we have chosen the orientation on $B^4$ induced by the quaternionic K\"ahler structure with $d\mathrm{vol}=\frac{1}{3} \sum_r \omega^B_r\wedge \omega^B_r$.
We have changed notation from that in (\ref{decsp1}) because in four dimensions, $q=2$, it is not possible to distinguish between $Sp(q-1)$ and $Sp(1)$.  In appendix B, we shall show that (\ref{dconx}) has always a solution on  $B^4$, compact,  with fixed metric $\mathring g$ and $\mathcal{F}$ using the method of supersolutions and subsolutions.

\begin{lemma}\label{le:positive}
Suppose that $(M^8, g, H, I_r)$ is a compact, strong, HKT manifold that satisfies the conditions of Theorem \ref{th:pfibre2}. Then, $(B^4, \mathring g, I_r)$ is a anti-self-dual 4-manifold with positive scalar curvature.
\end{lemma}
\begin{proof}
A 4-manifold is anti-self-dual, iff the self-dual part of the Weyl tensor vanishes. From Proposition 7.1, page 92, in \cite{salamon}, it is sufficient to show that
$\mathcal{F}^{\mathfrak{su}(2)}_+(Q)$ is given in terms of the Ricci scalar $R(\mathring g)$ as detailed below.   From (\ref{fomega}), one has
 $\mathcal{F}^r_+=-\frac{h}{2}\omega^r$, which can be rewritten as  $\mathcal{F}^r(Q)_+=-\frac{h}{2} e^{2\Phi} \mathring{\omega}^r$.

As the HKT manifold $M^8$ is a gradient generalised Ricci soliton, it satisfies the condition $\h \R+2\h\nabla\partial \Phi=0$.  Thus  $\h R+2\nabla^2\Phi=0$. Next, we express the  scalar curvature  $\h R$ of the 8-dimensional manifold $M^8$ in terms of the Ricci scalar $\h R(g^B)$ of the base space $B^4$ using the identity (4.25) in \cite{gpheterotic}.  This gives
\be
2(\nabla^B)^2 \Phi+ R(g^B)-\sum_{\alpha=0}^3 ({\mathcal F}^r)^2=0\Longrightarrow 2(\nabla^B)^2 \Phi+ R(g^B)-({\mathcal F}^{\mathfrak{u}(1)})^2-
({\mathcal F}^{\mathfrak{su}(2)}_-)^2-3 h^2=0~.
\ee
 Next, we substitute for $g^B=e^{2\Phi} \mathring g$ and after some computation, the above expression can be rewritten as
 \be
 \mathring{\nabla}^2 e^{2\Phi}=\frac{1}{2} R(\mathring g)-\frac{1}{2} \Big(({\mathcal F}^{\mathfrak{u}(1)})_o^2+ ({\mathcal F}^{\mathfrak{su}(2)}_-)_o^2\Big)
 -\frac{3 h^2}{2} e^{4\Phi}~,
 \label{feqn}
 \ee
where we have  used the standard relation
\be
R(e^{2\Phi} \mathring g)=e^{-2\Phi} \Big(R(\mathring g)- 6\mathring \nabla^2 \Phi-6 (\partial\Phi)^2_o\Big)~,
\ee
 in four dimensions.

 Upon comparing,  (\ref{dconx}) with (\ref{feqn}), we  conclude that
 \be
 R(\mathring g)=6 h^2 e^{2\Phi}>0~.
 \label{rdil}
 \ee
 Thus, the  scalar curvature of $(B^4, \mathring g)$ must be strictly positive.  Note also that this equation relates the radius of the fibre with that of the base space $B^4$ for fixed $\Phi$.

 It is well known that the Weyl tensor $W$ and  $\mathcal{F}_+^r$  determine  the self-dual component of the Riemann tensor that maps $\Lambda^{2+}(B^4)$ into $\Lambda^{2+}(B^4)$.  In particular, we find that  $h\mathcal{F}_+^r=-\frac{1}{12 } R(\mathring g) \mathring\omega^r$, which is the required condition for $B^4$ to be anti-self dual.  Note that the curvature of $\Lambda^+(B^4)$ is $h \mathcal{F}_+^r$ from (\ref{qkxx}).
\end{proof}

Next, we  consider  principal bundles with fibre group $K$ either $S(U(1)\times U(2))$ or $U(2)$. Following the $SU(3)$ example, we shall investigate the former case. The analysis for the latter is similar as it will be explained below.
Principal bundles with fibre group $K=S(U(1)\times U(2))$ over a 4-manifold $B^4$ are associated with a complex vector bundle $E$ and a complex line bundle $L$ that satisfy the  relation
\be
c_1(E)+c_1(L)=0~,
\label{trcon}
\ee
where $c_1(E)$ and $c_1(L)$ are the first Chern classes of $E$ and $L$, respectively.  This imposes the condition that the fibre group $K$ is special. As a result, they are classified by  $(c_1(L), c_2(E))\in H^2(B^4, \bZ)\oplus H^4(B^4, \bZ)$, where $c_2(E)$ is the second Chern class of $E$.

\begin{lemma}\label{le:top}
The topological condition  $[\mathcal{F}\wedge \mathcal{F}]=0$ required for the existence a strong  HKT structure on the principal bundle $M^8$ with fibre group $S(U(1)\times U(2))$ over a manifold $B^4$ can be expressed in terms of the
triviality of the class
\be
3 c_1(L)^2+ 2\chi+3\tau~,
\ee
where $\chi$ and $\tau$ are the Euler and signature characteristic classes of $B^4$, respectively. Thus $(2\chi+3\tau)[B^4]$ must be divisible  by $3$.  It is also required that $c_1(L)^2[B^4]\leq 0$ in the orientation defined by the quaternionic structure on $B^4$.
\end{lemma}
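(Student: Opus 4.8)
The notation $\mathcal{F}\wedge\mathcal{F}$ in the statement stands for the basic $4$-form $h_{\alpha\beta}\,\mathcal{F}^\alpha\wedge\mathcal{F}^\beta$ built with the invariant metric $h$ on $\mathfrak{k}=\mathfrak{u}(1)\oplus\mathfrak{su}(2)$, which is precisely the combination entering (\ref{closx}). The plan is to evaluate this Chern--Weil $4$-form in terms of characteristic classes of $B^4$ and then read off the arithmetic consequences.

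First I would observe that $h_{\alpha\beta}\,\mathcal{F}^\alpha\wedge\mathcal{F}^\beta$, being gauge invariant and horizontal, descends to a closed $4$-form $\Omega$ on $B^4$ --- the Chern--Weil representative attached to $h$ --- and that (\ref{closx}) then reads $\Omega=-dH^B$ on $B^4$. Since $H^B$ is a globally defined $3$-form on the closed $4$-manifold $B^4$ (Theorem~\ref{th:pfibre}), $\Omega$ is exact, and since $H^4_{\mathrm{dR}}(B^4)\cong\mathbb{R}$ via integration, $[\Omega]=0$. This is the hypothesis $[\mathcal{F}\wedge\mathcal{F}]=0$, and since the proportionality established below has nonzero coefficient, the vanishing of $[\Omega]$ will be equivalent to the triviality of the stated class.

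The heart of the proof is the Chern--Weil bookkeeping. Because $h$ is block diagonal with respect to $\mathfrak{u}(1)\oplus\mathfrak{su}(2)$, the mixed terms drop and $\Omega$ splits into a $\mathfrak{u}(1)$-piece and an $\mathfrak{su}(2)$-piece. The crucial input is the relative normalisation: the invariant length squared of the $\mathfrak{u}(1)$ generator is three times that of the $\mathfrak{su}(2)$ generators --- exactly the ratio $6:2$ exhibited on the Chevalley basis of $\mathfrak{su}(3)$ in Section~\ref{sub:example}, where $h_{\alpha-\beta}$ spans the $\mathfrak{u}(1)$ with $(h_{\alpha-\beta},h_{\alpha-\beta})=6$ while the roots have length squared $2$. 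Normalising so that the $\mathfrak{su}(2)$ inner product is the one computing the first Pontryagin class, the $\mathfrak{u}(1)$-piece represents a fixed positive multiple of $3\,c_1(L)^2$, with $L$ the line bundle associated with the $U(1)$-factor of $S(U(1)\times U(2))$ (so that $c_1(E)+c_1(L)=0$, cf.\ (\ref{trcon})), while the $\mathfrak{su}(2)$-piece represents the same multiple of $p_1$ of the adjoint $\mathfrak{su}(2)$-bundle of $M^8$. By the identification recorded after Theorem~\ref{th:pfibre2} (with the connection correspondence (\ref{qkxx})), that bundle is the bundle $\Lambda^+(B^4)$ of self-dual $2$-forms spanned by $\{I^B_r\}$, and the classical identity $p_1(\Lambda^+B^4)=2\chi+3\tau$ then gives $[\Omega]$ as a fixed nonzero multiple of $3\,c_1(L)^2+2\chi+3\tau$. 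Combining with $[\Omega]=0$ yields the triviality of that class. Evaluating on $[B^4]$ gives $(2\chi+3\tau)[B^4]=-3\,c_1(L)^2[B^4]$, hence divisibility by $3$; and since in the orientation induced by the quaternionic structure $\mathcal{F}^{\mathfrak{u}(1)}=\mathcal{F}^{\mathfrak{u}(1)}_{-}$ is anti-self-dual (established just before the lemma), $\mathcal{F}^{\mathfrak{u}(1)}\wedge\mathcal{F}^{\mathfrak{u}(1)}=-|\mathcal{F}^{\mathfrak{u}(1)}|^2\,d\mathrm{vol}\le 0$ pointwise, so $c_1(L)^2[B^4]\le 0$. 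For fibre group $U(2)$ the argument is identical, the two principal bundles differing only by the $\mathbb{Z}_2$-cover $S(U(1)\times U(2))\to U(2)$, which changes none of the de Rham classes involved.

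The main obstacle is the numerical bookkeeping in the third step: one must track all normalisation constants --- the factor $3$ from the invariant metric, the powers of $2\pi$ in the Chern--Weil normalisation, and which integral power of the defining $U(1)$-character is designated $L$ --- so that the class comes out as exactly $3\,c_1(L)^2+2\chi+3\tau$ rather than some other integral combination, and one must pin down the identification of the $\mathfrak{su}(2)$-adjoint bundle with $\Lambda^+(B^4)$ precisely enough to substitute $p_1(\Lambda^+B^4)=2\chi+3\tau$. The cohomological vanishing in the first step, the evaluation on the fundamental class, and the sign statement $c_1(L)^2[B^4]\le 0$ are then routine.
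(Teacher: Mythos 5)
Your architecture is the same as the paper's: both arguments hinge on the identification of the adjoint ($\mathfrak{su}(2)$-)bundle with $\Lambda^+(B^4)$ so that $p_1(\mathrm{Ad}(E))=p_1(\Lambda^+)=2\chi+3\tau$, both get the triviality of $[\mathcal{F}\wedge\mathcal{F}]$ from the exactness forced by (\ref{closx}), and both get $c_1(L)^2[B^4]\le 0$ from the anti-self-duality of $\mathcal{F}^{\mathfrak{u}(1)}$ in the quaternionic orientation. Those parts of your proposal are fine.

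The genuine gap is exactly the step you yourself flag as ``the main obstacle'': the coefficient $3$ in front of $c_1(L)^2$. You derive it from the ratio $6:2$ of the lengths of the $\mathfrak{u}(1)$ and $\mathfrak{su}(2)$ generators in the invariant metric exhibited on $\mathfrak{su}(3)$, but that ratio alone does not determine the class, because converting the $\mathfrak{u}(1)$ curvature into $c_1(L)$ for a specific \emph{integral} line bundle requires fixing which character of the $U(1)$ factor defines $L$ (note the homomorphism $(z,A)\mapsto \mathrm{diag}(z^{-2},zA)$ and its $\bZ_2$ kernel), and a mismatch there rescales $c_1(L)^2$ by the square of an integer. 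Since the entire Diophantine analysis in theorem \ref{th:cla} (the equation $3\sum n_p^2=4-k$) rests on the coefficient being exactly $3$, this cannot be left as bookkeeping to be done later. The paper closes it without ever touching the invariant metric: writing $c_1(E)=x_1+x_2$, $c_2(E)=x_1x_2$ by the splitting principle and $c_1(L)=\ell=-x_1-x_2$ from (\ref{trcon}), one has
\be
[\mathcal{F}\wedge\mathcal{F}]=\ell^2+x_1^2+x_2^2=2\ell^2-2x_1x_2=\tfrac{1}{2}\big(3\,c_1(L)^2+2\chi+3\tau\big)~,
\ee
using $c_1(E)^2-4c_2(E)=p_1(\mathrm{Ad}(E))=2\chi+3\tau$. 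You should replace your normalisation-by-analogy with this computation (or an equivalent one that pins down the integral structure); once that is done the rest of your argument goes through as written.
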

\begin{proof}

We have demonstrated that in order a principal bundle $M^8$ with fibre group $S(U(1)\times SU(2))$ to admit an HKT structure, the associated  adjoint bundle, $\mathrm{Ad}(M^8)$, of the principal bundle $M^8$ must be identified with $\Lambda^+(B^4)$, see (\ref{qkxx}).
This implies that the adjoint bundle of $E$, $\mathrm{Ad}(E)\equiv \mathrm{Ad}(M^8)$,  should be identified with $\Lambda^+\equiv \Lambda^+(B^4)$ as the adjoint representation of abelian groups is trivial.  Thus from the Hirzenbruch's signature theorem
\be
p_1(\mathrm{Ad}(E))=p_1(\Lambda^+)=2\chi+3\tau~,
\label{pone}
\ee
where $\chi$ is the Euler characteristic class and $\tau$ is that of the signature class of $B^4$.

A classic formula in characteristic classes expresses the Pontryagin class of $\mathrm{Ad}(E)$ in terms of the Chern classes of $E$ as
\be
p_1(\mathrm{Ad}(E))=c_1(E)^2-4 c_2(E)~.
\ee
Thus, we conclude that
\be
c_1(E)^2-4 c_2(E)=2\chi+3\tau~.
\label{ccct}
\ee
Using the splitting principle for the $E$ bundle, we have that $c_1(E)=x_1+x_2$,  $c_2(E)=x_1x_2$ and $p_1(E)=x_1^2+x_2^2$. Set also $c_1(L)=\ell$. In such a case, we have
 \be
 \ell=-x_1-x_2~,
 \ee
 from (\ref{trcon}).
 Furthermore, after an appropriate overall normalisation, typically $1/4\pi^2$ for principal bundle connections, the class
\be
[\mathcal{F}\wedge \mathcal{F}]=\ell^2+x_1^2+x_2^2=2\ell^2-2x_1x_2=2\ell^2+\frac{1}{2} \Big(2\chi+3\tau-\ell^2\Big)=\frac{1}{2} \Big(3\,c_1(L)^2+2\chi+3\tau\Big)~,
\ee
where we have used (\ref{ccct}).  This proves the first part of the lemma after possibly an overall rescaling of the formula. The condition that $c_1(L)^2[B^4]\leq 0$ follows from the fact that the associated curvature is anti-self-dual in the orientation on $B^4$ induced by the quaternionic structure.

\end{proof}

\begin{remark}
Specifying the base space manifold $B^4$, the principal bundles with $S(U(1)\times U(2))$ fibre that admit an HKT structure are only characterised by $c_1(L)\in H^2(B^4, \bZ)$.  This is because $c_2(E)$ is determined in terms of $c_1(L)$, $\chi$ and $\tau$ as a result of equations (\ref{trcon}) and (\ref{ccct}).  Therefore in the search of examples with a given  base space $B^4$,  the only choice left is to vary $c_1(L)$ for the search of topologically different manifolds $M^8$.  Furthermore, the topological condition arising from the closure of $H$ determines $c_1(L)^2[B^4]$ in terms of the topological data of the base space.  This considerably restricts the possibility of a solution.

\end{remark}

\begin{remark}
 The condition (\ref{rhorf2}) that relates the Ricci forms of the base space $B$ to the curvature $\mathcal F$ of the principal bundle can now be rewritten
 as
 \be
 \mathring \rho^r=-h\, \mathcal{F}^r~,
 \label{rhorf3}
 \ee
 where $\mathring \rho^r$ are the Ricci forms of the Quaternionic K\"ahler manifold $(B^4, \mathring g)$.  This can be used, together with (\ref{rdil}), to rewrite
 either (\ref{dconx}) or (\ref{feqn}) in terms of the geometry of $(B^4, \mathring g)$ after eliminating some of the contribution of $\mathcal {F}$.  We shall demonstrate below that the resulting differential  equation  will restrict the geometry of base space.

\end{remark}

\begin{remark}

In the search for manifolds with an HKT structure, one can also consider  principal bundles $M^8$ with fibre group $U(2)$. In such a case, a similar analysis to that presented in Lemma \ref{le:top} will reveal that, up to an appropriate overall normalisation of the first term,
\be
 [\mathcal{F}\wedge \mathcal{F}]=p_1(E)=3c_1(E)^2+2\chi+3\tau~,
\ee
where $E$ is the associated rank 2 complex bundle.  Again, given the base space $B^4$, the only choice left to determine the topology of the HKT manifold $M^8$ as a principal bundle  is $c_1(E)\in H^2(B^4, \bZ)$.

\end{remark}

\begin{prop}\label{prop:HKT}
 A principal bundle $M^8$ with fibre group either $S(U(1)\times U(2))$ or $U(2)$ such that the action of the fibre group preserves the span of the three complex structures admits a strong HKT structure, iff the following conditions hold:
 \begin{enumerate}
 \item The base space $(B^4, \mathring g)$ of the principal bundle  is an anti-self-dual 4-manifold with positive scalar curvature.

  \item  The topological condition $(3c_1^2+2\chi+3\tau)[B^4]=0$ holds, where  $c_1$ is the first Chern class  that characterises bundles with fibre  $S(U(1)\times U(2))$  or the first class of the $U(2)$ bundle, respectively,  and $\chi$ and $\tau$ are the E\"uler and signature characteristic classes of $B^4$.

 \item The Chern class $c_1$ satisfies the condition $c_1^2[B^4]\leq 0$  in the orientation given by the quaternionic structure.

 \item The condition $\mathring \rho^r=-h\, \mathcal{F}^r$ is valid.

 \item The metric $\mathring g$ on $B^4$ solves the equation
 \begin{equation}
 \mathring{\nabla}^2 R(\mathring g)=-3 h^2 \big(\mathcal{F}^{\mathfrak{u}(1)}\big)_o^2-3 \big(\mathrm{Ric}(\mathring g)\big)^2_o + R(\mathring g)^2~.
 \label{rog}
 \end{equation}
 \end{enumerate}
 \end{prop}

\begin{proof}
To prove this in one direction observe that the statements 1, 2 and 3 of the proposition follow from the two Lemmas \ref{le:positive} and \ref{le:top}. We have also explained how the fourth condition arises.

The last statement follows from (\ref{dconx}). For this, first express   $e^{2\Phi}$ in terms of the Ricci scalar using (\ref{rdil}).  Then, the integrability condition of (\ref{qktb}) gives, after some computation, that
\be
\mathrm{Ric}(\mathring g)(V, W)= -h \sum_{r=1}^3 \mathcal{F}^r(I_r^B V, W)~.
\ee
This in turn implies that
\be
\Big(\mathrm{Ric}(\mathring g)-\frac{1}{4} \mathring g\, R(\mathring g)\Big)_o^2=h^2 \big(\mathcal{F}_-^{\mathfrak{su}(2)})\big)_o~,
\ee
i.e. the length square of the anti-self-dual part of the $\mathfrak{su}(2)$ component of the curvature of the principal bundle can be expressed in terms of the length square of the traceless part of the Ricci curvature of the metric  $\mathring g$ of the base space $B^4$. Putting this into (\ref{dconx}) and after a re-arrangement of terms, one arrives at (\ref{rog}).

The converse follows because an HKT structure on the bundle space $M^8$ can be constructed using (\ref{hktdec}) together with (\ref{gbmg}). The restriction for the base space $B^4$ to be anti-self-dual leads to the relation (\ref{rdil}).  Furthermore, the fourth condition implies that the Ricci forms of $M^8$ vanish as it is required for an HKT manifold. The only condition that remains to be satisfied is the closure of $H$.  This is a consequence of the statement 5 in the proposition.
\end{proof}

It is clear that the existence of compact 8-dimensional strong HKT manifolds depends on the existence of compact anti-self-dual 4-dimensional manifolds with positive scalar curvature. It turns out that these restrictions  impose  rather strong conditions on  $B^4$. Adapting these results to the notation and the orientation of $B^4$, we use,  the statement in \cite{lebrun0, lebrun} reads as follows:

\begin{prop}\label{prop:lb}
Let $(B^4, \mathring g)$  be a compact simply connected anti-self-dual $4$-manifold with positive scalar curvature. Then, $B^4$ is homeomorphic either to $\#_k\overline{\bC P}^2$ for some $k\geq 1$ or to $S^4$. Moreover, $B^4$  is diffeomorphic to $\#_k\overline{\bC P}^2$ for $k\leq 4$.
\end{prop}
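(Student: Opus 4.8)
The plan is to recognise that this proposition is, modulo the orientation bookkeeping noted above, exactly the classification theorem of LeBrun, Mayatani and Nitta \cite{lebrun}; so proving it amounts to recalling the structure of their argument and checking that in the orientation we have fixed — the one in which $\Lambda^+(B^4)$ is spanned by the self-dual forms $\mathring{\omega}^r$ — it is the \emph{anti}-self-dual half of the Weyl tensor that vanishes, which is why the $\bC P^2$ of \cite{lebrun} gets replaced by $\overline{\bC P}^2$ while $S^4$ is unchanged.

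First I would record the Weitzenb\"ock vanishing $b_+(B^4)=0$. On a closed oriented ASD four-manifold the Hodge Laplacian on self-dual two-forms is $\nabla^*\nabla+\tfrac{R}{3}-2W^+$, and $W^+=0$ by hypothesis, so any self-dual harmonic $\omega$ satisfies $\nabla^*\nabla\omega+\tfrac{R}{3}\omega=0$; pairing with $\omega$, integrating over the compact $B^4$, and using $R>0$ forces $\omega=0$. Hence $H^2_+(B^4)=0$ and the intersection form of $B^4$ is negative (semi-)definite.

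Next I would reduce to the simply connected case: positivity of the scalar curvature and anti-self-duality both pass to covers, so any cover of $B^4$ carries an ASD metric of positive scalar curvature; then, as in \cite{lebrun}, finiteness of $\pi_1$ (forced by positive scalar curvature) together with an analysis of the possible free actions on the short list of covers produced by the simply connected case leaves only $\pi_1(B^4)=1$. Granting this, $b_+(B^4)=0$ and Donaldson's diagonalisation theorem turn the negative-definite intersection form into $\oplus^{k}\langle-1\rangle$ with $k=b_2(B^4)$, and Freedman's classification then identifies $B^4$ up to homeomorphism with $\#k\,\overline{\bC P}^2$ when $k\geq1$ and with $S^4$ when $k=0$.

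The hard part — and the one I would attribute to \cite{lebrun} rather than reprove — is the smooth refinement $k\leq4$. This requires LeBrun's curvature estimates (on the $L^2$ norm of the Weyl tensor, equivalently Yamabe-type invariants) to bound $b_2(B^4)$ in the range where the differentiable classification of the model manifolds is actually known, combined with the uniqueness of the smooth structure on $S^4$, $\overline{\bC P}^2$ and the small connected sums. Everything preceding this is soft; this final gauge-theoretic input is where the genuine obstacle lies.
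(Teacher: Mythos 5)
Your route is essentially the paper's: the result is quoted from LeBrun--Nayatani--Nitta, and the paper's accompanying remark sketches exactly your reduction ($b_2^+=0$ from the Weitzenb\"ock formula on $\Lambda^+$, passage to a cover, Donaldson diagonalisation plus Freedman, with the smooth refinement left to the reference). The Weitzenb\"ock step and the Donaldson/Freedman step are fine as you state them, and your remark that only the cover is classified unless one also rules out free finite actions is actually more careful than the paper's remark.

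There is, however, a genuine gap in the step where you assert that finiteness of $\pi_1$ is ``forced by positive scalar curvature''. It is not: $S^1\times S^3$ with the product metric is conformally flat (hence anti-self-dual in either orientation), compact, and has scalar curvature $6>0$, yet $\pi_1=\bZ$ and $b_1=1$, and it is homeomorphic neither to $S^4$ nor to any $\#k\,\overline{\bC P}^2$. So positive scalar curvature kills neither $b_1$ nor the fundamental group, and the proposition as literally stated is false; the paper's own remark, which asserts $b_1=0$ from scalar positivity, has the same defect. The theorem of \cite{lebrun} that both you and the paper invoke assumes positive \emph{Ricci} curvature, which yields finite $\pi_1$ by Bonnet--Myers and $b_1=0$ by Bochner; this is also the hypothesis announced in Lemma \ref{le:positive} (whose proof, note, only establishes positivity of the scalar curvature). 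To close the gap you must either strengthen the hypothesis to positive Ricci curvature, or supply an independent argument that the specific $B^4$ arising as the base of the HKT fibration has finite fundamental group; the soft part of your outline cannot produce this from scalar positivity alone.
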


\begin{remark}

As $B^4$ is simply connected the first Betti number $b_1$ vanishes -- in fact $b_1=0$ is a sufficient condition for the proof of the Proposition \ref{prop:lb} above and of the Theorem \ref{th:cla} below provided we consider the universal cover of $B^4$ instead of $B^4$. Positivity of the scalar curvature and  anti-self-duality  of $B^4$  imply that $b^2_+=0$ -- this relies on a Weitzenb\"ock formula to rule out harmonic self-dual 2-forms on $B^4$.   Then, the above theorem is a consequence of the results of Donaldson and Freedman.

\end{remark}

{\bf Proof\, of \, Theorem\, \ref{th:cla}}.

Let us first consider the case that either $K$ is abelian or $K$ has Lie algebra $\mathfrak{u}(1)\oplus \mathfrak{su}(2)$ and acts with the trivial representation $\mathrm{B}$ on the three complex structures on $M^8$. In either case, $\mathcal{F}_+=0$ and so $\mathcal{F}=\mathcal{F}_-$. Then  the closure condition of $H$, see (\ref{dconx}),  implies that
\begin{align}
\mathring{\nabla}^2 e^{2\Phi}&=-\frac{1}{2} (\mathcal{F})_o^2~.
\label{dconx3}
\end{align}
Thus, after either integrating the above expression  over $B^4$ or using the Hopf maximum principle, we conclude that $\Phi$ is constant and $\mathcal{F}=0$. The principal bundle is trivial and so $M^8$ is  isometric to the product $B^4\times K$, with  both $B^4$ and $K$ either  strong HKT manifolds or hyper-K\"ahler. As the compact 4-dimensional hyper-K\"ahler manifolds are locally isometric and tri-holomorphic to $\bR^4$ or to $K_3$ and the only compact, non-hyper-K\"ahler, 4-dimensional  HKT manifold is locally isometric and tri-holomorphic to $\bR\times S^3$, the result follows.

Next, suppose that $K=S(U(1)\times U(2))$ or $K=U(2)$  and the representation $\mathrm{B}$ is non-trivial.  This is the case dealt in Proposition \ref{prop:HKT}.
As $B^4$ is anti-self-dual manifold with positive scalar curvature, the Proposition \ref{prop:lb} implies that $B^4$ is homeomorphic to either $S^4$ or to  $B^4=\#_k\overline{\bC P}^2$. It is easy to see that $S^4$ does not satisfy the topological condition of the Theorem \ref{prop:HKT}. There are no non-trivial circle bundles over $S^4$, thus    $c^2_1[S^4]=0$, and moreover $\chi[S^4]=2$ and $\tau[S^4]=0$. For $B^4=\#_k\overline{\bC P}^2$, the topological condition of the Theorem \ref{th:HKT} reads
\be
3\sum^k_{p=1} n_p^2= 4-k~,
\label{inteqn}
\ee
where $c_1(L)$ has been expanded in a basis of $H^2(B^4, \bZ)$ with components $\{n_p\in \bZ: p=1,\dots, k\}$. We have also used that the intersection  form of $\#_k\overline{\bC P}^2$ is $-\bf{1}$.

The equation (\ref{inteqn})  has solutions for $k=1$ that gives $B^4=\overline{\bC P}^2$ and  $k=4$ that gives $B^4=\#_4\overline{\bC P}^2$. For the former solution,  $M^8$ is diffeomorphic to $SU(3)$, $M^8=SU(3)$. An HKT structure on $SU(3)$ has been described   in section \ref{sub:example}.

In the latter case, $c_1(L)[B^4]=0$ and as a consequence of (\ref{trcon}) and (\ref{ccct}), $c_2(E)=0$.  Thus, $M^8$ is a topologically trivial bundle over $B^4$. But, $M^8$ cannot be an HKT manifold. This is because it will be homeomorphic to a product $\#_4 \overline{\bC P}^2\times S^1\times S^3$. As a result, it will not admit a spin structure as $\#_4 \overline{\bC P}^2$ is not spin manifold. However, all HKT manifolds admit a spin structure.

\hfill $\square$

\begin{remark}
We have demonstrated in Theorem \ref{th:cla} that one of the possibilities that arise for an 8-dimensional,  compact, strong, HKT manifold is to be diffeomorphic to $SU(3)$.
As $SU(3)$ is a compact, simply connected, group manifold, it follows from the Borel–Hsiang–Shaneson–Wall theorem that it admits a unique differentiable structure -- it is that constructed using the group multiplication law. We also know that $SU(3)$ admits a left-invariant HKT structure that described in detail in section \ref{sub:example}. The question is whether there are additional HKT structures on $SU(3)$, especially HKT structures that are not left- or right-invariant.
It is a consequence of theorem 2.7 in \cite{poon} that if there is another HKT structure on $SU(3)$ it must be in the conformal class of the Fubini-Study metric.
\end{remark}

\begin{corollary}\label{coro:one}
If $SU(3)$ admits another HKT structure in addition to the invariant ones, the additional HKT structure must be induced by a anti-self-dual structure on $\overline{\bC P}^2$ that lies in the conformal class of the Fubini-Study metric.
\end{corollary}
\begin{proof}
This is a consequence of theorem 2.7 of Poon in \cite{poon}. In particular, it has been demonstrated that all compact simply connected anti-self-dual manifolds with signature $-1$ and positive scalar curvature are conformally equivalent to $\overline{\bC P}^2$ with the Fubini-Study metric.  As, we have shown that all HKT structures that obey the conditions of theorem \ref{th:cla} are induced by  anti-self-dual structures on $\overline{\bC P}^2$ with positive scalar curvature, it follows that they must be constructed  from  anti-self-dual structures conformal to the Fubini-Study metric.
\end{proof}

\begin{remark}
If $SU(3)$ is equipped with a left invariant HKT structure, it is possible to construct many other HKT manifolds by choosing a discrete subgroup $D$ of $SU(3)$ and considering the manifold $M^8=D\backslash SU(3)$.  As the metric $g$,  torsion $H$ and complex structures $I_r$  of $SU(3)$ are  left- invariant, they  will induce an HKT structure on $D\backslash SU(3)$.  It is also possible to consider $M^8=SU(3)/D$. The metric and torsion of $SU(3)$ will descent to $SU(3)/D$ as they are bi-invariant.  However, the complex structures $I_r$ must be invariant under $D$, i.e. the action of the discrete group must be tri-holomorphic.
\end{remark}

\begin{remark}
The results of the above Theorem \ref{th:cla} can  be  extended to include the case that the action of $\mathfrak{u}(1)\oplus \mathfrak{su}(2)$ can be lifted to a free action of $S(U(1)\times U(2))/\Gamma$ on $M^8$, where $\Gamma$ is a finite normal subgroup. In fact, $\Gamma=\bZ_k$ and  coset group is either $S(U(1)\times U(2))$ ($k$ odd) or $U(1)\times SO(3)$ ($k$ even). So essentially, there is only one more case to consider that of $U(1)\times SO(3)$ -- note that $U(1)\times SO(3)$ also admits invariant HKT structures associated with a bi-invariant metric and 3-form. This case has been explored in \cite{gphh}, where further details can be found.
\end{remark}

\vskip1cm
 \noindent {\it {Acknowledgements}}

 I thank Simon Salamon for discussions,  Leander Stecker for bringing to my attention reference \cite{AFF}  and Jeffrey Streets for discussions and for point out a mistake in an earlier version of the paper.

\setcounter{section}{0}

\appendix{Notation}\label{ap:aa}

Let $(M^n,g,H)$ be a Riemannian manifold with metric $g$ and 3-form $H$. The connection with torsion $H$, $\h\nabla$, expressed in an orthonormal co-frame $\{e^i: i=1,\dots, n\}$, is defined as
\be
\h\nabla_j X^i=\nabla_j X^i+\frac{1}{2} H^i_{jk} X^k~,
\ee
where $X$ is a vector field and  $\nabla$ is the Levi-Civita connection of $g$. In this co-frame, $g=\delta_{ij} e^i e^j$ and we use Einstein notation, where repeated indices are summed over. We also use the same symbol to denote the 3-form $H$ with all the other tensors that can be constructed from $H$ with the use of the metric $g$ and its inverse.  For example, $g^{-1} H$ with components  $H^i{}_{jk}= g^{im} H_{mjk}$. It will be apparent from the context which tensor $H$ denotes.

 On the other hand, if $X$ is a vector field then $X^\flat$ denotes the associated 1-form $X^\flat(Y)=g(Y,X)$. Similarly, if $\chi$ is an 1-form, the $\chi^\flat$ denotes the associated vector field, i.e. $\chi(Y)=g(Y, \chi^\flat)$.

In a coordinate basis $\{x^I: I=1,\dots, n\}$,  the convention for curvature $\h R$ of $\h\nabla$, as well as that  of other connections, is
\be
(\h\nabla_I \h\nabla_J-\h\nabla_J \h\nabla_I) X^K= \h R_{IJ}{}^K{}_L X^L- H^L{}_{IJ} \h\nabla_L X^K~.
\ee
In the co-frame basis,  $\h R_{ijkm}=\delta_{kn} \h R_{ij}{}^n{}_m$.  Furthermore, the Ricci tensor (curvature) is defined as
\be
\h{\mathrm{Ric}}_{ij}=\h R_{k i}{}^k{}_j~.
\ee
The wedge product of a $p$-form $\chi$ with a $q$-form $\psi$  is defined in the usual way.  In particular in the co-frame basis reads as
\begin{align}
\chi\wedge \psi&=\frac{1}{p!} \frac{1}{q!} \chi_{i_1\dots i_p} \psi_{j_1\dots j_q}\, e^{i_1}\wedge\cdots \wedge e^{i_p}\wedge e^{j_1}\wedge \cdots e^{j_q}
\cr
&
= \frac{1}{p!} \frac{1}{q!} \chi_{[i_1\dots i_p} \psi_{i_{p+1}\dots i_{p+q}]}\, e^{i_1}\wedge\cdots \wedge  e^{i_{p+q}}~.
\end{align}
In particular, for $p=1$ and $q=2$,
\be
3 \chi_{[i_1} \psi_{i_2i_3]}\equiv \chi_{i_1} \psi_{i_2i_3}+ \mathrm{cyclic} (i_1, i_2, i_3)~,
\ee
and similarly for $p=2$ and $q=1$.

The inner product of two $p$-form $\chi$ and $\psi$ is defined as
\be
(\chi, \psi)=\frac{1}{p!} \chi_{i_1\dots i_p} \psi_{j_1\dots j_p}\, \delta^{i_1 j_1} \cdots \delta^{i_p j_p}~.
\ee
We also use $(\chi, \chi)= \frac{1}{p!} \chi^2$ and $|\chi|=\sqrt{(\chi, \chi)}$.

The Lee form $\theta$ of a KT manifold $(M^{2k}, g, I, H)$ is given by
\be
\theta_i=\nabla^k \omega_{kj} I^j{}_i=-\frac{1}{2}  H_{kjm} \omega^{kj} I^m{}_i~,
\ee
where $\omega(X,Y)=g(X, I Y)$ is the Hermitian form.

\appendix{Existence of solutions to a non-linear elliptic equation}\label{ap:b}

It is intriguing that the equation (\ref{dconx}) always admits smooth  positive solutions for $e^{2\Phi}$.
After  re-arranging  this equation, with possibly a constant re-scaling of the metric $\mathring g$,  we find
\be
G(u)\equiv -\mathring{\nabla}^2 u(x)+u(x)^2-w(x)=0~,
\label{diffeqn}
\ee
where we have put the differential equation in an elliptic form,  $u(x)=e^{2\Phi}$ and $w(x)\geq 0$ with $w(x)\not=0$ for all $x\in B^4$.
In fact, the proof applies to all dimensions but we shall give it on a 4-dimensional manifold.

\begin{prop}\label{prop:dilaton}
If $B^4$ is compact  and $w$ smooth,  (\ref{diffeqn}) admits a smooth solution $u(x)$ with $u(x)>0$.
\end{prop}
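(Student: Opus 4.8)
The plan is to solve the semilinear elliptic PDE $G(u) = -\mathring{\nabla}^2 u + u^2 - w = 0$ on the compact manifold $B^4$ by the method of sub- and supersolutions (the monotone iteration scheme), which the paragraph preceding the proposition already advertises. Recall that $w \geq 0$, $w \not\equiv 0$, and $w$ is smooth. First I would exhibit an explicit \emph{supersolution}: set $\overline{u} \equiv C$ for a large positive constant $C$. Then $G(\overline u) = C^2 - w(x) \geq 0$ provided $C^2 \geq \max_{B^4} w$, so $\overline u$ is a supersolution. Next I would exhibit a \emph{subsolution}: the natural candidate is a small constant $\underline u \equiv \varepsilon > 0$, for which $G(\underline u) = \varepsilon^2 - w(x)$; this is $\leq 0$ at points where $w(x) \geq \varepsilon^2$, but since $w$ may vanish somewhere it need not be $\leq 0$ everywhere. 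To fix this I would instead take $\underline u = \varepsilon$ with $\varepsilon$ small and argue more carefully, or — cleaner — take $\underline u$ to be a suitable small multiple of a first eigenfunction: let $\mathring{\nabla}^2 \psi = -\mu \psi$ with $\mu > 0$ the (nonzero, since $B^4$ is connected and compact we use the first \emph{positive} eigenvalue, or rather we want) — actually the simplest robust choice is to note that $w \geq 0$, $w\not\equiv 0$, so by the strong maximum principle the linear problem $-\mathring\nabla^2 v = w$ has a smooth solution $v$, but $v$ need not be positive. The genuinely safe route: take $\underline u = \delta$ a constant with $0 < \delta^2 \leq \min\{w(x) : x \in \mathrm{supp\ of\ bulk}\}$ does not work globally either. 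So I would instead argue that \emph{any} sufficiently small constant $\delta>0$ fails only on the zero set of $w$, and replace the constant subsolution by observing that it suffices to have a subsolution $\underline u \leq \overline u$ with $G(\underline u)\leq 0$; taking $\underline u \equiv \delta$ with $\delta$ small enough that $\delta \leq C$ and $\delta^2 \leq \inf w$ only works if $\inf w > 0$. Since $w$ may have zeros, I would instead perturb: replace $w$ by $w_s = w + s$ for small $s>0$, solve $G_s(u_s)=0$ with constant subsolution $\underline u = \sqrt{s}$ and supersolution $\overline u = \sqrt{\max w + s}$, obtain $u_s \in [\sqrt s, \sqrt{\max w + s}]$, and then pass to the limit $s \to 0$ using elliptic Schauder estimates, which give a uniform $C^{2,\alpha}$ bound on $u_s$; the limit $u = \lim u_s$ solves $G(u) = 0$, is smooth by bootstrapping, and satisfies $u \geq 0$. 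Finally, to upgrade $u \geq 0$ to $u > 0$: at an interior minimum $x_0$ of $u$ with $u(x_0) = 0$ we have $-\mathring{\nabla}^2 u(x_0) \leq 0$, so $w(x_0) = \mathring{\nabla}^2 u (x_0) - u(x_0)^2 \geq 0$ forces $\mathring\nabla^2 u(x_0)\geq 0$, hence $=0$; the strong maximum principle (applied to $-\mathring\nabla^2 u + u \cdot u = w \geq 0$, viewing $u^2 = (u)\cdot u$ as a nonnegative zeroth-order coefficient times $u$) then shows $u \equiv 0$ on a neighborhood of $x_0$, which contradicts $w \not\equiv 0$ once propagated by connectedness. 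Therefore $u > 0$ everywhere.

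The main steps, in order, are: (1) construct a constant supersolution $\overline u = C$; (2) handle the subsolution, which because of the possible zeros of $w$ I expect to be the delicate point — either directly with a small constant after checking the maximum-principle comparison carefully, or via the regularization $w \mapsto w+s$ followed by a limiting argument; (3) run the monotone iteration $-\mathring\nabla^2 u_{k+1} + \Lambda u_{k+1} = \Lambda u_k - u_k^2 + w$ with $\Lambda$ chosen so that $t \mapsto \Lambda t - t^2$ is increasing on $[0, C]$ (i.e. $\Lambda \geq 2C$), obtaining a monotone sequence squeezed between sub- and supersolution, whose limit solves the equation; (4) bootstrap elliptic regularity (the right-hand side is a polynomial in a continuous function, so $u \in C^{0,\alpha} \Rightarrow u \in C^{2,\alpha} \Rightarrow \cdots \Rightarrow u \in C^\infty$); (5) apply the strong maximum principle to conclude strict positivity.

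The hard part will be Step (2): the subsolution. A naive constant subsolution $\delta > 0$ is not admissible wherever $w$ vanishes, and $w$ is only assumed smooth and nonnegative, not strictly positive. I would resolve this by the $s$-regularization described above, so that the limiting solution inherits the bound $u \geq 0$; then Step (5) promotes this to $u > 0$, using that $w \not\equiv 0$. One should check that the a priori bounds $\sqrt s \leq u_s \leq \sqrt{\max w + s}$ combined with interior Schauder estimates on the compact manifold give a subsequence $u_{s_j} \to u$ in $C^2$, which is routine. Everything else — the existence of solutions to the linear Poisson-type problems at each iteration step, the monotonicity of the iteration, and the elliptic bootstrap — is standard elliptic theory on a compact manifold and I would not belabor it.
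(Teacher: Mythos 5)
Your proof is correct and rests on the same monotone-iteration scheme with sub- and supersolutions that the paper uses; the only genuine divergence is in the subsolution, and it stems from a reading of the hypotheses. The paper's setup for (\ref{diffeqn}) stipulates $w(x)\geq 0$ with $w(x)\neq 0$ \emph{for all} $x\in B^4$, i.e.\ $w$ is pointwise positive, so by compactness $w_{\mathrm{min}}>0$ and the constant $a=\tfrac{1}{2}\sqrt{w_{\mathrm{min}}}$ is already an admissible subsolution ($G(a)=a^2-w\leq -\tfrac{3}{4}w_{\mathrm{min}}<0$); the paper then runs the iteration between $a$ and $b=\sqrt{w_{\mathrm{max}}}$ with $\lambda>2b$, exactly as in your step (3), and obtains $u\geq a>0$ for free, with no need for the strong maximum principle. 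You instead weakened the hypothesis to $w\geq 0$, $w\not\equiv 0$, which is what makes your step (2) delicate and forces the regularization $w\mapsto w+s$, the uniform Schauder bounds, the $s\to 0$ limit, and the final strong-maximum-principle argument for strict positivity. Those steps are all sound: the constant $\sqrt{s}$ is a valid subsolution for the regularized equation, the uniform $C^0$ bounds upgrade to uniform $C^{2,\alpha}$ bounds by $L^p$ theory plus Schauder, and the positivity argument via the nonnegative zeroth-order coefficient $c=u$ is the standard one (using connectedness of $B^4$). So you have in fact proved a marginally stronger statement than the one asserted; but under the hypothesis as stated in the paper the entire regularization detour can be deleted in favour of the constant subsolution.
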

\begin{proof}
 This equation can be solved using the approximation method based on supersolutions and subsolutions. For this, rewrite the differential equation $G(u)=0$ as
 \be
 L(u)=-u^2+w(x)+\lambda u~,~~~L\equiv -\nabla^2+\lambda
 \ee
 where $\lambda$ is a positive constant, $\lambda>0$, to be determined later.  Clearly if $u$ satisfies this equation, it also solves the original equation $G(u)=0$.
 As $B^4$ is compact, the function $w$ attaints a maximum and a minimum value $w_{\mathrm{max}}$ and $w_{\mathrm{min}}$, respectively. Let
 \be
 a=\frac{1}{ 2} \sqrt{w_{\mathrm{min}}}~,~~~b=\sqrt{w_{\mathrm{max}}}~.
 \ee
 These constants can be used  as a subsolution and as a supersolution, respectively, because
 \begin{align}
 &G(a)=a^2-w(x)\leq a^2-w_{\mathrm{min}}=-\frac{3}{4} w_{\mathrm{min}} \leq 0~,
 \cr
 &G(b)=b^2-w(x)\geq b^2-w_{\mathrm{max}}=0~,
 \end{align}
 and $a<b$. Thus, any possible solution resulting from the approximation will take values  in the interval $[a, b]$.

 Before, we begin the iteration, let us establish some properties of the operator $L$ and the function
 \be
 T(u)\equiv L^{-1} f(u,w)~,~~~f(u,w)=w-u^2+\lambda u~,
 \ee
 where $L^{-1}$ is the inverse of $L$. As $L$ can be inverted on any $C^{k,\alpha}$ H\"older space because $\lambda>0$ and the eigenvalues of $-\nabla^2$ are all non-positive, the function $T$ is well-defined on $C^{k,\alpha}$, $k\geq 2$. In particular, $T$ maps $C^0$ into $C^{2,\alpha}$. One also has that
 \begin{align}
  &L(T(a))=w-a^2+\lambda a\geq \lambda a= L(a)~,
  \cr
  & L(T(b))=w-b^2+\lambda b\leq \lambda b= L(b)~.
  \label{fite}
  \end{align}

 Next, let us choose $\lambda$ such that $f(u,w)$ is monotone increasing in the variable $u$ in the interval $[a,b]$.
  Taking the derivative with respect to $u$, one finds
 \be
 \partial_u f(u,w)=-2u+\lambda~.
 \ee
 Choosing $\lambda>2b$, $\partial_u f(u,w)>0$ and thus it is increasing.  Moreover for this choice of $\lambda$, $L$ remains invertible and satisfies the maximum principle as it is explained below.

 The operator $L$ is order preserving. This means that if $v_i$ are solutions to the equation $L(v_i)=h_i$ for the functions with $h_1\leq h_2$ pointwise on $B^4$, then $v_1\leq v_2$. This is a consequence of the strong maximum principle. Indeed, suppose that there are points that $v_1-v_2$ attains a positive value, i.e $v_1>v_2$ at those points. This implies that there is an $x_0\in B^4$ that $v_1-v_2$ attains a maximum value which is positive. Evaluating $L$ on $v_1-v_2$ at $x_0$, one has
 \be
 L(v_1-v_2)\vert_{x=x_0}\geq \lambda (v_1-v_2)(x_0)\geq 0~,
 \ee
 as $\nabla^2 (v_1-v_2)\vert_{x_0}\leq 0$.
 But from the hypothesis that $h_1\leq h_2$,  $L(v_1-v_2)=h_1-h_2\leq 0$. This is a contradiction and so $v_1\leq v_2$ at every point.

 As $L$ is order preserving, the same applies for $L^{-1}$.  As a result, the function $T(u)=L^{-1} f(u, w)$ is increasing as $u$ increases. This is because $f$ is also an increasing function of $u$.

 The map $T$ maps the interval $[a,b]$ to itself. Indeed, consider
 \be
 L(T(a)-a)=f(a, w)-L(a)=(w-a^2+\lambda a)-\lambda a=w-a^2\geq 0~.
 \ee
 From the monotonicity of $L$ proven above, $T(a)-a\geq 0$ and so $T(a)\geq a$.  Similarly,
 \be
 L(b-T(b))=L(b)-f(b,w)=\lambda b-(w-b^2+\lambda b)=-w+b^2\geq 0~.
 \ee
 Thus,  $b-T(b)\geq 0$ and so $T(b)\leq b$. Therefore, $T$ maps the closed convex set
 \be
 S\equiv \{u\in C^0(B^4): a\leq u\leq b\}
 \ee
 into itself and it is order-preserving on $S$.

 Now, we are ready to perform the iteration. This is  defined as
 \be
 L(u_{n+1})= f(u_n) \Longleftrightarrow u_{n+1}= T(u_n)~.
 \ee
For (\ref{fite}), it is clear that $u_1\geq u_0$ and progressing inductively using the properties of $T$, one can establish that   $a=u_0\leq u_1\leq u_2\cdots \leq b$. So the sequence $(u_n)$ is monotone nondecreasing and uniformly bounded in $C^0(B^4)$. As $T$ maps into $C^{2,\alpha}(B^4)$, the sequence actually lies in a bounded subset of $C^{2,\alpha}(B^4)$. By Arzel\'a-Ascoli and the compact embedding of $C^{2,\alpha}$ into $C^0$, $(u_n)$ converges to a limit $u\in C^0(B^4)$. Passing to the limit we obtain pointwise $L(u)=f u$. Elliptic regularity implies that actually $u$ is smooth provided that $w$ is smooth.

 As $0<a\leq u\leq b$, the solution $u$ must be strictly positive everywhere on $B^4$.  This means that there is a smooth function $\Phi$ such that $u(x)=e^{2\Phi(x)}$.

\end{proof}

 \vskip0.5cm
\noindent{\it Data availability statement}

All data generated or analyzed during this study are included in this published article.

\vskip0.5cm
\noindent{\it Conflict of interest statement}

The author has no relevant financial or non-financial interests to disclose.

\vskip0.5cm

 \bibliographystyle{unsrt}

\end{document}